\newtheorem{thm}{Theorem} [section]
\theoremstyle{definition}
\newtheorem{rem}[thm]{Remark}
\theoremstyle{plain}
\newtheorem{prop}[thm]{Proposition}
\newtheorem{lem}[thm]{Lemma}
\numberwithin{equation}{section}
\newcommand{\al}\alpha
\newcommand{\bbb}{\mathfrak{b}}
\newcommand{\C}{\mathbb C}
\newcommand{\D}{D(2|1;\zeta)}
\newcommand{\Dist}{\text{Dist}}
\newcommand{\DG}{\text{Dist}(G)}
\newcommand{\DGev}{\text{Dist}(G_{\bar 0})}
\newcommand{\de}{\delta}
\newcommand{\ep}{\epsilon}
\newcommand{\ve}{\epsilon}
\newcommand{\hf}{{\Small \frac12}}
\newcommand{\g}{\mathfrak{g}}
\newcommand{\ga}{\gamma}
\newcommand{\h}{\mathfrak{h}}
\newcommand{\ka}{\zeta}
\newcommand{\la}{\lambda}
\newcommand{\N}{\mathbb N}
\newcommand{\one}{{\ov 1}}
\newcommand{\osp}{\text{osp}}
\newcommand{\om}\omega
\newcommand{\ov}{\overline}
\newcommand{\OO}{\mathcal O}
\newcommand{\sll}{\mathfrak{sl}_2}
\newcommand{\Z}{\mathbb Z}
\newcommand{\oo}{{\ov 0}}
\newcommand{\bbc}{\mathbb{C}}
\newcommand{\Lie}{\mathsf{Lie}}
\newcommand{\res}{\mathsf{res}}
\newcommand{\ind}{{\mathsf{ind}}}
\newcommand{\Gev}{G_\oo}
\newcommand{\Hev}{H_\oo}
\newcommand{\Bev}{B_\oo}
\newcommand{\ch}{\mathsf{ch}}
\newcommand{\red}[1]{{\color{red}#1}}
\newcommand{\magenta}[1]{{\color{magenta}#1}}
\title{Modular representations of exceptional supergroups}
\author[Shun-Jen Cheng]{Shun-Jen Cheng}
\address{Institute of Mathematics, Academia Sinica, Taipei, Taiwan 10617} \email{chengsj@gate.sinica.edu.tw}
\author[Bin Shu]{Bin Shu}
\address{Department of mathematics, East China Normal University, Shanghai, China 200241}
\email{bshu@math.ecnu.edu.cn}
\author[Weiqiang Wang]{Weiqiang Wang}
\address{Department of Mathematics, University of Virginia, Charlottesville, VA 22904} \email{ww9c@virginia.edu}
\keywords{Exceptional supergroups, simple modules, odd reflections.}
\subjclass[2010]{Primary 20G05, 17B25}
\begin{document}

\maketitle

\begin{abstract}
We classify the simple modules of the exceptional algebraic supergroups over an algebraically closed field of prime characteristic.
\end{abstract}

\maketitle


 \section*{Introduction}

Among the simple Lie superalgebras over the complex field $\C$, the basic Lie superalgebras distinguish themselves by admitting a non-degenerate super-symmetric even bilinear form (see, e.g.,  \cite{CW12}), and they include 3 exceptional Lie superalgebras: $\D, G(3)$ and $F(3|1)$; cf. ~\cite{FK76}. The classification of finite-dimensional simple modules of complex simple Lie superalgebras was achieved by Kac \cite[Theorem 8]{Kac77}. Note that the simple highest weight modules whose highest weights are dominant integral (with respect to the even subalgebra) are not all finite dimensional. This is one of several aspects that super representation theory differs from the classical representation theory dramatically. This classification theorem of Kac can be reformulated as a classification for simple modules over the corresponding supergroups over $\C$.

There are algebraic supergroups associated to the basic Lie superalgebras, valid over an algebraically closed field $k$ of prime characteristic $p\neq 2$. A general theory of Chevalley supergroups was systematically developed by Fioresi and Gavarini \cite{FG12} (also see \cite{G14}).
In representation theory of algebraic supergroups $G$ over $k$, one of the basic questions is to classify the simple $G$-modules. For type $A$, the answer is immediate as it is the same as for the even subgroup $G_\oo$.
For type $Q$ such a classification was obtained in \cite{BK03}, and it has applications to classification of simple modules of spin symmetric groups over $k$.
For type $\frak{osp}$, the classification was obtained in \cite{SW08} in terms of the Mullineux involution by using odd reflections; also see Remark~\ref{rem:osp}.

\vspace{3mm}

The goal of this paper is to classify  the simple $G$-modules, when $G$ is a simply connected supergroup of exceptional type. We shall assume throughout the paper that $p>2$ for $\D$ and $p>3$ for $G(3)$ or $F(3|1)$
(except in \S\ref{sec:p=3}). Under these assumptions, their corresponding supergroups admit non-degenerate super-symmetric even bilinear forms. We treat $G(3)$ for $p=3$  in \S\ref{sec:p=3}.

\vspace{3mm}

Let us outline the approach of this paper. An equivalence of categories (\cite{SW08}; also cf. \cite{MS17}) reduces the classification of simple $G$-modules to the classification of the highest weights of finite-dimensional simple modules $L(\la) =L^{\bbb} (\la)$ over the distribution superalgebra $\DG$, where $\bbb$ is the standard Borel subalgebra. We then reduce the verification of finite-dimensionality of $L(\la)$ to verifying that $L(\la)$ is locally finite over its {\it even} distribution subalgebra. The local finiteness criterion for $L(\la)$  is finally established by means of odd reflections (see \cite{LSS}), and is based on the following observation which seems to be well known to experts (see \cite{Se11}):

{\em For every positive even root $\alpha$ in the standard positive system, either $\alpha/2$ (if it is a root) or $\alpha$ appears as a simple root in some simple system $\Pi'$ associated to some $\bbb'$, where $\bbb'$ is a Borel subalgebra obtained via a sequence of odd reflections from $\bbb$.}

For the exceptional Lie superalgebras, we make this observation explicit in this paper. We compute the highest weight $L^{\bbb '} (\la ')$ for all possible Borel subalgebras $\bbb'$ as mentioned above. Requiring $\la'$ to be dominant integral for all possible $\bbb'$ gives the local finiteness criterion for $L(\la)$.

Recently, an approach to obtain characters of projective and simple modules in the BGG category $\OO$ for the exceptional Lie superalgebras over $\C$ has been systematically developed; see \cite{CW17} for $\D$. Building on this and the current work, one may hope to better understand the characters of projective and simple modules of the exceptional supergroups over a field of prime characteristic in the future.

\vspace{3mm}
The organization of this paper is as follows.
In Section~\ref{sec:general}, we review the equivalence between the category of finite-dimensional modules over a supergroup $G$ and the category of finite-dimensional $(\DG, T)$-modules, where $T$ is a maximal torus of $G$. We develop a criterion for the finite-dimensionality of simple $\DG$-modules $L(\la)$ via odd reflections.
We also review the formula for the Euler characteristic, which implies that a $\DG$-module $L(\la)$, with $\la$ dominant integral and $\la+\rho$ is regular, is always finite dimensional.

In Section~\ref{sec:D}, we analyze the highest weight constraints given by odd reflections of a simple finite-dimensional $\DG$-module when $G$ is of type $\D$. Here $\D$ is a family depending on a parameter $\zeta \in k\backslash \{0,-1\}$. We then classify the simple $G$-modules in Theorem~\ref{thm:D}.

In Section~\ref{sec:G}, we analyze the highest weight constraints given by odd reflections of a simple finite-dimensional $\DG$-module when $G$ is of type $G(3)$. We then classify the simple $G$-modules in Theorem~\ref{thm:G3}.

In Section~\ref{sec:F}, we study the supergroup $G$ of type $F(3|1)$. When the highest weight $\la =a\om_1 +b\om_2 +c\om_3+ d\om_4$ with $a, b, c\in \N, d\ge 4$ is dominant integral, the weight $\la +\rho$ is regular and hence the Euler character formula implies that the $\DG$-module $L(\la)$ is finite dimensional. For $d\leq 3$, it is rather involved to analyze the highest weight changes under sequences of odd reflections and formulate sufficient and necessary conditions for $L(\la)$ to be finite dimensional. We finally classify the simple $G$-modules in Theorem~\ref{thm:F4}.

Finally we remark that, although in this article we deal with an algebraically closed field of positive characteristic, the results also make sense in characteristic zero and give the known classification in this case; cf.~\cite{Kac77, Ma14}.

 \vspace{.4cm}
 {\bf Acknowledgment.}
S.-J.C. is partially supported by a MoST and an Academia Sinica Investigator grant; B.S. is partially supported by the National Natural Science Foundation of China (Grant Nos. 11671138, 11771279) and Shanghai Key Laboratory of PMMP (No. 13dz2260400); W.W. is partially supported by an NSF grant DMS-1702254. We thank East China Normal University and Institute of Mathematics at Academia Sinica for hospitality and support.

 \section{Modular representations of algebraic supergroups}
\label{sec:general}

\subsection{Algebraic supergroups and $(\DG,T)$-$\mathfrak{mod}$}

Throughout the paper, the ground field $k$ is assumed to be algebraically closed and of characteristic $p>2$ (sometimes we will specify a stronger assumption $p>3$).

We shall review briefly some generalities on algebraic supergroups; cf. \cite{BK03, SW08, FG12, MS17}. An (affine) algebraic supergroup $G$ is an affine superscheme whose coordinate ring $k[G]$ is a Hopf superalgebra that is finitely generated as a $k$-algebra, and gives rise to a functor from the category of commutative $k$-superalgebras to the the category of groups. The underlying purely even group $\Gev$ is a closed subgroup of $G$ corresponding to the Hopf ideal generated by $k[G]_{\overline 1}$, and it is an algebraic group in the usual sense.
For an algebraic supergroup $G$, the distribution superalgebra $\Dist(G)$, which is by definition the restricted dual of the Hopf superalgebra $k[G]$, is a cocommutative Hopf superalgebra.

We denote by $G$-$\mathfrak{mod}$ the category of rational $G$-modules with (not necessarily homogeneous) $G$-homomorphisms. Note that a $G$-module is always {\em locally finite}, i.e., it is a sum of finite-dimensional $G$-modules. Given a closed subgroup $T$ of $G$, a $\DG$-module $M$ is called a $(\DG,T)$-module if $M$ has a structure of a $T$-module such that the $\Dist(T)$-module structure on $M$ induced from the actions of $\DG$ and of $T$ coincide. We denote by $(\DG,T)$-$\mathfrak{mod}$ the category of locally finite $(\DG,T)$-modules, and denote by $\DG$-$\mathfrak{mod}$ the category of locally finite $\DG$-modules. (We shall always take $T$ to be a maximal torus of $G$ when $G$ is of basic type.)

\subsection{Modules of basic algebraic supergroups}

Let $\g$ be a basic Lie superalgebra over $k$ \cite{CW12, FG12, G14}, including the three exceptional types:  $\D$,  $G(3)$, and $F(3|1)$. The non-degenerate bilinear form $(\cdot, \cdot)$ of $\g$ over $k$ exists when the characteristic $p$ of $k$ satisfies $p>2$ for type $\mathfrak{gl}, \mathfrak{osp}$ and $\D$, and $p>3$ for $G(3)$ and $F(3|1)$.

Algebraic supergroups over $k$ associated with basic (including exceptional) Lie superalgebras are constructed in analogy to Chevalley's construction of semisimple algebraic groups  (see \cite{FG12} and \cite{G14}); we shall use the same terminologies (such as basic type, exceptional type) to refer to Lie superalgebras and corresponding supergroups. We shall call $G$ simply connected if $\Gev$ is a simply connected algebraic group, taking advantage of \cite[Proposition~35]{Mas12}. Simple-connected supergroups of basic type exist, and we shall assume the exceptional supergroups in this paper to be simply connected.

The assumption on Chevalley bases in \cite[Theorem 2.8]{SW08} is satisfied for all algebraic supergroups of basic type, by the constructions in \cite{FG12, G14}. Hence we have the following.

\begin{prop}  \cite[Theorem 2.8]{SW08} \cite{MS17}
   \label{prop:equiv}
Let $G$ be an algebraic supergroup of basic type. Then there is a natural equivalence of categories between $G$-$\mathfrak{mod}$ and $(\DG,T)$-$\mathfrak{mod}$.
\end{prop}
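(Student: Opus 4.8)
The plan is to establish the equivalence $G\text{-}\mathfrak{mod} \simeq (\DG,T)\text{-}\mathfrak{mod}$ by the standard "differentiation" functor, following the purely even theory of Jantzen adapted to the super setting. First I would recall that, since $k[G]$ is a Hopf superalgebra finitely generated as a $k$-algebra, a rational $G$-module $M$ is precisely a right $k[G]$-comodule, and these are automatically locally finite. Given such a comodule, one differentiates the coaction to obtain an action of $\Dist(G)$, the restricted dual of $k[G]$; this is functorial and compatible with the restriction to the closed subgroup $T$, so the underlying $\Dist(T)$-structure obtained from $\DG$ agrees with the one coming from the $T$-module structure. This produces a functor $\Phi\colon G\text{-}\mathfrak{mod} \to (\DG,T)\text{-}\mathfrak{mod}$, and one checks it is exact and faithful.

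Next I would construct a quasi-inverse. The even subgroup $\Gev$ is an ordinary algebraic group, and by the classical result (Jantzen, \emph{Representations of Algebraic Groups}, I.7 and I.8, valid once $\Gev$ is infinitesimally generated, which holds here in characteristic $p>0$ for reductive $\Gev$) the category $\Gev\text{-}\mathfrak{mod}$ is equivalent to $(\DGev, T)\text{-}\mathfrak{mod}$. On top of this, the Chevalley construction of \cite{FG12, G14} gives the odd part: $\DG$ is generated by $\DGev$ together with the odd root vectors (equivalently, $G$ is "generated" by $\Gev$ and the odd one-parameter subgroups $x_\gamma$, $\gamma$ an odd root), and the odd root vectors act locally nilpotently on any object of $(\DG,T)\text{-}\mathfrak{mod}$ because they square into $\DGev$ and $T$-weight spaces are finite-dimensional. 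Hence, given a $(\DG,T)$-module $M$, one first integrates the $\DGev$-action to a $\Gev$-action using the classical equivalence, then integrates each odd root vector $e_\gamma$ to a homomorphism from the odd additive supergroup via the finite exponential $\xi \mapsto 1 + \xi e_\gamma$ (the series terminates since $e_\gamma^2 \in \DGev$ acts locally finitely and, after a suitable twist, nilpotently along appropriate weight directions). One then checks these integrated actions satisfy the defining relations of $k[G]$ — this is exactly where the hypothesis on Chevalley bases in \cite[Theorem~2.8]{SW08} is used — assembling to a genuine $G$-comodule structure, i.e. a functor $\Psi$.

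Finally I would verify $\Phi$ and $\Psi$ are mutually quasi-inverse: $\Psi\Phi \cong \mathrm{id}$ because a $G$-module is recovered from its $\Gev$-structure plus the odd exponentials, by the density of $\DG$ and the fact that $G$-comodule maps are the same as maps respecting all these integrated actions; and $\Phi\Psi \cong \mathrm{id}$ by construction. The functoriality on morphisms is immediate since both $\Phi$ and $\Psi$ act as the identity on underlying superspaces. The main obstacle is the quasi-inverse direction: one must show that the $\DG$-action on an abstract $(\DG,T)$-module genuinely integrates to a $G$-action rather than merely a formal or infinitesimal one. This requires (i) that $\DGev$-integration works, which is the classical fact that $\Gev$ is simply connected hence infinitesimally generated, and (ii) that the odd exponentials are well-defined finite expressions and interact correctly with the even group and with each other — precisely the content guaranteed by the Chevalley-basis hypotheses of \cite{FG12, G14} feeding into \cite[Theorem~2.8]{SW08}. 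Since the excerpt tells us these hypotheses are satisfied for all basic-type supergroups, the proof reduces to citing \cite[Theorem~2.8]{SW08} together with \cite{MS17}, and the work above is really the verification that its hypotheses apply in the exceptional cases.
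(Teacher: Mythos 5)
Your proposal ultimately reduces to exactly what the paper does: the proposition is quoted from \cite[Theorem 2.8]{SW08} and \cite{MS17}, the only work being the observation that the Chevalley-basis hypotheses of that theorem are satisfied for all basic-type supergroups by the constructions of \cite{FG12, G14}. Your expanded sketch of the differentiation/integration functors is a reasonable outline of the cited proof (one small quibble: the odd exponential $1+\xi e_\gamma$ terminates because the odd parameter satisfies $\xi^2=0$ in a commutative superalgebra, not because $e_\gamma^2$ acts nilpotently), but none of that detail is needed beyond the citation.
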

\noindent If we further assume $G$ is simply connected, then $(\DG,T)$-$\mathfrak{mod}$ in Proposition~\ref{prop:equiv} above can be replaced by  $\DG$-$\mathfrak{mod}$; cf. \cite[II.1.20]{Jan03}.

A supergroup $G$ of basic type can be constructed as a Chevalley supergroup through a Chevalley basis associated with a standard positive root system $\Phi^+$ as described in \cite[\S3.4 and \S3.5]{SW08},  \cite[3.3]{FG12} and \cite[\S3]{G14}. Therefore, we have a standard Borel subgroup $B$ corresponding to $\Phi^+$, which contains a maximal torus $T$. The distribution superalgebra $\DG$ contains $\Dist(B)$ as a
subalgebra. Set $\Lie(B) =\bbb$. Let $X(T)$ be the character group of $T$.
For $ \la  \in X(T)$, we denote the Verma module of $\DG$ by
\[
M(\la) = \DG \otimes_{\Dist(B)} k_\la,
\]
where $k_\la$ is the one-dimensional $\Dist(B)$-module of weight $\la$. 
The $\DG$-module $M(\la)$ has a unique simple quotient $L(\la)$, and furthermore the $\DG$-modules $L(\la)$ are non-isomorphic for distinct $\la \in X(T)$. By definition, $L(\la)$ is $X(T)$-graded and thus a $T$-module.
Denote by $X^+(T)$ the set of $G_\oo$-dominant integral weights (with respect to $\Phi^+$).

\begin{lem}  \cite[Lemma 4.1]{SW08}
 \label{lem:simple}
Every simple module in the category $(\DG,T)$-$\mathfrak{mod}$ is isomorphic to a finite-dimensional highest weight module $L(\la)$ for some $\la \in X^+(T)$, and vice versa.
\end{lem}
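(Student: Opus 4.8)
The plan is to prove the two inclusions separately; the substantive one is that a simple object of $(\DG,T)$-$\mathfrak{mod}$ is automatically a finite-dimensional highest weight module. So for the forward direction, let $M$ be a simple object of $(\DG,T)$-$\mathfrak{mod}$. First I would note that $M$ is finite dimensional: by definition of the category $M$ is a sum of finite-dimensional $(\DG,T)$-submodules, so it contains a nonzero such submodule, which by simplicity must equal $M$. Viewing $M$ as a $T$-module, I would then write $M = \bigoplus_{\mu \in X(T)} M_\mu$ and choose $\la$ to be \emph{maximal}, in the sense that $M_{\la + \alpha} = 0$ for every $\alpha \in \Phi^+$; such a $\la$ exists because $M$ has only finitely many weights while every positive root has positive height, so adding positive roots repeatedly must terminate. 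Fix $0 \neq v_+ \in M_\la$. Using a triangular PBW-type decomposition of $\DG$ adapted to the standard Borel $B$, whose positive part is generated by divided powers $e_\alpha^{(n)}$ of even positive root vectors and by the odd positive root vectors $e_\beta$---all of which strictly raise the $T$-weight---one sees that the augmentation ideal of this positive part annihilates $v_+$; since $\Dist(T)$ acts on $v_+$ through the character $\la$, the line $k v_+$ is a $\Dist(B)$-submodule isomorphic to $k_\la$. By the universal property of $M(\la) = \DG \otimes_{\Dist(B)} k_\la$ (Frobenius reciprocity) there is a nonzero $\DG$-homomorphism $M(\la) \to M$ sending $1 \otimes 1 \mapsto v_+$; by simplicity of $M$ it is surjective, so $M$ is the simple quotient of $M(\la)$, i.e. $M \cong L(\la)$.

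It then remains to see $\la \in X^+(T)$. For this I would pass to the even distribution subalgebra $\DGev \subseteq \DG$: the submodule $\DGev \cdot v_+ \subseteq M$ is finite dimensional and is generated by $v_+$, which is killed by the even positive root vectors and all their divided powers. Restricting to the copy of $\Dist(\mathrm{SL}_2)$ attached to each even simple root $\alpha$ and using that a finite-dimensional module so generated has nonnegative highest weight---equivalently, using that $\Gev$ is simply connected, so that finite-dimensional $\DGev$-modules are rational $\Gev$-modules with dominant highest weights (cf. \cite{Jan03})---one obtains $\langle \la, \alpha^\vee \rangle \ge 0$ for every even simple $\alpha$, that is, $\la \in X^+(T)$.

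For the converse, suppose $\la \in X^+(T)$ and $L(\la)$ is finite dimensional. Then $L(\la)$ is $X(T)$-graded, hence carries a $T$-action whose induced $\Dist(T)$-structure agrees with the weight-space decomposition coming from $\DG$; thus $L(\la) \in (\DG,T)$-$\mathfrak{mod}$. It is simple as a $\DG$-module and a fortiori as a $(\DG,T)$-module, and being finite dimensional it is locally finite, so it is a simple object of $(\DG,T)$-$\mathfrak{mod}$.

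The step I expect to be the main obstacle is the maximal-weight argument together with the claim that the augmentation ideal of the positive part of $\DG$ kills $v_+$: one must check that in the divided-power and super setting each relevant monomial in the $e_\alpha^{(n)}$ and $e_\beta$ genuinely raises the $T$-weight, and that an honest triangular decomposition of $\DG$ compatible with $B$ and $T$ is available. This is precisely where the Chevalley-basis data underlying Proposition~\ref{prop:equiv} enters; once it is in place, both the reduction to $L(\la)$ and the derivation of dominance are routine.
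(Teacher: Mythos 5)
The paper offers no proof of this lemma beyond the citation of \cite[Lemma 4.1]{SW08}, and your argument is precisely the standard highest-weight proof used there: finite-dimensionality from local finiteness plus simplicity, a maximal weight vector, Frobenius reciprocity from the Verma module $M(\la)$, and dominance of $\la$ via restriction to the $\mathrm{SL}_2$-copies attached to even simple roots. It is correct; the only point to phrase carefully is that $\la$ should be taken maximal in the dominance order (i.e.\ $M_\mu=0$ whenever $\mu-\la$ is a nonzero $\N$-combination of positive roots), which your ``adding positive roots must terminate'' argument in fact produces and which is exactly what is needed to annihilate the higher divided powers $e_\alpha^{(n)}v_+\in M_{\la+n\alpha}$, not just the single root vectors.
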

By Proposition~\ref{prop:equiv} and Lemma~\ref{lem:simple}, the classification of simple $G$-modules can be reformulated as the determination of the following set:
\begin{equation}
   \label{X+T}
   X^\dagger(T) =\big\{\la \in X^+(T) ~\big |~ L(\la) \text{ is finite dimensional} \big\}.
\end{equation}
For general supergroups of basic type, $X^\dagger(T)$ turns out to be a nontrivial proper subset of $X^+(T)$.

\begin{rem}
 \label{rem:osp}
For a supergroup $G$ of type $\mathfrak{spo}(2n|\ell)$, the subset $X^\dagger(T) \subset X^+(T)$ was determined explicitly in \cite{SW08}. Note the supergroup $G$ therein has even subgroup $\Gev = \text{Sp}_{2n} \times \text{SO}_\ell$ and hence is not simply connected. For a simply connected group of type $\mathfrak{spo}(2n|\ell)$, one would have additional simple modules $L(\la)$, where $\la \in X^+(T)$ is  of the form $\la \in \sum_{i<0} \Z\delta_i + \sum_{j>0} (\hf+\Z)\delta_j$ in the notation of \cite[\S3.3-3.4]{SW08};  This follows from Proposition~\ref{prop:EulerFinite} below.   
\end{rem}

We denote by $L'(\la)$ and $L''(\la)$ the highest weight $\DG$-modules with respect to positive systems $\Phi'^{+}$ and $\Phi''^{+}$, respectively.
 \begin{lem}  \cite[Lemma~4.2]{BKu03} \cite[Lemma 5.7]{SW08}
   \label{lem:oddref}
Let $\la \in X(T)$, and let $\beta$ be an odd isotropic root for $\g$. Suppose that $\Phi'^{+}$ and $\Phi''^{+}$ are two positive systems of $\g$ such that $\Phi''^{+} =\Phi'^{+} \cup \{-\beta\} \backslash \{\beta \}$. Then,
\[
L'' (\la) \cong \left \{
 \begin{array}{ll}
  L'(\la) & \text{ if } (\la, \beta) \equiv 0  \pmod p, \\
  L'(\la -\beta) & \text{ if } (\la, \beta) \not \equiv 0  \pmod p.
 \end{array}   \right.
\]
\end{lem}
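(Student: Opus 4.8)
The strategy is to work entirely inside the Verma module $M'(\la)$ over $\DG$ attached to the positive system $\Phi'^+$, and to locate the maximal vector of weight $\la-\beta$ (or $\la$) that generates the other Verma-type module. Write $\bbb' = \Lie(B')$ for the Borel corresponding to $\Phi'^+$ and $\bbb''$ for the one corresponding to $\Phi''^+ = \Phi'^+ \cup\{-\beta\}\setminus\{\beta\}$; these two Borels differ only in the isotropic odd root directions $\pm\beta$, so $\bbb'$ contains a root vector $e_\beta$ and $\bbb''$ contains $e_{-\beta}$. Let $v^+$ be a highest weight vector of $M'(\la)$. First I would observe that, since $\beta$ is odd isotropic, $(\beta,\beta)=0$ and hence the distribution superalgebra element $e_{-\beta}$ satisfies $e_{-\beta}^{[2]}=0$ in characteristic $p>2$, so the bracket computation $[e_\beta, e_{-\beta}] = h_\beta$ together with $[h_\beta, v^+] = (\la,\beta) v^+$ gives
\[
e_\beta \cdot (e_{-\beta} v^+) = [e_\beta, e_{-\beta}] v^+ \pm e_{-\beta} e_\beta v^+ = (\la,\beta)\, v^+,
\]
since $e_\beta v^+ = 0$ as $\beta \in \Phi'^+$. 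This is the one identity that drives everything.

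Next I would split into the two cases. If $(\la,\beta)\equiv 0 \pmod p$, the displayed identity shows $e_{-\beta} v^+$ is killed by $e_\beta$, and it is automatically killed by the root vectors for $\Phi'^+\setminus\{\beta\} = \Phi''^+\setminus\{-\beta\}$ up to positive root contributions one checks using that $-\beta$ plus a $\Phi''^+$-simple direction lands back in $\Phi''^+$ or is not a root; hence $e_{-\beta} v^+$ is a $\bbb''$-highest weight vector of weight $\la - \beta$. One shows it generates a submodule isomorphic to $M''(\la-\beta)$ by a dimension/character count: as a $\Dist(N''^-)$-module, $\Dist(G)$ is free, and $e_{-\beta}v^+$ together with the PBW monomials not involving $e_{-\beta}$ span $M'(\la)$, so $M'(\la)\cong M''(\la)$ as well, and comparing simple quotients via the linkage of weights $\la$ and $\la-\beta$ (they differ by the odd isotropic $\beta$, and the $\bbb''$-highest weight of $L'(\la)$ must be one of these) forces $L''(\la)\cong L'(\la)$. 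If instead $(\la,\beta)\not\equiv 0 \pmod p$, the vector $e_{-\beta}v^+$ is \emph{not} annihilated by $e_\beta$, but the rescaled vector $w := e_{-\beta} v^+$ still has the property that $v^+ = (\la,\beta)^{-1} e_\beta w$, so $w$ generates all of $M'(\la)$; and now $w$ \emph{is} a $\bbb''$-highest weight vector of weight $\la-\beta$ because the obstruction $e_\beta w$ is a \emph{lower} weight vector only in the $\Phi''$ order — here I must be careful that $\beta$ is now a \emph{negative} root for $\Phi''^+$, so $e_\beta \in \Dist(N''^-)$ and does not violate highest-weight-ness. Thus $M'(\la)$ is a highest weight module over $\bbb''$ of highest weight $\la-\beta$, its unique simple quotient is $L''(\la-\beta)$, and since $M'(\la)$ also surjects onto $L'(\la)$ with the same head, $L'(\la)\cong L''(\la-\beta)$.

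The main obstacle I anticipate is the bookkeeping that $e_{-\beta}v^+$ (resp.\ $v^+$) is genuinely a highest weight vector for \emph{all} of $\Phi''^+$, not just for the directions $\pm\beta$: one must verify that for every $\Phi''^+$-simple root $\gamma\neq -\beta$ the element $e_\gamma$ kills the candidate vector, which reduces to checking that $\gamma + (-\beta)$ behaves correctly (either is not a root, or the relevant structure constant vanishes, or it is handled by $e_\gamma v^+ = 0$ and the super-commutator). A secondary subtlety, specific to the distribution-algebra setting over characteristic $p$, is that one works with divided-power generators and must confirm $e_{-\beta}^{[n]}v^+ = 0$ for $n\geq 2$ so that the module really is "generated in one odd step"; this is exactly where isotropy $(\beta,\beta)=0$ and $p>2$ are used. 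Once these are in place, the isomorphism of simple quotients follows formally, so I would present the highest-weight-vector verification carefully and treat the quotient identification as routine, citing the cited references \cite{BKu03, SW08} for the analogous Lie-superalgebra argument which transfers verbatim given the Chevalley basis assumptions already invoked in Proposition~\ref{prop:equiv}.
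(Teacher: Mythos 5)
The paper offers no proof of this lemma---it is imported verbatim from \cite[Lemma~4.2]{BKu03} and \cite[Lemma 5.7]{SW08}---so the comparison is with the standard argument in those sources, and your proposal follows exactly that route: apply $e_{-\beta}$ to a highest weight vector, use isotropy to get $e_{-\beta}^2=0$ and $e_{\beta}e_{-\beta}v^+=(\la,\beta)v^+$, and check that the remaining simple root vectors of $\Pi''$ kill the candidate vector because $\beta$ is simple in $\Phi'^{+}$ (so $\alpha-\beta$ is never a root for $\alpha\in\Pi'\setminus\{\beta\}$). Your case $(\la,\beta)\not\equiv 0$ is sound. In the case $(\la,\beta)\equiv 0$, however, the intermediate claim $M'(\la)\cong M''(\la)$ is false: comparing characters, $\ch M''(\la)=e^{\beta}\,\ch M'(\la)$, since the odd factor $(1+e^{-\beta})$ is replaced by $(1+e^{\beta})$; what is true is $\ch M''(\la-\beta)=\ch M'(\la)$. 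Moreover, you never actually rule out that the $\bbb''$-highest weight of $L'(\la)$ is $\la-\beta$ rather than $\la$. The clean repair is to argue inside the simple module $L'(\la)$: when $(\la,\beta)\equiv 0$ the image of $e_{-\beta}v^+$ is annihilated by $e_{\beta}$ \emph{and} by every $e_{\alpha}$, $\alpha\in\Pi'$, hence is a $\bbb'$-singular vector of weight $\la-\beta\neq\la$ and must vanish in a simple highest weight module; therefore $v^+$ itself is $\bbb''$-singular and $L'(\la)\cong L''(\la)$.

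A second point concerns the direction of the isomorphism in the nonvanishing case. Your argument correctly yields $L'(\la)\cong L''(\la-\beta)$, equivalently $L''(\la)\cong L'(\la+\beta)$, which is not literally the displayed formula $L''(\la)\cong L'(\la-\beta)$. A check on $\mathfrak{gl}(1|1)$, or against the way the lemma is invoked in the highest weight computations of Sections~\ref{sec:D}--\ref{sec:F} (where one always passes from the old highest weight $\la^i$ to the new one $\la^{i+1}=\la^i-\beta$ when the pairing is nonzero), confirms that the intended statement is the one you proved and that the printed formula has the roles of $L'$ and $L''$ transposed relative to \cite[Lemma 5.7]{SW08}. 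So your proof matches the lemma as it is used, not as it is literally stated; this discrepancy should be flagged explicitly rather than absorbed. The remaining ingredients---the absence of higher divided powers $e_{-\beta}^{[n]}$ for an isotropic odd root, and the identification of the unique simple quotient of a module that is a highest weight module for both Borels (legitimate because the $\la$- and $(\la-\beta)$-weight spaces are one dimensional, so the sum of all proper submodules is proper)---are fine.
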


We shall say $\Phi''^{+}$ is obtained from $\Phi'^{+}$ by an odd reflection in the setup of Lemma~\ref{lem:oddref}. Often we shall abbreviate $a\equiv b\pmod p$ as $a\equiv b$ later on. In the coming sections dealing with exceptional supergroups, we shall be very explicit about the (positive) root systems and odd reflections.

\begin{lem}
  \label{lem:rational}
Let $L=L(\la)$, for $\la \in X^+(T)$.  Suppose that $L$ is isomorphic to $L^{\bbb'}(\la')$ with $\la' \in X^+(T)$, for every Borel subalgebra $\bbb'$ that is obtained from $\bbb$ by a sequence of odd reflections. Then $L$ is locally finite as a $\DGev$-module, i.e., it is a rational $G_\oo$-module.
\end{lem}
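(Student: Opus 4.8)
The plan is to reduce local finiteness over $\DGev$ to a statement about weights, using the fact stated in the introduction: for every positive even root $\alpha$ of $\g$ in the standard positive system $\Phi^+$, either $\alpha/2$ (when it is a root) or $\alpha$ itself appears as a simple root in some simple system $\Pi'$ attached to a Borel subalgebra $\bbb'$ obtained from $\bbb$ by a sequence of odd reflections. First I would recall that $\DGev$ is generated by $\Dist(T)$ together with the divided-power elements $e_\alpha^{(r)}/r!$ and $f_\alpha^{(r)}/r!$ for $\alpha$ running over positive even roots (up to the $\alpha$ versus $\alpha/2$ issue, which is exactly the content of the observation above). So to show $L=L(\la)$ is locally finite over $\DGev$ it suffices to show that each such even root $\sll$-triple acts locally finitely on $L$; and for an $\sll$-triple in characteristic $p$, local finiteness of the divided-power action is equivalent to the weights along that root string being bounded, which in turn follows from $\la''$ being dominant integral with respect to the relevant $\alpha$.

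The key steps, in order, would be: (1) Fix a positive even root $\alpha$. By the observation, choose a sequence of odd reflections carrying $\bbb$ to $\bbb'$ so that $\beta:=\alpha$ or $\beta:=\alpha/2$ is simple in $\Pi'$. (2) Apply the hypothesis: $L\cong L^{\bbb'}(\la')$ for some $\la'\in X^+(T)$, i.e. $\la'$ is $G_\oo$-dominant integral with respect to $\Phi'^+$; in particular $\langle \la',\beta^\vee\rangle\in\N$. (3) Since $\beta$ (or $2\beta=\alpha$) is a simple even root for $\bbb'$, the corresponding $\sll$-triple $(e_\beta,h_\beta,f_\beta)$ sits inside $\Dist(B')\cap\DGev$, and the highest weight vector $v_{\la'}$ of $L^{\bbb'}(\la')$ is a highest weight vector for this $\sll$; dominance $\langle\la',\beta^\vee\rangle\in\N$ forces the $\Dist(\sll)$-submodule generated by $v_{\la'}$ to be finite dimensional. (4) Since $L$ is generated over $\DG$ by $v_{\la'}$ and $\DG=\DGev\cdot\Dist(\n^+_{\bar 1})$ with the odd part finite dimensional, one propagates finiteness of the $e_\beta,f_\beta$-action from $v_{\la'}$ to all of $L$: concretely, the $e_\alpha,f_\alpha$-weight strings through any weight of $L$ have length bounded in terms of $\langle\la',\beta^\vee\rangle$ plus the (finite) spread of odd weights, so each divided power $f_\alpha^{(r)}$ eventually kills every vector. (5) Ranging over all positive even roots $\alpha$, we conclude every generator of $\DGev$ acts locally finitely, hence $L$ is a sum of finite-dimensional $\DGev$-modules, i.e. a rational $G_\oo$-module by Proposition~\ref{prop:equiv} applied to $G_\oo$.

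The main obstacle I expect is step (4): passing from local finiteness at the single highest weight vector $v_{\la'}$ (which is cheap, being just $\sll$-representation theory) to local finiteness on the whole module. The clean way is to bound, uniformly over all weights $\mu$ of $L$, the length of the $\alpha$-string $\{\mu+j\alpha\}$ of weights of $L$; this requires knowing that $L=L^{\bbb'}(\la')$ has its weights contained in $\la'-\N\Pi'$ and that only finitely many odd roots contribute, so that the weight diagram is "bounded in the $\alpha$-direction" once one knows the highest weight pairing is a nonnegative integer. One must also be slightly careful in the $\alpha/2$ case to use that the full even root $\alpha=2\beta$ (not merely $\beta$, which may be odd) is what generates the relevant part of $\DGev$; but since $\beta$ simple for $\bbb'$ forces $\alpha=2\beta$ to behave like a simple $\sll$-root for the even subgroup, the same argument applies. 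Everything else is bookkeeping with divided powers and the standard equivalence of Proposition~\ref{prop:equiv}.
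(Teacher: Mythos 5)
Your proposal is correct and follows essentially the same route as the paper's own (much terser) proof: both rest on the observation that every positive even root $\alpha$, or $\alpha/2$ when that is a root, occurs as a simple root of some $\Pi'$ reachable by odd reflections, so that dominance of $\la'$ forces each $\Dist(SL_{2,\alpha})$ to act locally finitely on $L$, whence $L$ is a rational $\Gev$-module. The propagation issue you flag in step (4) is real but standard (local nilpotence of $\mathrm{ad}\,f_\beta$ on $\DG$ for $\beta$ simple in $\Pi'$), and the paper leaves it implicit as well.
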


\begin{proof}
We recall the following observation (cf., e.g., \cite{Se11, Ma14}):

{\em For every positive even root $\alpha$ in $\Phi_\oo^+$, either $\alpha/2$ (if it is a root) or $\alpha$ appears as a simple root in some simple system $\Pi'$ associated to $\bbb'$.}


Denote by $SL_{2,\al}$ the root subgroup of $G$ associated to $\alpha$. Then by the assumption of the lemma, $\Dist(SL_{2,\al})$ acts on $L$ locally finitely (i.e., $L$ is a rational $SL_{2,\al}$-module). It follows that $L$ is a rational $G_\oo$-module, or equivalently, $L$ is locally finite as a $\DGev$-module by Proposition~\ref{prop:equiv}.
\end{proof}

\begin{lem}
  \label{lem:lf=fd}
If a finitely generated $\DG$-module $M$ is locally finite as a $\DGev$-module, then $M$ is finite dimensional.
\end{lem}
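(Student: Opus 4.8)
The plan is to exploit the PBW-type decomposition of the distribution superalgebra. Write $\DG \cong U^-_{\bar 1} \otimes \DGev \otimes U^+_{\bar 1}$ (as super vector spaces), where $U^\pm_{\bar 1}$ is the subalgebra of $\DG$ generated by the divided powers of the odd root vectors for positive (resp. negative) odd roots; since each odd root vector $e$ satisfies $e^2 = \tfrac12[e,e] \in \DGev$ and higher divided powers $e^{(r)}$ for $r\ge 2$ can be rewritten via products of $e$, $e^2$ (an even element) and binomial coefficients, the spaces $U^\pm_{\bar 1}$ are in fact \emph{finite dimensional}: each is spanned by ordered products $\prod e_i$ over the positive (resp. negative) odd roots, with each factor appearing at most once. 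This is the structural input that makes the argument work, and it is where I would be most careful — one must check that, in the Chevalley supergroup setting of \cite{FG12, G14}, the odd part of $\DG$ is generated over $\DGev$ by these finitely many square-zero-modulo-$\DGev$ generators, so that $\DG$ is a \emph{finite} (left or right) $\DGev$-module on each side; equivalently $\DG = \DGev \cdot U^+_{\bar 1}\cdot U^-_{\bar 1}$ with $U^\pm_{\bar 1}$ finite dimensional. I expect this verification — making the PBW/triangular decomposition of $\DG$ into $\DGev$ and a finite-dimensional odd complement precise — to be the main obstacle; once it is in hand the rest is formal.

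Granting this, here is the sequence of steps. First, pick a finite generating set $m_1,\dots,m_s$ for $M$ as a $\DG$-module. Second, form the finite-dimensional subspace $N \subseteq M$ spanned by $\{u\cdot m_i : u \in U, \ 1\le i\le s\}$, where $U$ is a chosen finite-dimensional spanning set for the odd complement of $\DGev$ in $\DG$ (e.g.\ the span of ordered products of distinct odd root vectors together with their products with the finitely many relevant even ``$e^2$'' elements — in any case a fixed finite-dimensional subspace $U$ with $\DG = \DGev \cdot U$). Third, observe that $M = \DGev \cdot N$: indeed every element of $M$ is a $\DG$-combination of the $m_i$, and using $\DG = \DGev\cdot U$ we may push all of $\DGev$ to the left, so $M = \DGev\cdot(U\cdot\{m_i\}) = \DGev\cdot N$. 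Thus $M$ is a \emph{finitely generated} $\DGev$-module, generated by a basis of the finite-dimensional space $N$. Fourth, by hypothesis $M$ is locally finite over $\DGev$, so each of the finitely many generators spanning $N$ lies in a finite-dimensional $\DGev$-submodule; the sum $M'$ of these finitely many finite-dimensional submodules is then a finite-dimensional $\DGev$-submodule containing $N$. Fifth, since $M'$ is a $\DGev$-submodule containing the $\DGev$-generators of $M$, we get $M = \DGev\cdot M' \subseteq M'$, whence $M = M'$ is finite dimensional.

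The logical skeleton is therefore: $M$ finitely generated over $\DG$ $+$ $\DG$ finitely generated (indeed finite) as a $\DGev$-module $\Longrightarrow$ $M$ finitely generated over $\DGev$; then finitely generated $+$ locally finite over $\DGev$ $\Longrightarrow$ finite dimensional. The only nontrivial ingredient beyond bookkeeping is the finiteness of $\DG$ over $\DGev$, and for that I would cite the explicit Chevalley-basis construction of $\DG$ in \cite{FG12, G14} (as already invoked before Proposition~\ref{prop:equiv}), noting that the odd root vectors are finite in number and their divided powers of order $\ge 2$ reduce to products of lower-order terms times even distributions. No characteristic hypothesis beyond $p>2$ is needed here; the argument is uniform.
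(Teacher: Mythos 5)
Your proposal is correct and follows essentially the same route as the paper: the paper's proof simply asserts that $\DG$ is finitely generated over $\DGev$ (which your PBW discussion justifies in detail, the odd root vectors contributing only a finite-dimensional exterior-algebra-type factor), concludes that $M$ is finitely generated over $\DGev$, and then combines this with local finiteness exactly as in your final steps.
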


\begin{proof}
Since $\DG$ is finitely generated over the algebra $\DGev$,  as a $\DGev$-module $M$ is also finitely generated. Together with the locally finiteness assumption, this implies that  $M$ is finite dimensional.
\end{proof}
The combination of Proposition~\ref{prop:equiv}, Lemmas~\ref{lem:oddref}, \ref{lem:rational} and \ref{lem:lf=fd} provides us with an effective approach of classifying simple $G$-modules. Indeed, the problem of determining the finite-dimensional irreducible modules is thus reduced to determining the weights that remain to be $G_{\bar 0}$-dominant integral when transformed to highest weights with respect to any Borel (with fixed even part).

\subsection{Euler characteristic}

Let $H$ be a closed subgroup of an algebraic supergroup $G$
such that the quotient superscheme $G/H$ is locally decomposable (cf. \cite[the paragraph above Lemma~2.1]{B06}) and $G_\oo/H_\oo$ is projective; that is, the superscheme $X=G/H$ satisfies the assumptions (Q5)-(Q6) in \cite[\S 2]{B06}.

We refer to \cite[II.2]{Jan03} and \cite[\S6]{BK03} for the precise definitions for induction and restriction functors below. Below, for a superspace $M$, we shall use $S(M)$ to denote the corresponding supersymmetric algebra.

\begin{lem} (\cite[Corollary 2.8]{B06})
  \label{lem:Euler}
For any finite-dimensional $H$-module $M$, we have
\[
\sum_{i\geq 0} (-1)^i [\res^G_{\Gev} R^i \ind^G_H M] =\sum_{i\geq 0}(-1)^i [R^i\ind^{\Gev}_{\Hev}S\big(\left(\Lie G \slash\Lie H\right)^*_{\bar 1}\big)\otimes M],
\]
where the equality is understood in the Grothendieck group of $\Gev$-modules.
\end{lem}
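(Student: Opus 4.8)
The plan is to deduce this from the general Euler characteristic formula for the quotient superscheme $X = G/H$, treating the higher direct images of the structure sheaf in a suitably twisted form. The statement of Lemma~\ref{lem:Euler} is precisely the specialization (to the trivial line bundle twisted by $M$) of the general machinery developed in \cite{B06}; so rather than reprove that machinery, I would set up the hypotheses and then invoke \cite[Corollary 2.8]{B06} directly, since the paper already grants us that $X=G/H$ satisfies (Q5)--(Q6).

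First I would record the standard identification of the induced module $\ind^G_H M$ with the space of global sections of the associated sheaf $\mathcal L(M)$ on $X$, and of $R^i\ind^G_H M$ with $H^i(X, \mathcal L(M))$; this is \cite[II.2]{Jan03} in the even case and its super-analogue in \cite[\S6]{BK03}, \cite[\S2]{B06}. Next I would use the fact that $X=G/H$ is a superscheme whose underlying even scheme is $X_\oo = \Gev/\Hev$, and whose structure sheaf, as a sheaf on $X_\oo$, is $\OO_{X_\oo} \otimes \Lambda\big((\Lie G/\Lie H)^*_\one\big)$ — more precisely, the sheaf associated to the $\Hev$-module $S\big((\Lie G/\Lie H)^*_\one\big)$, this being the "odd part" contribution. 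Restricting a $G$-module to $\Gev$ corresponds sheaf-theoretically to pushing forward along $X \to X_\oo$, which is affine, so $R^i(\text{pushforward})$ vanishes for $i>0$ and the Leray spectral sequence degenerates: $H^i(X,\mathcal L(M)) \cong H^i\big(X_\oo,\, \mathcal L_{\Gev}(S((\Lie G/\Lie H)^*_\one)\otimes M)\big)$ as $\Gev$-modules. Re-expressing $H^i(X_\oo,-)$ as $R^i\ind^{\Gev}_{\Hev}(-)$ then gives, degree by degree, an isomorphism of $\Gev$-modules; passing to the alternating sum in the Grothendieck group yields the claimed identity. The only subtlety is bookkeeping the $\bbz/2$-grading (parity) on both sides, which matches because the supersymmetric algebra $S\big((\Lie G/\Lie H)^*_\one\big)$ carries the grading inherited from the exterior algebra on an odd space.

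I would present this as two moves: (1) reduce the left-hand side to cohomology of the total space $X$ with coefficients in $\mathcal L(M)$, and reduce the right-hand side to cohomology of $X_\oo$ with coefficients in the pushforward sheaf; (2) observe that the projection $X\to X_\oo$ is an affine morphism with $\pi_*\OO_X$ the sheaf of the $\Hev$-module $S\big((\Lie G/\Lie H)^*_\one\big)$, so that the projection formula gives $\pi_*\mathcal L(M) = \mathcal L_{\Gev}\big(S((\Lie G/\Lie H)^*_\one)\otimes M\big)$ and the higher $R^i\pi_*$ vanish. Then the alternating sum identity is immediate. The main obstacle is purely expository: making the comparison of the super-induction functor with sheaf cohomology on the quotient superscheme precise enough that the affineness of $\pi$ and the projection formula apply verbatim in the super setting. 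Since \cite{B06} has already carried this out — and the excerpt explicitly cites \cite[Corollary 2.8]{B06} for exactly this statement — the cleanest proof is to verify that our $(G,H)$ meets hypotheses (Q5)--(Q6) (guaranteed by the hypothesis that $G_\oo/H_\oo$ is projective and $G/H$ is locally decomposable) and then quote that corollary; I would write the proof in that form, with the sheaf-theoretic sketch above included as the conceptual content.

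\begin{proof}
By hypothesis the superscheme $X = G/H$ is locally decomposable and $G_\oo/H_\oo$ is projective, so $X$ satisfies the assumptions (Q5)--(Q6) of \cite[\S2]{B06}. The asserted identity is then exactly the content of \cite[Corollary 2.8]{B06} applied to the $H$-module $M$. For the reader's convenience we recall the mechanism: under the standard identification of $R^i\ind^G_H M$ with the sheaf cohomology $H^i(X, \mathcal L(M))$ (see \cite[II.2]{Jan03}, \cite[\S6]{BK03}), one uses that the canonical projection $\pi\colon X \to X_\oo := \Gev/\Hev$ is an affine morphism with $\pi_* \OO_X$ the $\OO_{X_\oo}$-algebra associated to the $\Hev$-module $S\big((\Lie G/\Lie H)^*_{\bar 1}\big)$. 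Hence $R^i\pi_* = 0$ for $i>0$, and the projection formula gives $\pi_*\mathcal L(M) \cong \mathcal L_{\Gev}\big(S((\Lie G/\Lie H)^*_{\bar 1}) \otimes M\big)$ as $\Gev$-equivariant sheaves on $X_\oo$. The Leray spectral sequence for $\pi$ therefore degenerates and yields, for every $i\geq 0$, an isomorphism of $\Gev$-modules
\[
\res^G_{\Gev} R^i\ind^G_H M \;\cong\; R^i\ind^{\Gev}_{\Hev}\Big(S\big((\Lie G/\Lie H)^*_{\bar 1}\big)\otimes M\Big).
\]
Taking the alternating sum over $i$ in the Grothendieck group of $\Gev$-modules gives the stated equality.
\end{proof}
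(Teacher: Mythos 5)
Your core move---verify that $X=G/H$ satisfies (Q5)--(Q6) and then quote \cite[Corollary 2.8]{B06}---is exactly what the paper does: the lemma is stated there with that citation and no further proof, and the hypotheses (local decomposability of $G/H$, projectivity of $G_\oo/H_\oo$) are set up in the preceding paragraph precisely so that Brundan's corollary applies. On that level your proposal is fine.

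However, the ``mechanism'' you include overclaims, and you should not present it as a proof sketch. You assert a degree-by-degree isomorphism of $\Gev$-modules
\[
\res^G_{\Gev} R^i\ind^G_H M \;\cong\; R^i\ind^{\Gev}_{\Hev}\Big(S\big((\Lie G/\Lie H)^*_{\bar 1}\big)\otimes M\Big),
\]
derived from a projection formula $\pi_*\mathcal L(M)\cong \mathcal L_{\Gev}\big(S((\Lie G/\Lie H)^*_{\bar 1})\otimes M\big)$. This is not what \cite{B06} proves and it is false in general. Local decomposability gives the identification $\mathcal O_X\cong \mathcal O_{X_\oo}\otimes \Lambda\big((\Lie G/\Lie H)^*_{\bar 1}\big)$ only locally; globally and equivariantly one only gets a canonical filtration of $\pi_*\mathcal L(M)$ whose \emph{associated graded} is the sheaf attached to $S\big((\Lie G/\Lie H)^*_{\bar 1}\big)\otimes M$. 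Since Euler characteristics are additive on filtrations (and the long exact sequences in cohomology contribute with alternating signs), only the identity in the Grothendieck group survives --- which is exactly why the lemma is stated as an equality of classes $[\,\cdot\,]$ rather than as an isomorphism in each cohomological degree. If your degree-wise isomorphism held, the statement would be strictly stronger than what is claimed (and, e.g., would force $\res^G_{\Gev}H^0(\la)$ to be a direct sum of even induced modules, which already fails for standard Borel subgroups). So either drop the sketch and rely on the citation alone, or replace the projection-formula step by the correct filtration argument and pass to the Grothendieck group at that point.
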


Now we take $G$ to be an algebraic supergroup of basic type, $H=B^-$ to be the opposite Borel subgroup.
Since $G_\oo/B^-_\oo$ is projective and $G/B^-$ is locally decomposable (cf. \cite{MZ17} and \cite{Z18}), Lemma~\ref{lem:Euler} is applicable.
For $M=k_\lambda$, we define $H^i(\lambda):=R^i \ind^G_{B^-}(k_\lambda)$ and then the Euler characteristic
\[
\chi(\la):= \sum_{i\geq 0} (-1)^i\; \ch \, H^i(\lambda).
\]
By Lemma~\ref{lem:Euler} we have the following formula for the Euler characteristic
\begin{align}
 \label{eq:Euler}
\chi (\la)
 =  \sum_{i\geq 0}(-1)^i\; \ch \, R^i \ind^{\Gev}_{\Bev}S\big(\left(\g \slash \bbb^-\right)^*_{\bar 1}\big)\otimes k_\lambda,
 \end{align}
where $\bbb^-$ is the opposite Borel subalgebra. Since the Euler characteristic is additive on short exact sequences, it suffices to determine the Euler characteristic on the composition factors of the $\Bev$-module $S\big((\g \slash \bbb^-)^*_{\bar 1}\big)\otimes k_\lambda$. Recall that the supersymmetric algebra of a purely odd space is the exterior algebra in the usual sense. Let $W$ be the Weyl group of $\g$. Since $\Pi_{\beta\in \Phi^+_{\bar 1}}(e^{\beta\over 2}+e^{-\beta\over 2})$ is $W$-invariant, it follows by \eqref{eq:Euler} and Lemma~\ref{lem:Euler} that
\begin{align*}
\chi(\la)
 =& {\sum_{w\in W}(-1)^{\ell(w)}w(e^{(\lambda+\rho_{\bar 0})}(\Pi_{\beta\in \Phi^+_{\bar 1}}(1+e^{-\beta}))) \over \Pi_{\alpha\in \Phi^+_{\bar 0}}(e^{\alpha\over 2}-e^{-\alpha\over 2})}
 \cr
 =& {\sum_{w\in W}(-1)^{\ell(w)}w(e^{(\lambda+\rho)}(\Pi_{\beta\in \Phi^+_{\bar 1}}(e^{\beta\over 2}+e^{-\beta\over 2}))) \over \Pi_{\alpha\in \Phi^+_{\bar 0}}(e^{\alpha\over 2}-e^{-\alpha\over 2})}
 \cr
  =&{\Pi_{\beta\in \Phi^+_{\bar 1}}(e^{\beta\over 2}+e^{-\beta\over 2})
 \over \Pi_{\alpha\in \Phi^+_{\bar 0}}(e^{\alpha\over 2}-e^{-\alpha\over 2})}
 \sum_{w\in W}(-1)^{\ell(w)}e^{w(\lambda+\rho)}.
  \end{align*}
Here as usual $\ell(w)$ denotes the length of $w\in W$, and $\rho$ is the Weyl vector given by
\[
\rho=\rho_{\bar 0}-\rho_{\bar 1},
\quad \text{ where }\rho_{\bar 0}=\hf\sum_{\alpha\in\Phi^+_{\bar 0}} \alpha,\quad
\rho_{\bar 1} =\hf\sum_{\beta\in\Phi^+_{\bar 1}}\beta.
\]

\begin{prop}
  \label{prop:EulerWeyl}
Let $\la \in X^+(T)$. The Euler characteristic  is given by
\begin{align*}
\chi (\la)
={\Pi_{\beta\in \Phi^+_{\bar 1}}(e^{\beta\over 2}+e^{-\beta\over 2})
 \over \Pi_{\alpha\in \Phi^+_{\bar 0}}(e^{\alpha\over 2}-e^{-\alpha\over 2})}
 \sum_{w\in W}(-1)^{\ell(w)}e^{w(\lambda+\rho)}.
  \end{align*}
\end{prop}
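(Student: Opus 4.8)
The plan is to derive the closed formula for $\chi(\la)$ directly from the Euler characteristic identity in Lemma~\ref{lem:Euler}, specialized as in \eqref{eq:Euler}. The starting point is that $H=B^-$, so that $(\g/\bbb^-)_{\bar 1}^* \cong \n_{\bar 1}^{-*} \cong \n_{\bar 1}^+$ as $\Bev$-modules (up to the usual identifications), and hence, by the observation that the supersymmetric algebra of a purely odd space is an exterior algebra,
\[
\ch\, S\big((\g/\bbb^-)_{\bar 1}^*\big) = \prod_{\beta\in\Phi^+_{\bar 1}} (1+e^{-\beta}).
\]
Tensoring with $k_\la$ and applying the classical Weyl character formula for the alternating sum of the $R^i\ind^{\Gev}_{\Bev}$ of a $\Bev$-module (i.e., Bott's theorem / Demazure's argument in the Grothendieck group), the right-hand side of \eqref{eq:Euler} becomes
\[
\frac{\sum_{w\in W}(-1)^{\ell(w)}\, w\!\left(e^{\la+\rho_{\bar 0}}\prod_{\beta\in\Phi^+_{\bar 1}}(1+e^{-\beta})\right)}{\prod_{\alpha\in\Phi^+_{\bar 0}}(e^{\alpha/2}-e^{-\alpha/2})},
\]
which is exactly the first displayed line in the computation preceding the proposition.

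The second step is the algebraic manipulation that rewrites the numerator. Multiplying and dividing by $e^{-\rho_{\bar 1}}$ and using $\rho = \rho_{\bar 0} - \rho_{\bar 1}$, one has $e^{\la+\rho_{\bar 0}}\prod_{\beta\in\Phi^+_{\bar 1}}(1+e^{-\beta}) = e^{\la+\rho}\prod_{\beta\in\Phi^+_{\bar 1}}(e^{\beta/2}+e^{-\beta/2})$. The crucial point here is that $\prod_{\beta\in\Phi^+_{\bar 1}}(e^{\beta/2}+e^{-\beta/2})$ is $W$-invariant: the Weyl group of $\g$ permutes $\Phi^+_{\bar 1}$ only up to signs, but each factor $e^{\beta/2}+e^{-\beta/2}$ is invariant under $\beta\mapsto-\beta$, so the whole product is genuinely $W$-fixed. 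Therefore it can be pulled out of the alternating sum, yielding
\[
\chi(\la) = \frac{\prod_{\beta\in\Phi^+_{\bar 1}}(e^{\beta/2}+e^{-\beta/2})}{\prod_{\alpha\in\Phi^+_{\bar 0}}(e^{\alpha/2}-e^{-\alpha/2})}\sum_{w\in W}(-1)^{\ell(w)}e^{w(\la+\rho)},
\]
as claimed. Since $\la\in X^+(T)$, the character $\chi(\la)$ lies in the appropriate completed Grothendieck group where these manipulations with formal exponentials are legitimate, and additivity of the Euler characteristic on short exact sequences (used to reduce to composition factors of the $\Bev$-module $S\big((\g/\bbb^-)_{\bar 1}^*\big)\otimes k_\la$) guarantees that the formula is independent of the choice of a $\Bev$-filtration.

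I expect the main obstacle to be justifying the application of the Weyl character formula in the Grothendieck group of $\Gev$-modules in prime characteristic. In characteristic $p$ the higher cohomology $R^i\ind^{\Gev}_{\Bev}$ of a one-dimensional $\Bev$-module is no longer concentrated in a single degree, so one cannot invoke Bott's theorem verbatim; instead one uses that the Euler characteristic $\sum_i(-1)^i\ch R^i\ind^{\Gev}_{\Bev}(e^\mu)$ still equals $\frac{\sum_{w\in W}(-1)^{\ell(w)}e^{w(\mu+\rho_{\bar 0})}}{\prod_{\alpha\in\Phi^+_{\bar 0}}(e^{\alpha/2}-e^{-\alpha/2})}$ for all $\mu\in X(T)$ by the standard argument (the Euler characteristic is $W$-alternating and is computed on the flag variety $\Gev/\Bev$; see \cite[II.5]{Jan03}), and then extends $\Bev$-linearly over the filtration of $S\big((\g/\bbb^-)_{\bar 1}^*\big)$. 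One also needs the hypotheses (Q5)--(Q6) of \cite{B06} — projectivity of $\Gev/\Bev^-$ and local decomposability of $G/B^-$ — to be in force, which is recorded via \cite{MZ17, Z18}. Once these points are granted, the remaining computation is the routine bookkeeping with $\rho_{\bar 0}$, $\rho_{\bar 1}$, and the $W$-invariant odd product displayed above.
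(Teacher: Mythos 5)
Your argument is essentially identical to the paper's own derivation: reduce via Lemma~\ref{lem:Euler} and \eqref{eq:Euler} to the even group, use additivity on composition factors of the $\Bev$-module, take the exterior-algebra character $\prod_{\beta\in\Phi^+_{\bar 1}}(1+e^{-\beta})$, apply the Euler-characteristic form of the Weyl character formula (valid in characteristic $p$ by \cite[II.5]{Jan03}), and pull out the $W$-invariant factor after rewriting with $\rho=\rho_{\bar 0}-\rho_{\bar 1}$. The only blemish is the intermediate identification $(\g/\bbb^-)^*_{\bar 1}\cong\n^+_{\bar 1}$, which should read $\n^-_{\bar 1}$ (weights $-\Phi^+_{\bar 1}$), but the character you actually use is the correct one, so the computation goes through as in the paper.
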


\begin{prop}
  \label{prop:EulerFinite}
Let $\la \in X^+(T)$ be such that $\la+\rho$ is $G_\oo$-dominant and regular. Then $L(\la)$ is finite dimensional.
\end{prop}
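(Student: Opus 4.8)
The plan is to show that when $\la+\rho$ is $G_\oo$-dominant and regular, the Euler characteristic $\chi(\la)$ computed in Proposition~\ref{prop:EulerWeyl} already forces $L(\la)$ to be finite dimensional, by identifying $\chi(\la)$ with the (finite) character of the induced $\DGev$-module $\ind^{\Gev}_{\Bev} S\big((\g/\bbb^-)^*_{\bar 1}\big)\otimes k_\la$. First I would invoke the classical Kempf vanishing / Weyl character formula for the reductive group $\Gev$: since $\la+\rho$ is $G_\oo$-dominant and regular, for every composition factor $k_\mu$ of the finite-dimensional $\Bev$-module $S\big((\g/\bbb^-)^*_{\bar 1}\big)\otimes k_\la$ the weight $\mu$ has the property that either $\mu+\rho_\oo$ is $G_\oo$-dominant (so $R^i\ind^{\Gev}_{\Bev} k_\mu$ is concentrated in degree $0$ and equals the irreducible $\Gev$-character $\ch H^0_{\Gev}(\mu)$) or $\mu+\rho_\oo$ is $\Gev$-singular (so all $R^i\ind^{\Gev}_{\Bev} k_\mu$ vanish). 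Here I would use that $\rho = \rho_\oo - \rho_{\bar 1}$ and that the odd weights $\beta/2$ appearing shift $\la+\rho_\oo$ only within the $\rho_{\bar 1}$-neighbourhood, so the contributing characters are governed exactly by the alternating Weyl sum written in Proposition~\ref{prop:EulerWeyl}.

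Next I would observe that the right-hand side of the displayed formula in Proposition~\ref{prop:EulerWeyl}, namely
\[
\frac{\Pi_{\beta\in \Phi^+_{\bar 1}}(e^{\beta/2}+e^{-\beta/2})}{\Pi_{\alpha\in \Phi^+_{\bar 0}}(e^{\alpha/2}-e^{-\alpha/2})}\sum_{w\in W}(-1)^{\ell(w)}e^{w(\la+\rho)},
\]
is the character of an actual finite-dimensional module, because the Weyl denominator cancels and what remains is $\Pi_{\beta}(e^{\beta/2}+e^{-\beta/2})$ times the Weyl character of the irreducible $\Gev$-module of highest weight $(\la+\rho)-\rho_\oo = \la - \rho_{\bar 1}$ (which is $\Gev$-dominant by the regularity and dominance hypothesis on $\la+\rho$). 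In other words, $\chi(\la)$ is a genuine character with nonnegative integer coefficients and finite support, equal to $\ch\big(S((\g/\bbb^-)^*_{\bar 1})\otimes H^0_{\Gev}(\la-\rho_{\bar 1})\big)$ up to the sign bookkeeping; in particular the higher cohomology $H^i(\la)$ for $i>0$ contributes nothing that cannot be absorbed, and $H^0(\la) = \ind^G_{B^-} k_\la$ is finite dimensional.

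Finally I would conclude: $L(\la)$ is the socle (equivalently, the unique simple submodule) of $H^0(\la) = \ind^G_{B^-}k_\la$ by the standard highest weight theory for $\DG$ via $\Dist(B)$-$\Dist(B^-)$ duality, so $\dim L(\la) \le \dim H^0(\la) < \infty$. To make this last step airtight one either argues directly that $H^0(\la)$ has a highest weight vector of weight $\la$ generating a copy of (a quotient surjecting onto) $L(\la)$, or uses Frobenius reciprocity $\Hom_{\DG}(M(\la), H^0(\la)) = \Hom_{\Dist(B)}(k_\la, k_\la) \neq 0$ together with finiteness of $H^0(\la)$. I expect the main obstacle to be the cohomology-vanishing bookkeeping in the first step: one must check carefully that the shift by $\rho_{\bar 1}$ coming from the odd roots keeps all the relevant $\Bev$-composition-factor weights $\mu$ in the ``good'' range where Kempf vanishing applies — i.e., that $G_\oo$-regularity of $\la+\rho$ is exactly the condition preventing any $\mu+\rho_\oo$ from landing on a wall with the wrong sign — and that the alternating sum over $W$ therefore does not produce cancellations forcing hidden infinite-dimensional pieces. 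Once that is in place, the rest is formal manipulation of characters already carried out in the derivation of Proposition~\ref{prop:EulerWeyl}.
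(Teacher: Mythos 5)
Your first step contains a claim that is false and cannot be repaired in the form you state it: for a composition factor $k_\mu$ of $S\big((\g/\bbb^-)^*_{\bar 1}\big)\otimes k_\la$, the weight $\mu+\rho_\oo$ need not be either $\Gev$-dominant or singular. The weights involved are $\mu+\rho_\oo=(\la+\rho)+\tfrac12\big(\sum_{\beta\notin S}\beta-\sum_{\beta\in S}\beta\big)$ for subsets $S\subseteq\Phi^+_{\bar 1}$, and dominance plus regularity of $\la+\rho$ does not prevent these from being regular but non-dominant; in characteristic $p$ the cohomology $R^i\ind^{\Gev}_{\Bev}k_\mu$ of such weights is not governed by Borel--Weil--Bott, so you cannot conclude that $\chi(\la)$ is the character of a single finite-dimensional module, nor that the higher $H^i(\la)$ ``contribute nothing.'' Your last step has a second, independent gap: Frobenius reciprocity $\Hom_G(M,\ind^G_{B^-}k_\la)=\Hom_{B^-}(M,k_\la)$ applies to rational $G$-modules $M$, but the $\DG$-module $L(\la)$ (or $M(\la)$) is not yet known to be one --- that is exactly what is being proved --- and the $\Dist(B)$-adjunction only gives $\Hom_{\DG}(M(\la),H^0(\la))=\Hom_{\Dist(B)}(k_\la,H^0(\la))$, whose nonvanishing requires knowing that $H^0(\la)$ itself (not just the Euler characteristic) has a highest weight vector of weight $\la$, which you have not established.

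The paper's proof sidesteps all of this. Finite-dimensionality of \emph{every} $H^i(\la)$ is quoted from Brundan's general arguments (\cite[Corollary~2.8, Lemma~4.2]{B06}) and requires no dominance or vanishing statement whatsoever. The only place the hypothesis on $\la+\rho$ enters is the observation that, since $\la+\rho$ is dominant and regular, the alternating Weyl sum $\sum_w(-1)^{\ell(w)}e^{w(\la+\rho)}$ has uncancelled leading term $e^{\la+\rho}$, so the top weight of $\chi(\la)$ is $\la+\rho+(\rho_{\bar 1}-\rho_{\bar 0})=\la$ with coefficient $1$. Since each $H^i(\la)$ is a finite-dimensional $G$-module, by Proposition~\ref{prop:equiv} and Lemma~\ref{lem:simple} its composition factors are finite-dimensional modules $L(\mu)$ with $\mu\le\la$, and the nonzero coefficient of $e^\la$ forces $L(\la)$ to occur among them; hence $L(\la)$ is finite dimensional. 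You have the germ of this in your remark about the Weyl numerator, but the route you actually propose --- cohomology vanishing plus an embedding of $L(\la)$ into $H^0(\la)$ --- fails at the two points above.
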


\begin{proof}
By the same arguments as in \cite[Corollary 2.8, Lemma 4.2]{B06}, all $H^i(\lambda)$ are finite-dimensional $G$-modules. By assumption $\lambda+\rho$ is $G_\oo$-dominant and regular, and hence, the highest weight of the Euler characteristic in Proposition~ \ref{prop:EulerWeyl} equals $\lambda+\rho +(\rho_1-\rho_0)=\lambda$.
The proposition now follows from Proposition \ref{prop:equiv} and Lemma \ref{lem:simple}.
\end{proof}

\section{Modular representations of the supergroup of type $\D$}
  \label{sec:D}

\subsection{Weights and roots for $\D$}

The Lie superalgebra $\g=\D$ is a family of simple Lie superalgebras of basic type, which depends on a parameter $\ka\in k\setminus\{0,-1\}$. There are  isomorphisms of Lie superalgebras with different parameters 
\begin{equation}
\label{D:iso}
D(2|1;\ka)  \cong
D(2|1; -1-\ka^{-1})  \cong
D(2|1;\ka^{-1}).
\end{equation}
Then $\g =\g_\oo \oplus \g_\one$, where $\g_\oo \cong \sll \oplus \sll \oplus \sll$ and, as a $\g_{\bar 0}$-module,  $\g_\one \cong k^2\boxtimes k^2 \boxtimes k^2$. Here $k^2$ is the natural  representation of $\sll$.

Let $\h^*$ be the dual of the Cartan subalgebra with 
basis $\{\delta, \ep_1,\ep_2\}$. We equip $\h^*$ with a $k$-valued bilinear form $(\cdot,\cdot)$ such that $\{\delta, \ep_1,\ep_2\}$ are orthogonal and
\begin{align}
 \label{form:D}
(\delta, \delta) = -(1+\ka),
\quad
(\ep_1, \ep_1) = 1,
\quad
(\ep_2, \ep_2) = \ka.
\end{align}
The root system for $\g =\g_\oo \oplus \g_\one$ is denoted by $\Phi =\Phi_{\bar 0} \cup \Phi_{\bar 1}$.
The set of simple roots of the standard simple system in $\h^*$ of $\D$ is chosen to be
\begin{align*}
\Pi =\{\alpha_0=\delta-\ep_1-\ep_2,\alpha_1=2\ep_1,\alpha_2=2\ep_2\}.
\end{align*}
The Dynkin diagram associated to $\Pi$ is depicted as follows:

\begin{center}
\hskip-2cm
\begin{tikzpicture}
\node at (0,0) {$\bigotimes$};
\draw (0,0)--(1,1);
\draw (0,0)--(1,-1);
\node at (1.15,1.1) {$\bigcirc$};
\node at (1.15,-1.1) {$\bigcirc$};
\node at (-1.1,0) {\tiny $\delta-\ep_1-\ep_2$};
\node at (1.7,1.1) {\tiny $2\ep_1$};
\node at (1.7,-1.1) {\tiny $2\ep_2$};
\node at (-3,0) { $\Pi$:};
\end{tikzpicture}
\end{center}
The set of positive roots is $\Phi^+ =\Phi^+_\oo \cup \Phi^+_\one$, where
\begin{align*}
\Phi^+_{\bar 0}=\{2\delta,2\ep_1,2\ep_2\},\quad
\Phi^+_{\bar 1}=\{\delta-\ep_1-\ep_2,\delta+\ep_1-\ep_2,\delta-\ep_1+\ep_2,\delta+\ep_1+\ep_2\}.
\end{align*}
One computes the Weyl vector
\begin{align*}
\rho =-\delta+\ep_1+\ep_2 \; (=-\alpha_0).
\end{align*}
Let $$X=\Z\delta+\Z\ep_1+\Z\ep_2$$ denote the weight lattice of $\g$.
%

We denote the positive odd roots by
\begin{equation}  \label{eq:betaD}
\beta_1=\delta-\ep_1-\ep_2, \quad
\beta_2=\delta+\ep_1-\ep_2, \quad
\beta_3=\delta-\ep_1+\ep_2, \quad
\beta_4=\delta+\ep_1+\ep_2.
\end{equation}

There are 4 conjugate classes of positive systems under the Weyl group action. The 4 positive systems containing $\Phi^+_\oo$ admit the following simple systems $\Pi^i$ $(0\le i\le 3)$, which are obtained from one another by applying odd reflections (see \cite[\S1.4]{CW12} for an introduction to odd reflections):
    \begin{align*}
    &\Pi^{0}:=\Pi=\{\delta-\ep_1-\ep_2, \; 2\ep_1, \; 2\ep_2\},
    \cr
   &\Pi^{1}:=r_{\beta_1}(\Pi)=\{-\delta+\ep_1+\ep_2,\;  \delta+\ep_1-\ep_2, \;  \delta-\ep_1+\ep_2\},
   \cr
   &\Pi^{2}:=r_{\beta_2}(\Pi^{1})=\{2\ep_1, \;  - \delta-\ep_1+\ep_2, \;  2\delta \},
   \cr
   &\Pi^{3}:=r_{\beta_3}(\Pi^{1})=\{2\ep_2, \;  2\delta, \;   -\delta+\ep_1-\ep_2\}.
    \end{align*}
The Dynkin diagrams of $\Pi^1$, $\Pi^2$, and $\Pi^3$ are respectively as follows:

\begin{center}
\hskip-1cm
\begin{tikzpicture}
\node at (-6,0) {$\bigotimes$};
\draw (-6,0)--(-5,1);
\draw (-6,0)--(-5,-1);
\draw (-4.85,-.9)--(-4.85,.9);
\node at (-4.85,1.1) {$\bigotimes$};
\node at (-4.85,-1.1) {$\bigotimes$};
\node at (-7.1,0) {\tiny $-\delta+\ep_1+\ep_2$};
\node at (-6,1.1) {\tiny $\delta+\ep_1-\ep_2$};
\node at (-6,-1.1) {\tiny $\delta-\ep_1+\ep_2$};
\node at (-6.2,-2) { $\Pi^1$};
\node at (-2,0) {$\bigotimes$};
\draw (-2,0)--(-1,1);
\draw (-2,0)--(-1,-1);
\node at (-.85,1.1) {$\bigcirc$};
\node at (-.85,-1.1) {$\bigcirc$};
\node at (-3.1,0) {\tiny $-\delta-\ep_1+\ep_2$};
\node at (-.3,1.1) {\tiny $2\delta$};
\node at (-.3,-1.1) {\tiny $2\ep_1$};
\node at (-2,-2) { $\Pi^2$};
\node at (2,0) {$\bigotimes$};
\draw (2,0)--(3,1);
\draw (2,0)--(3,-1);
\node at (3.15,1.1) {$\bigcirc$};
\node at (3.15,-1.1) {$\bigcirc$};
\node at (.9,0) {\tiny $-\delta+\ep_1-\ep_2$};
\node at (3.7,1.1) {\tiny $2\delta$};
\node at (3.7,-1.1) {\tiny $2\ep_2$};
\node at (2,-2) { $\Pi^3$};
\end{tikzpicture}
\end{center}
The corresponding positive systems are denoted by $\Phi^{i+}$, for $0\le i \le 3$, with  $\Phi^{0+}=\Phi^+$, and the corresponding Borel subalgebras of $\g$ are denoted by $\bbb^i$.

\subsection{Highest weight computations}
  \label{subsec:comp}

The simply connected algebraic supergroup $G$ of type $\D$ was constructed in \cite{G14}.
With respect to the standard Borel subalgebra $\bbb$  (associated to $\Phi^+$), we have
\[
X^+(T) =\{\la =d\delta + a \ep_1 + b \ep_2 \in X~|~a,b,d\in \N\}.
\]
Denote the simple $\DG$-module of highest weight $\la$ by $L(\la)$, where $\la =d\delta + a \ep_1 + b \ep_2 \in X^+(T)$.
We denote by $\la^i$ the highest weight of $L(\la)$ with respect to $\Pi^i$, for $0\le i \le 3$. So $\la^0=\la$.
We shall apply \eqref{form:D} and Lemma~\ref{lem:oddref} repeatedly to compute $\la^{i}$, for $1\le i\le 3$.

By using \eqref{form:D} we have
\[
(\la, \beta_1) = -d(1+\ka) -a -b \ka =-(a+d) -(b+d)\ka.
\]
We now divide into 2 cases (1)-(2).
\begin{enumerate}
\item  
Assume $x_1:=(a+d) +(b+d)\ka  \not\equiv 0 \pmod p$.
Then
\[
\la^{1} =\la -\beta_1 =(d-1)\de +(a+1)\ep_1 +(b+1)\ep_2.
\]

First we compute
$(\la^1, \beta_2) = -(d-1)(1+\ka) +(a+1) -(b+1)\ka =(a-d+2) -(b+d)\ka$, and then further divide into 2 subcases (a)-(b).
  \vspace{2mm}
   \begin{enumerate}
   \item  
   If $y_1:=(a-d+2) -(b+d)\ka \not\equiv 0\pmod p$, then
   \[
   \la^{2} =\la^1 -\beta_2 =(d-2)\de +a \ep_1 + (b+2) \ep_2.
   \]
  \item 
  If $y_1 \equiv 0\pmod p$, then
 $ \la^{2} =\la^1 =(d-1)\de +(a+1)\ep_1 +(b+1)\ep_2.$
  \end{enumerate}

  \vspace{3mm}
    We  also have $(\la^1, \beta_3) = -(d-1)(1+\ka) -(a+1) +(b+1)\ka =-(a+d) +(b-d+2)\ka$, and then divide into 2 subcases (a$'$)-(b$'$).
     \begin{enumerate}
       \item[(a$'$)]  
        If $z_1:=-(a+d) +(b-d+2)\ka \not\equiv 0\pmod p$, then
        \[
          \la^{3} =\la^1 -\beta_3 =(d-2)\de +(a+2) \ep_1 +b\ep_2.
        \]
      \item[(b$'$)]  
       If $z_1 \equiv 0\pmod p$, then
        $\la^{3} =\la^{1} =(d-1)\de +(a+1)\ep_1 +(b+1)\ep_2.$
       \end{enumerate}

  \vspace{3mm}

 \item  
Assume $x_1\equiv 0 \pmod p$.
Then $\la^{1} =\la =d\delta + a \ep_1 + b \ep_2.$

First we compute
$(\la^1, \beta_2) = -d(1+\ka) +a -b\ka =(a-d) -(b+d)\ka$, and then divide into 2 subcases (a)-(b).
  \vspace{2mm}
   \begin{enumerate}
   \item  
   If $y_2:=(a-d) -(b+d)\ka \not\equiv 0\pmod p$, then
   \[
   \la^{2} =\la^1 -\beta_2 =(d-1)\de +(a-1) \ep_1 + (b+1) \ep_2.
   \]
  \item 
  If $y_2 \equiv 0\pmod p$, then
 $ \la^{2} =\la^1 =d\delta + a \ep_1 + b \ep_2.$
  \end{enumerate}

  \vspace{3mm}
    We  also have $(\la^1, \beta_3) = -d(1+\ka) -a +b\ka =-(a+d) +(b-d)\ka$, and then divide into 2 subcases (a$'$)-(b$'$).
     \begin{enumerate}
       \item[(a$'$)]  
        If $z_2:=-(a+d) +(b-d)\ka \not\equiv 0\pmod p$, then
        \[
          \la^{3} =\la^1 -\beta_3 =(d-1)\de +(a+1) \ep_1 +(b-1)\ep_2.
        \]
      \item[(b$'$)]  
       If $z_2 \equiv 0\pmod p$, then
        $\la^{3} =\la^{1} =d\de +a\ep_1 +b\ep_2.$
       \end{enumerate}
\end{enumerate}

\subsection{Simple modules for the supergroup $\D$}

\begin{thm}
 \label{thm:D}
 Let $p>2$. Let $G$ be the supergroup of type $\D$.
 A complete list of inequivalent simple $G$-modules consists of $L(\la)$, where
$\la =d\delta + a\ep_1 + b\ep_2$, with $d,a,b \in \N$, such that
one of the following conditions is satisfied:
 \begin{enumerate}
        \item $d=0$, and $a\equiv b\equiv 0 \pmod p$;
        \item  $d =1$, and $(a+1) -(b+1)\ka \equiv 0 \pmod p$;
        \item  $d =1$, and $(a+1) +(b+1)\ka \equiv 0 \pmod p$;
        \item $d\ge 2$, (and $a,b \in \N$ are arbitrary).
\end{enumerate}
\end{thm}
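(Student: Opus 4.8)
The plan is to combine the local finiteness criterion (Lemmas~\ref{lem:rational} and \ref{lem:lf=fd}) with the explicit odd reflection computations of \S\ref{subsec:comp}. By Proposition~\ref{prop:equiv} and Lemma~\ref{lem:simple}, classifying simple $G$-modules amounts to determining $X^\dagger(T)$ as in \eqref{X+T}. For $\la = d\delta + a\ep_1 + b\ep_2 \in X^+(T)$, the key point is that there are only 4 relevant simple systems $\Pi^0, \Pi^1, \Pi^2, \Pi^3$ containing $\Phi_\oo^+$, and by the observation recalled in the proof of Lemma~\ref{lem:rational}, every positive even root (or its half) appears as a simple root in one of these. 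Concretely, $2\ep_1$ and $2\ep_2$ are simple in $\Pi^0$, while $2\delta$ is simple in $\Pi^2$ (and $\Pi^3$). So $L(\la)$ is finite dimensional if and only if, for each $i$, the highest weight $\la^i$ with respect to $\Pi^i$ pairs non-negatively with the relevant even simple coroots — that is, $\la$ is dominant integral for $SL_{2,2\ep_1}$ and $SL_{2,2\ep_2}$ via $\Pi^0$, and $\la^2$ (resp.\ $\la^3$) is dominant integral for $SL_{2,2\delta}$ via $\Pi^2$ (resp.\ $\Pi^3$).

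Next I would turn the case analysis of \S\ref{subsec:comp} into the dominance conditions on $\la^2$ and $\la^3$. From $\la \in X^+(T)$ we already have $a,b,d \geq 0$. Since $2\delta$ is a simple root of $\Pi^2$ and of $\Pi^3$, the requirement is that the $\delta$-coefficient of $\la^2$ and of $\la^3$ be a non-negative integer (the coefficient computes the relevant $\mathfrak{sl}_2$-weight since $\delta$ is orthogonal to $\ep_1, \ep_2$). Reading off \S\ref{subsec:comp}: in every branch, $\la^2$ and $\la^3$ have $\delta$-coefficient equal to $d-2$, $d-1$, or $d$, depending on which congruences among $x_1, y_1, z_1, y_2, z_2$ hold. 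When $d \geq 2$ all these coefficients are $\geq 0$ automatically, giving case (4). When $d \leq 1$ one must track exactly when the coefficient can drop below $0$: this happens precisely when an odd reflection actually subtracts $\beta_i$, i.e.\ when the corresponding pairing is $\not\equiv 0 \pmod p$. So for $d=1$, finite-dimensionality forces $x_1 \equiv 0$ \emph{or} the subsequent reflections not to decrease the $\delta$-coefficient below zero; for $d=0$ one needs even more. I would then carefully match these with the stated conditions (1)--(3), using the isomorphism \eqref{D:iso} and the symmetry $\ep_1 \leftrightarrow \ep_2$ (which swaps $\ka \leftrightarrow \ka^{-1}$) to reduce the bookkeeping.

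For the $d=1$ case, starting from $\la = \delta + a\ep_1 + b\ep_2$: if $x_1 \not\equiv 0$ then $\la^1 = (a+1)\ep_1 + (b+1)\ep_2$ has $\delta$-coefficient $0$, and then whether $\la^2$ (resp.\ $\la^3$) keeps $\delta$-coefficient $\geq 0$ requires $y_1 \equiv 0$ (resp.\ $z_1 \equiv 0$); one checks $y_1 = (a+1) - (b+1)\ka + 1$... — here I would recompute $y_1, z_1$ in terms of $a+1, b+1$ and $d=1$ and see that the surviving condition is exactly $(a+1) \mp (b+1)\ka \equiv 0$, yielding (2) and (3). If instead $x_1 \equiv 0$, then $\la^1 = \la$ has $\delta$-coefficient $1 > 0$, and the further reflections by $\beta_2, \beta_3$ drop it only to $0$, so $L(\la)$ is automatically finite dimensional — but $x_1 \equiv 0$ with $d=1$ reads $(a+1) + (b+1)\ka \equiv 0$, which is exactly condition (3) again (using $x_1 = (a+d)+(b+d)\ka = (a+1)+(b+1)\ka$). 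For $d=0$: $\la = a\ep_1 + b\ep_2$ with $x_1 = a + b\ka$; any odd reflection that subtracts a $\beta_i$ produces a negative $\delta$-coefficient, so finite-dimensionality forces $x_1, y_2, z_2$ all $\equiv 0$ — and I would check that this system of congruences is equivalent to $a \equiv b \equiv 0 \pmod p$, giving (1).

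The main obstacle will be the last reduction — showing that for $d=0$ the three congruences $x_1 \equiv y_2 \equiv z_2 \equiv 0$ collapse to $a \equiv b \equiv 0 \pmod p$, and that no \emph{intermediate} choice of congruences can rescue finite-dimensionality when $d \leq 1$. This requires being careful that $\ka \in k \setminus \{0,-1\}$ can itself be a root of unity or otherwise special, so one cannot simply "divide by $\ka$"; instead one must argue that, e.g., $a + b\ka \equiv 0$ and $a - b\ka \equiv 0$ together force $2a \equiv 2b\ka \equiv 0$, hence $a \equiv 0$ (as $p > 2$) and then $b\ka \equiv 0$, and since $\ka \neq 0$ in $k$ this gives $b \equiv 0$ — and to make sure I have correctly identified $y_2, z_2$ evaluated at $d=0$ so that this reasoning applies. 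I would also double-check the edge cases $a=0$ or $b=0$ separately, and confirm that every weight satisfying (1)--(4) indeed gives a genuinely \emph{new} simple module (which is automatic from the non-isomorphism statement for $L(\la)$ with distinct $\la$). Finally, I would invoke Lemma~\ref{lem:rational} together with Proposition~\ref{prop:EulerFinite} where convenient — for instance, $d \geq 2$ makes $\la + \rho$ regular and dominant, so Proposition~\ref{prop:EulerFinite} gives case (4) directly, perhaps more cleanly than the odd-reflection argument.
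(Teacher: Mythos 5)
Your proposal is correct and follows essentially the same route as the paper: run the odd-reflection computations of \S\ref{subsec:comp}, impose dominance of the transformed highest weights via Lemma~\ref{lem:rational} and Lemma~\ref{lem:lf=fd}, and simplify the resulting congruences (your $d=0$ reduction using $p>2$ and $\ka\neq 0$, and the observation that $z_1=-y_1$ and $z_2=-y_2$ when the $\delta$-coefficient matters, match the paper's cases (i)--(v) and their collapse to conditions (1)--(4)). The only variations are cosmetic: you check dominance only against the coroot that is simple in each $\Pi^i$ (which is what the proof of Lemma~\ref{lem:rational} actually uses, and the paper instead verifies that the extra constraints like $a\ge 1$, $b\ge 1$ are automatic), and you note that Proposition~\ref{prop:EulerFinite} handles $d\ge 2$ directly since $\la+\rho=(d-1)\delta+(a+1)\ep_1+(b+1)\ep_2$ is dominant regular --- a shortcut the paper uses for $G(3)$ and $F(3|1)$ but not here.
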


\begin{proof}
From the computations in \S\ref{subsec:comp} on the highest weights $\la^i$ $(1\le i \le 3)$ and their associated conditions, we obtain the following (mutually exclusive) sufficient and necessary conditions for $L(\la)$ to be finite dimensional:
       \begin{enumerate}
        \item[(i)] $d=0$, $(a+d) +(b+d)\ka \equiv 0$, $(a-d) -(b+d)\ka \equiv 0$, $-(a+d) +(b-d)\ka \equiv 0$;
        \item[(ii)]  $d =1$,  $(a+d) +(b+d)\ka  \not\equiv 0$, $(a-d+2) -(b+d)\ka \equiv 0$, $-(a+d) +(b-d+2)\ka \equiv 0$;
       \item[(iii-a)]
         $d =1$,  $(a+d) +(b+d)\ka \equiv 0$, $(a-d) -(b+d)\ka \not\equiv 0$ with $a \ge 1$, $-(a+d) +(b-d)\ka \not\equiv 0$ with $b \ge 1$;
         \item[(iii-b)]  $d =1$,  $(a+d) +(b+d)\ka \equiv 0$, $(a-d) -(b+d)\ka \equiv 0$, $-(a+d) +(b-d)\ka \not\equiv 0$ with $b \ge 1$;
        \item[(iv-a)]
         $d =1$,  $(a+d) +(b+d)\ka \equiv 0$, $(a-d) -(b+d)\ka \not\equiv 0$ with $a \ge 1$, $-(a+d) +(b-d)\ka \equiv 0$;
         \item[(iv-b)]  $d =1$,  $(a+d) +(b+d)\ka \equiv 0$, $(a-d) -(b+d)\ka \equiv 0$, $-(a+d) +(b-d)\ka \equiv 0$;
        \item[(v)] $d\ge 2$, (and $a,b \in \N$ are arbitrary).
        \end{enumerate}

In Case (i), we obtain $d=0$ and $a\equiv b \equiv 0$, that is, Condition (1) in the theorem. Case~ (v) is the same as Condition (4).

Condition~ (ii) with the help of Condition~ (iii-a) simplifies to Condition (2).

We note that the seemingly additional constraints $a\ge 1$ and $b\ge 1$ in (iii-a)-(iii-b) as well as (iv-a)-(iv-b) follow automatically from the other conditions. Therefore, Conditions (iii-a)-(iv-b) simplify to Condition~(3).

 The theorem is proved.
\end{proof}

\begin{rem}
Theorem~\ref{thm:D} makes sense over $\C$, providing an odd reflection approach to the classification of finite-dimensional simple modules over $\C$ (due to \cite{Kac77}).  Indeed this classification can be read off from Theorem~\ref{thm:D} by regarding $p=\infty$.
\end{rem}

\section{Modular representations of the supergroup of type $G(3)$}
  \label{sec:G}

 \subsection{Weights and roots for the supergroup $G(3)$}
Let $\g=\g_\oo\oplus\g_\one$ be the exceptional simple Lie superalgebra $G(3)$.
 We assume $\ep_1, \ep_2, \ep_3$ satisfy the linear relation
 \[
 \ep_1 +\ep_2 +\ep_3 =0.
 \]
 The root system is $\Phi =\Phi_\oo \cup \Phi_\one$.
 We choose the standard simple system $\Pi =
\{\alpha_1, \alpha_2, \alpha_3\}$, where
\[
\alpha_1 = \ep_2 -\ep_1,
\quad
\alpha_2=\ep_1,
\quad
\alpha_3 =  \delta +\ep_3.
\]

The Dynkin diagram associated to $\Pi$ is depicted as follows:
\vskip .5cm
\begin{center}
\begin{tikzpicture}
\node at (0,0) {$\bigcirc$};
\draw (0.2,0)--(1.155,0);
\draw (0.15,0.1)--(1.2,0.1);
\draw (0.15,-0.1)--(1.2,-0.1);
\node at (1.35,0) {$\bigcirc$};
\node at (0.75,0) {\Large $>$};
\draw (1.52,0)--(2.52,0);
\node at (2.7,0) {$\bigotimes$};
\node at (-0.3,-.5) {\tiny $\al_1=\ep_2-\ep_1$};
\node at (1.3,-.5) {\tiny $\al_2=\ep_1$};
\node at (2.9,-.5) {\tiny $\al_3=\delta+\ep_3$};
\node at (-2,0) { $\Pi$:};
\end{tikzpicture}
\end{center}

Then the standard positive roots are $\Phi^+ =\Phi_\oo^+ \cup \Phi_\one^+$,
where
\[
\Phi_\oo^+ =\{2\delta, \ep_1, \ep_2, -\ep_3, \ep_2 -\ep_1, \ep_1 -\ep_3, \ep_2 -\ep_3 \},
\qquad
\Phi_\one^+ =\{\delta, \,   \delta \pm \ep_i\mid 1\le i \le 3\}.
\]
The Weyl vector for $\g$ is
\begin{equation}
  \label{rho:ep}
 \rho =- \frac52\, \delta +2\ep_1 +3\ep_2, \qquad \rho_{\bar 1} =\frac{7}{2}\delta.
\end{equation}

We have $\g_\oo \cong \ G_2 \oplus \sll$ and $\g_\one \cong k^7 \boxtimes k^2$ as an adjoint $\g_\oo$-module,
where $k^7$ denotes denotes the $7$-dimensional simple $G_2$-module and, as before, $k^2$ the natural $\sll$-module. Note that $\{\al_1, \al_2\}$ forms a simple system of $G_2$, and
we denote by $\om_1, \om_2$ the corresponding fundamental weights of $G_2$.
We have
\begin{align*}
\om_1 =\ep_1 + 2\ep_2, \qquad
\om_2 &=\ep_1 +\ep_2;
\\
\ep_1= 2\om_2 -\om_1,  \qquad
\ep_2 &= \om_1 -\om_2.
\end{align*}
We can rewrite the formulae for $\rho$ in \eqref{rho} as
\begin{equation}
  \label{rho}
\rho =-\frac52 \de +\om_1 +\om_2.
\end{equation}

Denote the weight lattice of $\g$ by
\[
X =\Z\delta \oplus X_2,
\]
where
\[
X_2=\Z\om_1 \oplus \Z\om_2 =\Z\ep_1 \oplus \Z\ep_2
\]
is the weight lattice of $G_2$.

The bilinear form $(\cdot, \cdot)$ on $X$ is given by
\[
(\delta, \delta) =-2, \quad (\delta, \ep_i) =0,
\quad
(\ep_i, \ep_i) =2, \quad (\ep_i, \ep_j) =-1, \quad \text{ for } 1\le i \neq j \le 3.
\]
It follows that
\begin{align}  \label{eq:omep}
\begin{split}
(\om_1, \ep_1) =0,  \quad
(\om_1, \ep_2) =3, \quad (\om_1, \ep_3) =-3,
\\
(\om_2, \ep_1) =1, \quad (\om_2, \ep_2) =1,
\quad (\om_2, \ep_3) =-2.
\end{split}
\end{align}

%

Denote the following positive odd roots of $G(3)$ by
\begin{equation}  \label{eq:beta}
\beta_1=\delta+\ve_3, \quad \beta_2=\delta-\ve_2, \quad \beta_3=\delta-\ve_1.
\end{equation}

There are 4 conjugate classes of positive systems under the Weyl group action. The 4 positive systems containing $\Phi^+_\oo$ admit the following simple systems $\Pi^i$ $(0\le i\le 3)$, which are obtained from one another by applying odd reflections (cf. \cite[\S1.4]{CW12}):
    \begin{align*}
    &\Pi^{0}:=\Pi=\{\varepsilon_2-\varepsilon_1, \varepsilon_1,  \delta+\varepsilon_3\},
    \cr
   &\Pi^{1}:=r_{\beta_1}(\Pi)=\{\ve_2-\ve_1, \delta-\ve_2,  -\delta-\ve_3 \},
   \cr
   &\Pi^{2}:=r_{\beta_2}(\Pi^{1})=\{\delta-\ve_1, -\delta+\ve_2,  \ve_1 \},
   \cr
   &\Pi^{3}:=r_{\beta_3}(\Pi^{2})=\{-\delta+\ve_1, \ve_2-\ve_1, \delta\}.
    \end{align*}
The Dynkin diagrams of $\Pi^1$, $\Pi^2$, and $\Pi^3$ are respectively as follows:
\begin{center}
\begin{tikzpicture}
\node at (0,0.5) {$\bigcirc$};
\draw (0.2,0.5)--(1.155,0.5);
\draw (0.15,0.6)--(1.2,0.6);
\draw (0.15,0.4)--(1.2,0.4);
\node at (1.35,0.5) {$\bigotimes$};
\node at (0.75,0.5) {\Large $>$};
\draw (1.52,0.5)--(2.52,0.5);
\node at (2.7,0.5) {$\bigotimes$};
\node at (0,0) {\tiny $\ep_2-\ep_1$};
\node at (1.3,0) {\tiny $\delta-\ep_2$};
\node at (2.6,0) {\tiny $-\delta-\ep_3$};
\node at (1.3,-.8) { $\Pi^1$};
\node at (5,0) {$\bigotimes$};
\draw (5.2,.1)--(6.155,1);
\node at (6.35,1) {$\bigotimes$};
\draw (6.52,1)--(7.52,.1);
\draw (5.2,0)--(7.52,0);
\node at (7.7,0) {$\bigcirc$};
\node at (5,-.5) {\tiny $\delta-\ep_1$};
\node at (6.3,1.5) {\tiny $-\delta+\ep_2$};
\node at (7.6,-.5) {\tiny $\ep_1$};
\node at (6.3,-.8) { $\Pi^2$};
\node at (10,0) {$\bigcirc$};
\draw (10.2,.1)--(11.155,1);
\node at (11.35,1) {$\bigotimes$};
\draw (11.52,1)--(12.52,.1);
\draw (10.2,0)--(12.52,0);
\draw (10.2,-.1)--(12.52,-.1);
\draw (10.2,.1)--(12.52,.1);
\node at (11.5,0) {\Large $>$};
\draw[fill] (12.7,0) circle (1ex);
\node at (10,-.5) {\tiny $\ep_2-\ep_1$};
\node at (11.3,1.5) {\tiny $-\delta+\ep_1$};
\node at (12.7,-.5) {\tiny $\delta$};
\node at (11.3,-.8) { $\Pi^3$};
\end{tikzpicture}
\end{center}

The corresponding positive systems are denoted by $\Phi^{i+}$, for $0\le i \le 3$, with  $\Phi^{0+}=\Phi^+$, and the corresponding Borel subalgebras of $\g$ are denoted by $\bbb^i$.



%
%
\subsection{Highest weight computations}
  \label{subsec:comp}

The (simply connected) algebraic supergroup $G$ of type $G(3)$ was constructed in \cite{FG12}. With respect to the standard Bore subalgebra $\bbb$ (associated to $\Phi^+$), we have
\begin{equation*}
X^+(T) =\{ \la =n\delta + r \om_1 +s \omega_2 \in X \mid n,r,s\in \N
\}.
\end{equation*}
 Denote by $L(\la) =L^\bbb(\la)$ the irreducible $\DG$-module  of highest weight $\la$ with respect to the standard Borel subalgebra $\bbb$, where
\[
\la =d\delta + r \om_1 +s \omega_2 \in X^+(T).
\]
Assume the simple module $L(\la) =L^\bbb(\la)$ has $\bbb^{i}$-highest weight $\la^{i}$, for $i=1,2,3$.
We shall apply \eqref{eq:omep} and Lemma~\ref{lem:oddref} repeatedly to compute $\la^{i}$, for $1\le i\le 3$.
We have
$(\la, \beta_1) =-2d-3r-2s.$ We now divide into 2 cases (1)--(2).
\begin{enumerate}
\item  
Assume $\it{x_1:=2d+3r+2s}$ $\not\equiv 0 \pmod p$.
Then
\[
\la^{1} =\la -\beta_1 =(d-1)\de +r\om_1 +(s+1)\om_2.
\]
We obtain
$(\la^1, \beta_2) =-2d-3r-s+1.$ We then divide into 2 subcases (a)-(b).
  \vspace{2mm}
   \begin{enumerate}
   \item  
  Assume $\it y_1:=2d+3r+s-1$ $\not\equiv 0\pmod p$. Then
   \[
   \la^{2} =\la^1 -\beta_2 =(d-2)\de +(r+1)\om_1 +s\om_2.
   \]
  We  have $(\la^2, \beta_3) =-2d-s+4.$
     \begin{enumerate}
       \item  
        If $\it z_1:=2d+s-4$ $\not\equiv 0\pmod p$, then
        \[
          \la^{3} =\la^2 -\beta_3 =(d-3)\de +r\om_1 +(s+2)\om_2.
        \]
      \item 
       If $z_1 \equiv 0\pmod p$, then
        $\la^{3} =\la^2 =(d-2)\de +(r+1)\om_1 +s\om_2.$
       \end{enumerate}
  \vspace{2mm}
  \item 
  Assume $y_1=2d+3r+s-1 \equiv 0\pmod p$. Then
  \[
   \la^{2} =\la^1 =(d-1)\de +r\om_1 +(s+1)\om_2.
  \]
  We have $(\la^2, \beta_3) =-2d-s+1.$  
       \begin{enumerate}
     \item 
       If $\it z_2:=2d+s-1$ $\not\equiv 0\pmod p$, then
       \[
         \la^{3} =\la^2 -\beta_3 =(d-2)\de +(r-1)\om_1 +(s+3)\om_2.
       \]
    \item 
       If $z_2 \equiv 0\pmod p$, then
        $\la^{3} =\la^2 =\la^1 =(d-1)\de +r\om_1 +(s+1)\om_2.$
      \end{enumerate} 
  \end{enumerate} 
  \vspace{3mm}
\item
Assume $x_1=2d+3r+2s  \equiv 0\pmod p$.
Then
$\la^{1} =\la =d\de +r\om_1 +s\om_2.$
We have
$(\la^1, \beta_2) =-2d-3r-s.$  We then divide into 2 subcases (a)-(b).
  \vspace{2mm}

  \begin{enumerate}
   \item   
   Assume $\it y_2:=2d+3r+s$ $\not\equiv 0\pmod p$. Then
   \[
   \la^{2} =\la^1 -\beta_2 =\la -\beta_2 =(d-1)\de +(r+1)\om_1 +(s-1)\om_2.
   \]
   We have $(\la^2, \beta_3) =-2d-s+3.$
      \begin{enumerate}
      \item  
        If $\it z_3:=2d+s-3$ $\not\equiv 0\pmod p$, then
        \[
        \la^{3} =\la^2 -\beta_3 =(d-2)\de +r\om_1 +(s+1)\om_2.
        \]%
      \item  
         If $z_3 \equiv 0\pmod p$, then
        $\la^{3} =\la^2 =(d-1)\de +(r+1)\om_1 +(s-1)\om_2.$
      \end{enumerate}
  \vspace{2mm}
  \item 
    Assume $y_2=2d+3r+s \equiv 0\pmod p$. Then
    \[
    \la^{2} =\la^1 =\la  =d\de +r\om_1 +s\om_2.
    \]
   We have $(\la^2, \beta_3) =-2d-s.$
      \begin{enumerate}
      \item  
        If $\it z_4:=2d+s$ $\not\equiv 0\pmod p$, then
        \[
        \la^{3} =\la^2 -\beta_3 =(d-1)\de +(r-1)\om_1 +(s+2)\om_2.
        \]%
      \item  
         If $z_4 \equiv 0\pmod p$, then
        $\la^{3} =\la^2 =\la =d\de +r\om_1 +s\om_2.$
      \end{enumerate}
  \end{enumerate}
\end{enumerate}


\begin{prop}
 \label{prop:nec}
Assume $\la =d\delta + r \om_1 +s \omega_2$, for $d,r,s\in \N$.
Then $L(\la)$ is finite dimensional if only if one of the following conditions holds:
\begin{enumerate}
\item 
   \begin{enumerate}
   \item  
       \begin{enumerate}
        \item $d\ge 3$, $2d+3r+2s \not\equiv 0$, $2d+3r+s-1\not\equiv 0$, $2d+s-4\not\equiv 0$;
        \item  $d\ge 2$, $2d+s-4 \equiv 0$, ${3}(r+1) \not \equiv 0$, $2d+3r+2s\not\equiv 0$;
        \end{enumerate}
    \item  
       \begin{enumerate}
        \item  $d\ge 2$, ${3}r\not\equiv 0$, $2d+3r+s-1 \equiv 0$, $2d+3r+2s \not\equiv 0$;
        \item  $d\ge 2$, ${3}r \equiv 0$, $2d+s-1\equiv 0$, $2d+3r+2s\not\equiv 0$;
       \end{enumerate}
  \end{enumerate}
\item 
  \begin{enumerate}
   \item
       \begin{enumerate}
        \item $d\ge 2$, $s\not\equiv 0$, $2d+3r+2s\equiv 0$, $3r+s+3\not\equiv 0$;
        \item $d\ge 1$, $s\not\equiv 0$, $2d+3r+2s\equiv 0$, $3r+s+3\equiv 0$;
        \end{enumerate}
    \item
       \begin{enumerate}
        \item  $s\equiv 0$,  ${2}d\not\equiv 0$, $2d+3r \equiv 0$;
        \item  ${2}d\equiv {3}r\equiv s \equiv 0$.
       \end{enumerate}
  \end{enumerate}
\end{enumerate}
\end{prop}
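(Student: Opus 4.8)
The plan is to leverage the local finiteness criterion assembled in Section~\ref{sec:general}: by the combination of Proposition~\ref{prop:equiv}, Lemma~\ref{lem:rational}, and Lemma~\ref{lem:lf=fd}, the module $L(\la)$ is finite dimensional if and only if $\la$ remains $G_\oo$-dominant integral when transformed to the highest weight $\la^i$ with respect to each of the Borel subalgebras $\bbb^i$, $i=0,1,2,3$ (since the simple systems $\Pi^0,\dots,\Pi^3$ between them realize each positive even root, or its half, as a simple root). So the proof reduces to reading off, from the case-by-case computation of $\la^1,\la^2,\la^3$ in \S\ref{subsec:comp}, exactly which combinations of the congruence conditions on $x_1,y_1,y_2,z_1,z_2,z_3,z_4$ force $\la^1,\la^2,\la^3$ to all lie in $X^+(T)$.

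The key steps I would carry out are as follows. First, recall that $\la^i = n_i\delta + r_i\om_1 + s_i\om_2$ is $G_\oo$-dominant integral precisely when $r_i,s_i\in\N$ (the $\de$-coefficient is automatically controlled via the branch conditions $d\ge$ something, and integrality is automatic since all the $\beta_j$ lie in the weight lattice $X$). Second, walk down the four branches of the computation in \S\ref{subsec:comp}: branch (1) splits into (1a) and (1b), each splitting further into (i)/(ii); branch (2) splits into (2a) and (2b), each splitting further into (i)/(ii). For each of the eight terminal cases, write down $\la^1,\la^2,\la^3$ explicitly and impose $r_i,s_i\ge 0$; for instance in case (1)(a)(i) one has $\la^1=(d-1)\de+r\om_1+(s+1)\om_2$, $\la^2=(d-2)\de+(r+1)\om_1+s\om_2$, $\la^3=(d-3)\de+r\om_1+(s+2)\om_2$, so dominance forces $d\ge 3$ (and $r,s\ge 0$ is given), together with the defining inequalities $x_1\not\equiv 0$, $y_1\not\equiv 0$, $z_1\not\equiv 0$ of that branch --- this is exactly condition (1)(a)(i). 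Third, translate each $z_j$ condition using \eqref{eq:omep}: note $(\la^2,\beta_3)$ computations show $z_1 = 2d+s-4$, etc., and that in the sub-branches where a coefficient like $r-1$ or $r+1$ appears one must record the extra constraint (e.g. $r\ge 1$), then observe --- as in the $\D$ proof --- that such side constraints are forced by the congruences already present (e.g. $3r\not\equiv 0$ in case (1)(b)(i) forces $r\ge 1$ since $r=0$ would give $3r\equiv 0$). Fourth, collect the eight terminal cases into the labeled list (1)(a)(i), (1)(a)(ii), (1)(b)(i), (1)(b)(ii), (2)(a)(i), (2)(a)(ii), (2)(b)(i), (2)(b)(ii) of the proposition, checking that they are mutually exclusive (they are, being distinguished by the trichotomy on $x_1$, then on $y$, then on $z$) and exhaustive.

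The main obstacle I anticipate is bookkeeping rather than conceptual: one must be scrupulous in the sub-branches where $\la^i=\la^{i-1}$ (the congruence-$\equiv 0$ cases of Lemma~\ref{lem:oddref}), because there the $\om$-coefficients do not change and so a dominance failure that looked possible in the generic branch simply cannot occur, whereas in the other sub-branches a newly produced coefficient $r\pm 1$ or $s\pm 1$ may drop below zero and must be ruled out by a congruence. In particular I would double-check the reductions of the form ``$3r\not\equiv 0 \Rightarrow r\ge 1$'' and ``$s\not\equiv 0 \Rightarrow s\ge 1$'' (valid since $p>3$, so $3$ is invertible mod $p$), which is precisely the mechanism that makes the apparently extra positivity constraints redundant, exactly as in the final paragraph of the proof of Theorem~\ref{thm:D}. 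A secondary subtlety is making sure the $\delta$-coefficient conditions ($d\ge 1,2,3$ in the various cases) are not over- or under-stated: in each terminal case the smallest $\de$-coefficient among $\la^0,\la^1,\la^2,\la^3$ dictates the bound, and one should confirm that $\la^0=\la$ with $d\ge 0$ imposes nothing beyond what the later $\la^i$ already require.

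\begin{proof}
This follows by reading off the dominance conditions on $\la^1,\la^2,\la^3$ from the case-by-case highest weight computations in \S\ref{subsec:comp}, exactly as in the proof of Theorem~\ref{thm:D}; we indicate the argument. By Proposition~\ref{prop:equiv} and Lemmas~\ref{lem:rational} and~\ref{lem:lf=fd}, $L(\la)$ is finite dimensional if and only if $\la^i \in X^+(T)$ for $i=0,1,2,3$, since the simple systems $\Pi^0,\Pi^1,\Pi^2,\Pi^3$ together realize every positive even root (and there are no divided roots here). Writing $\la^i =n_i\delta + r_i\om_1 +s_i\om_2$, which automatically lies in the weight lattice $X$, the condition $\la^i\in X^+(T)$ amounts to $r_i,s_i\in\N$.

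Running through the eight terminal branches of \S\ref{subsec:comp}, namely (1)(a)(i), (1)(a)(ii), (1)(b)(i), (1)(b)(ii), (2)(a)(i), (2)(a)(ii), (2)(b)(i), (2)(b)(ii), and imposing $r_i,s_i\ge 0$ on the displayed weights $\la^1,\la^2,\la^3$, gives in each case a bound on $d$ together with the defining congruence conditions of that branch. For example, branch (1)(a)(i) yields $\la^1=(d-1)\de+r\om_1+(s+1)\om_2$, $\la^2=(d-2)\de+(r+1)\om_1+s\om_2$, $\la^3=(d-3)\de+r\om_1+(s+2)\om_2$, whose dominance forces $d\ge 3$; together with $x_1\not\equiv 0$, $y_1\not\equiv 0$, $z_1\not\equiv 0$ this is condition (1)(a)(i) of the proposition. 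The remaining seven branches are handled identically, using \eqref{eq:omep} to rewrite the pairings $(\la^i,\beta_j)$ as the linear expressions $x_1,y_1,y_2,z_1,z_2,z_3,z_4$ recorded in \S\ref{subsec:comp}.

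Finally, whenever a terminal branch produces a coefficient of the form $r-1$ or $s-1$, dominance requires $r\ge 1$ or $s\ge 1$; but since $p>3$, the congruences $3r\equiv 0$ and $3r+s+3\equiv 0$ appearing in those branches are equivalent to $r\equiv 0$ and to congruences that combined with the others force the requisite positivity, so these side constraints are automatic, just as in the $\D$ case. The eight branches are mutually exclusive, being separated by the trichotomy on $x_1$, then on the relevant $y$, then on the relevant $z$, and exhaustive. This establishes the list (1)--(2) of the proposition.
\end{proof}
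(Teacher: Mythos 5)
Your proposal follows exactly the paper's route: the proposition is proved by reading off the dominance conditions on $\la^1,\la^2,\la^3$ from the eight terminal branches of the computation in \S\ref{subsec:comp} and invoking Lemmas~\ref{lem:rational} and \ref{lem:lf=fd} for sufficiency, just as in the proof of Theorem~\ref{thm:D}. One point you flag but leave unresolved, and which the paper's proof singles out explicitly: in branch (1)(b)(ii) one has $\la^3=\la^2=\la^1=(d-1)\de+r\om_1+(s+1)\om_2$, so mechanical reading-off yields only $d\ge 1$, whereas the proposition asserts $d\ge 2$; to match the statement one must observe that $d=1$ is incompatible with the accompanying congruences $3r\equiv 0$, $2d+s-1\equiv 0$, $2d+3r+2s\not\equiv 0$. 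Also, your parenthetical ``there are no divided roots here'' is not quite accurate for $G(3)$: the even root $2\de$ never occurs as a simple root in $\Pi^0,\dots,\Pi^3$ and is realized only through its half $\de\in\Pi^3$, which is precisely why the observation quoted in Lemma~\ref{lem:rational} carries the ``$\alpha/2$ (if it is a root)'' clause. Neither point changes the strategy, which is correct and identical to the paper's.
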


\begin{proof}
The conditions in the proposition are summary of the dominant conditions for the new highest weights after odd reflections, which were computed in \S\ref{subsec:comp}. We remark that the natural condition on $d$ from the summary in \S\ref{subsec:comp} for the case (1b)(ii) is ``$d\ge 1$", but ``$d=1$" is quickly ruled out by the other conditions $3r\equiv 0, 2d+s-1\equiv 0, 2d+3r+2s\not\equiv 0$.

It follows by Lemmas~\ref{lem:rational} and \ref{lem:lf=fd} that these conditions are also sufficient for $L(\la)$ to be finite dimensional.
\end{proof}

Note the conditions in Proposition~\ref{prop:nec} are obtained without using any division on the conditions arising from odd reflections; some scalars $2, 3$  therein appear to be unnecessary for $p>3$, and they are kept for the case when $p=3$ below.

\subsection{Simple modules for the supergroup $G(3)$ for $p>3$}

We assume the characteristic of the ground field $k$ is $p>3$ in this subsection.
We shall reformulate the conditions in Proposition~\ref{prop:nec} in a more useful form.
We first analyze the case when $d\ge 3$.
\begin{prop}
  \label{prop:d3}
For $\la =d \delta + r \om_1 +s \omega_2 \in X^+(T)$ with $d\ge 3$, the module $L(\la)$ is always finite dimensional.
\end{prop}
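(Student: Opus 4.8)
The plan is to derive Proposition~\ref{prop:d3} from Proposition~\ref{prop:nec} by showing that, when $d\ge 3$, at least one of the eight listed families of conditions is always met, no matter what $r,s\in\N$ are. The key point is that for $p>3$ the scalars $2,3$ are units mod $p$, so congruences like $2d\equiv 0$, $3r\equiv 0$ simplify to $d\equiv 0$, $r\equiv 0$, etc.; this removes the artificial constraints $3r\not\equiv 0$, $3(r+1)\not\equiv 0$, $2d\not\equiv 0$ appearing in Proposition~\ref{prop:nec}. What remains is to organize a short case analysis on the residues of the relevant linear forms mod $p$.

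Concretely, I would argue as follows. Given $d\ge 3$ and arbitrary $r,s$, consider first whether $x_1:=2d+3r+2s\equiv 0 \pmod p$. If $x_1\not\equiv 0$, I land in family (1): I then ask whether $y_1:=2d+3r+s-1\equiv 0$; if not, I am in (1a), where the remaining requirement is $z_1:=2d+s-4\not\equiv 0$ — but if instead $z_1\equiv 0$ I fall into case (1a)(ii) (whose hypotheses $d\ge 2$, $2d+s-4\equiv 0$, $2d+3r+2s\not\equiv 0$ are exactly what I have, noting $3(r+1)\not\equiv 0$ is automatic for $p>3$ only if $r+1\not\equiv 0$; here one has to be slightly careful, and I'd observe that the conditions (1a)(i) and (1a)(ii) together cover \emph{all} values of $z_1$ once $x_1,y_1\not\equiv0$, since the extra scalar conditions are vacuous or can be re-examined — see the remark after Proposition~\ref{prop:nec}). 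If $y_1\equiv 0$, I am in (1b), and (1b)(i), (1b)(ii) together cover all $r,s$ once $x_1\not\equiv0$ and $y_1\equiv0$. If on the other hand $x_1\equiv 0$, I am in family (2): again a two-step split on $3r+s$ (or on $s$) mod $p$ shows that (2a)(i), (2a)(ii), (2b)(i), (2b)(ii) jointly exhaust all $(r,s)$, using $d\ge 3\ge 1, 2$ to satisfy the lower bounds on $d$ in every sub-case.

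The heart of the proof is therefore a finite, purely combinatorial verification that the eight condition-sets in Proposition~\ref{prop:nec}, specialized to $d\ge 3$ and $p>3$, form a covering of $\N^2$ in the variables $(r,s)$ — equivalently, that the negations cannot all hold simultaneously. I would present this as a short table or as the nested dichotomy sketched above (on $x_1\bmod p$, then $y_1\bmod p$, then $z_1\bmod p$, mirroring exactly the tree in \S\ref{subsec:comp}), checking at each leaf that the matching item of Proposition~\ref{prop:nec} applies. The main obstacle I anticipate is bookkeeping rather than mathematics: one must be attentive that the lower bounds on $d$ in Proposition~\ref{prop:nec} (some are $d\ge 3$, some $d\ge 2$, one $d\ge 1$) are all satisfied when $d\ge 3$ — which they are — and that the parenthetical scalar-nonvanishing conditions ($3r\not\equiv 0$, $3(r+1)\not\equiv0$, $2d\not\equiv0$) genuinely drop out or get absorbed for $p>3$, which is precisely the content of the remark following Proposition~\ref{prop:nec} that these scalars ``appear to be unnecessary for $p>3$.'' Once that is checked, Lemmas~\ref{lem:rational} and \ref{lem:lf=fd} (already invoked in Proposition~\ref{prop:nec}) give finite-dimensionality, completing the proof.
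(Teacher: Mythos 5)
Your argument is correct, but it is not the route the paper takes as its primary proof. The paper proves Proposition~\ref{prop:d3} in one line from Proposition~\ref{prop:EulerFinite}: for $d\ge 3$ the weight $\la+\rho=(d-\tfrac52)\de+(r+1)\om_1+(s+1)\om_2$ is $G_\oo$-dominant and regular, so the Euler characteristic has top term $e^\la$ and $L(\la)$ is automatically finite dimensional; the case analysis you propose is only mentioned there parenthetically as an alternative. Your covering argument does go through: the point you flag as needing care is settled by the linear identities $y_1-z_1=3(r+1)$, $y_1-z_2=3r$, $x_1-y_2=s$ and $z_3\equiv-(3r+s+3)$ (modulo $x_1\equiv 0$), which show that each ``extra'' scalar condition in Proposition~\ref{prop:nec} is exactly the complementary branch of the dichotomy you are already in, so the eight condition-sets are genuinely exhaustive once $d\ge 3$ guarantees every lower bound on $d$. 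The trade-off between the two proofs is this: the Euler-characteristic argument is shorter and conceptually transparent (it is a regularity/typicality statement, and it is what also handles $d\ge 4$ for $F(3|1)$), but it rests on the cohomological input of Lemma~\ref{lem:Euler} and the geometric hypotheses on $G/B^-$; your argument stays entirely inside the elementary odd-reflection framework of \S\ref{subsec:comp} and Lemmas~\ref{lem:rational}--\ref{lem:lf=fd}, at the cost of a longer bookkeeping verification. I would suggest making the identities above explicit rather than appealing to the remark that the scalars ``appear to be unnecessary,'' since that remark is an observation, not a proof.
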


\begin{proof}
Recall $\rho$ from \eqref{rho}. The proposition now follows by Proposition~\ref{prop:EulerFinite} since $\la+\rho =(d-\frac52) \de +(r+1)\om_1 + (s+1)\om_2$ is in $X^+(T)$. (Alternatively, the proposition also follows from the analysis in \S\ref{subsec:comp}.)
\end{proof}

We then analyze the case when $d=2$.
\begin{prop}
  \label{prop:d=2}
Let $p>3$. The module $L(\la)$ is finite dimensional, for $\la =2 \delta + r \om_1 +s \omega_2 \in X^+(T)$, if and only if one of the following 3 conditions are satisfied:
\begin{enumerate}
\item[(i)]  $s\equiv 0 \pmod p$;
\item[(ii)]  $3r+ s +3 \equiv 0 \pmod p$;
\item[(iii)]  $3r+2s+4\equiv 0 \pmod p$.
\end{enumerate}
\end{prop}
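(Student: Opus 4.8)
The plan is to specialize the general necessary-and-sufficient criterion of Proposition~\ref{prop:nec} to the case $d=2$ and simplify. Setting $d=2$ kills the branches (1a)(i), (2b)(ii) and forces $d\geq 2$ everywhere else to become an equality, so only finitely many of the listed alternatives survive; I would list which of the eight sub-cases (1a)(ii), (1b)(i), (1b)(ii), (2a)(i), (2a)(ii), (2b)(i) remain viable with $d=2$ and rewrite each set of congruences with $d$ replaced by $2$. Since $p>3$, the spurious factors of $2$ and $3$ that Proposition~\ref{prop:nec} carried along for the $p=3$ analysis are now units, so ``$2d\not\equiv 0$'' becomes vacuous, ``$3r\equiv 0$'' becomes ``$r\equiv 0$'', etc. After this substitution the congruences appearing are combinations of $r+s$, $3r+s$, $3r+2s$ shifted by small integers (coming from $2d=4$); the task is to show the disjunction of all surviving cases collapses to the disjunction of the three clean conditions $s\equiv 0$, $3r+s+3\equiv 0$, $3r+2s+4\equiv 0$.

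The key step is a Boolean simplification over $\Z/p$. First I would establish the ``if'' direction: assuming one of (i), (ii), (iii), exhibit an explicit surviving case from Proposition~\ref{prop:nec} (with $d=2$) that it satisfies — for instance $s\equiv 0$ together with a side analysis of whether $2d+3r\equiv 6+3r$ vanishes routes one into (2b)(i) or (2b)(ii); $3r+2s+4\equiv 0$ (which is $2d+3r+2s\equiv 0$ with $d=2$) routes into case (2), and then one checks $3r+s+3\not\equiv 0$ vs $\equiv 0$ to land in (2a)(i) or (2a)(ii), using $s\not\equiv 0$ which follows since otherwise $3r+4\equiv 0$ and $3r+3\not\equiv0$ automatically... — so the bookkeeping here is the real content. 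For the ``only if'' direction I would take each surviving case of Proposition~\ref{prop:nec} at $d=2$ and show its hypotheses imply (i), (ii), or (iii): e.g. case (1a)(ii) with $d=2$ reads $2d+s-4\equiv 0$, i.e. $s\equiv 0$, which is (i) directly; case (1b)(ii) reads $2d+s-1\equiv 0$ i.e. $s\equiv -3$ together with $3r\equiv 0$ i.e. $r\equiv 0$ (as $3$ is a unit), whence $3r+s+3\equiv 0$, which is (ii); and case (2), being $3r+2s+4\equiv 0$, is (iii) on the nose.

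The main obstacle I anticipate is the case-(1a)(ii) versus case-(1b)(i) overlap and making sure no surviving case produces a condition \emph{outside} $\{(i),(ii),(iii)\}$: one must verify, for instance, that case (1b)(i) at $d=2$ — namely $r\not\equiv 0$, $2d+3r+s-1\equiv 0$, $2d+3r+2s\not\equiv 0$ — forces $s\equiv 0$ or one of the others. Indeed $4+3r+s-1\equiv 0$ gives $s\equiv -3-3r$, hence $3r+s+3\equiv 0$, which is (ii); so it is fine, but this kind of substitution has to be done for all surviving branches, and the trap is an arithmetic slip in the constant shifts. I would also double-check consistency against Proposition~\ref{prop:EulerFinite}: any $\la$ with $\la+\rho=(2-\tfrac52)\de+(r+1)\om_1+(s+1)\om_2$ regular should automatically be finite-dimensional, which provides a useful sanity check on the sufficiency direction but does \emph{not} by itself give the full list since $\la+\rho$ is typically singular when $d=2$ (the $\delta$-coefficient $-\tfrac12$ is harmless, but the $G_2$-part $(r+1,s+1)$ can be on a wall); so the odd-reflection analysis of Proposition~\ref{prop:nec} is genuinely needed here and cannot be bypassed.
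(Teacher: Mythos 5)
Your proposal follows the paper's proof exactly: specialize Proposition~\ref{prop:nec} to $d=2$ (the surviving branches are precisely the six you list), rewrite each congruence with $2d=4$, and check both inclusions between the resulting disjunction and (i)--(iii); your sample verifications for (1a)(ii), (1b)(i), (1b)(ii) and case (2) match the paper's. Two small slips in your sketched bookkeeping to fix in the write-up: the relevant constant is $2d+3r=4+3r$ (not $6+3r$), and branch (2b)(ii) is dead for $p>3$ since it requires $2d\equiv 0$, so when $s\equiv 0$ with $r\equiv -1$ (hence $3r+4\not\equiv 0$ and $r+1\equiv 0$) the routing must go through (1b)(i), where $3r+s+3\equiv 0$ and $3r+2s+4\equiv 1\not\equiv 0$, rather than through (1a)(ii) or (2b)(i).
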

The three conditions (i)-(iii) in Proposition~\ref{prop:d=2} are not mutually exclusive. Three mutually exclusive conditions are given in \eqref{eq:1aii+2bi}-\eqref{eq:2ai+2aii} below.

\begin{proof}
Let us set $d=2$ in Proposition~\ref{prop:nec}.

Condition~(1a)(ii) becomes $s\equiv 0, r+1\not\equiv 0, 3r+4 \not\equiv 0$, while Condition~(2b)(i) becomes $s\equiv 0, 3r+4\equiv 0$ (and it follows that $r+1\not\equiv 0$). Hence the combination of Conditions~(1a)(ii) and (2b)(i) gives us the following conditions:
\begin{equation}   \label{eq:1aii+2bi}
s\equiv 0, \qquad   r+1\not\equiv 0.
\end{equation}

Condition~(1b)(i) becomes $r\not\equiv 0, 3r+s+3 \equiv 0, 3r+2s+4 \not\equiv 0$, while Condition~(1b)(ii) becomes $r \equiv 0, 3r+s+3\equiv 0, 3r+2s+4\not\equiv 0$.  Hence the combination of Conditions~(1b)(i)-(ii) gives us the following conditions:
\begin{equation}   \label{eq:1bi+1bii} 
3r+s+3\equiv 0, \qquad  3r+2s+4\not\equiv 0.
\end{equation}

Condition~(2a)(i) becomes $s\not\equiv 0, 3r+2s+4 \equiv 0, 3r+s+3\not\equiv 0$, while Condition~(2a)(ii) becomes $s\not\equiv 0, 3r+2s+4\equiv 0, 3r+s+3\equiv 0$. Hence the combination of Conditions~(2a)(i)-(ii) gives us the following conditions:
\begin{equation}   \label{eq:2ai+2aii} 
3r+2s+4\equiv 0, \qquad   s\not\equiv 0.
\end{equation}
So by Proposition~\ref{prop:nec}, $L(\la)$ is finite dimensional, for $\la =2 \delta + r \om_1 +s \omega_2 \in X^+(T)$, if and only if one of the 3 (mutually exclusive) conditions \eqref{eq:1aii+2bi}, \eqref{eq:1bi+1bii}, \eqref{eq:2ai+2aii} holds.

Let us show that Conditions \eqref{eq:1aii+2bi}-\eqref{eq:2ai+2aii} are equivalent to Conditions (i)-(iii) in the proposition. Clearly if $r,s$ satisfy one of Conditions \eqref{eq:1aii+2bi}-\eqref{eq:2ai+2aii}, then they satisfy one of Conditions (i)-(iii).
On the other hand, if $r,s$ satisfy Condition~(i) but not \eqref{eq:1aii+2bi}, that is, $s\equiv   r+1\equiv 0$, then \eqref{eq:1bi+1bii} is satisfied. If $r,s$ satisfy Condition~(ii) but not \eqref{eq:1bi+1bii}, that is, $3r+s+3\equiv  3r+2s+4\equiv 0$, then \eqref{eq:2ai+2aii} is satisfied. Finally, if $r,s$ satisfy Condition~(iii) but not \eqref{eq:2ai+2aii}, that is, $s\equiv  3r+2s+4\equiv 0$, then \eqref{eq:1aii+2bi} is satisfied.

The proof of Proposition~\ref{prop:d=2} is completed.
\end{proof}

We finally analyze the case when $d=1$.
\begin{prop}
  \label{prop:d=1}
Let $p>3$. The module $L(\la)$ is finite dimensional, for $\la = \delta + r \om_1 +s \omega_2 \in X^+(T)$, if and only if one of the following 2 conditions are satisfied:
\begin{enumerate}
\item[(i)]  $s-1\equiv  3r+4 \equiv 0 \pmod p$;
\item[(ii)]  $s \equiv 3r+2 \equiv 0 \pmod p$.
\end{enumerate}
\end{prop}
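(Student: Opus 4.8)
The plan is to argue in complete parallel with the proof of Proposition~\ref{prop:d=2}: specialize the criterion of Proposition~\ref{prop:nec} to $d=1$, discard the branch conditions that become vacuous or impossible under the standing hypotheses $d=1$ and $p>3$, and rewrite the ones that survive. (The Euler-characteristic shortcut of Proposition~\ref{prop:EulerFinite} is unavailable here: for $\la=\delta+r\om_1+s\om_2$ the weight $\la+\rho=-\tfrac{3}{2}\delta+(r+1)\om_1+(s+1)\om_2$ is not $\Gev$-dominant, so one genuinely needs the odd-reflection bookkeeping packaged in Proposition~\ref{prop:nec}.)

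Setting $d=1$ in Proposition~\ref{prop:nec}, Conditions (1a)(i), (1a)(ii), (1b)(i), (1b)(ii) and (2a)(i) all carry the hypothesis $d\ge 2$ (or $d\ge 3$) and so hold for no weight with $d=1$, while Condition (2b)(ii) requires $2d\equiv 0\pmod p$, which is impossible for $d=1$ since $p>2$. Hence, for $d=1$, the module $L(\la)$ is finite dimensional if and only if one of Conditions (2a)(ii) and (2b)(i) holds.

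It then remains to rewrite these two. For (2b)(i) with $d=1$: $2d\not\equiv 0$ is automatic as $p>2$, and $2d+3r\equiv 0$ becomes $3r+2\equiv 0$, so (2b)(i) collapses to $s\equiv 0$ and $3r+2\equiv 0$, i.e., Condition~(ii). For (2a)(ii) with $d=1$: the surviving congruences are $s\not\equiv 0$, $3r+2s+2\equiv 0$ and $3r+s+3\equiv 0$; subtracting the last two yields $s-1\equiv 0$, which makes $s\not\equiv 0$ automatic (as $1\not\equiv 0\pmod p$), and then $3r+s+3\equiv 0$ reads $3r+4\equiv 0$, while conversely $s-1\equiv 0$ and $3r+4\equiv 0$ recover all three congruences; this is Condition~(i). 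Finally (i) and (ii) are mutually exclusive, since the first forces $s\equiv 1$ and the second $s\equiv 0$; combining this with the ``if and only if'' of Proposition~\ref{prop:nec} proves the proposition.

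I do not anticipate a real obstacle; the content is entirely bookkeeping. The one point that warrants a second look is whether the bounds ``$d\ge 2$'' appearing in Proposition~\ref{prop:nec} might be artifacts that could be relaxed to ``$d\ge 1$''---equivalently, whether any of the corresponding leaves of the odd-reflection tree of \S\ref{subsec:comp} could yield a $\Gev$-dominant weight $\la^3$ when $d=1$. For branch (1b)(ii) this is exactly what the remark after Proposition~\ref{prop:nec} records, and for the other $d\ge 2$ branches the incompatibility with the accompanying congruences at $d=1$ is immediate, so nothing is lost in the specialization.
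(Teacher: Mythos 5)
Your proposal is correct and follows essentially the same route as the paper: specialize Proposition~\ref{prop:nec} to $d=1$, observe that only Cases (2a)(ii) and (2b)(i) can occur, and rewrite their congruences as Conditions (i) and (ii). Your extra algebra making the equivalence of (2a)(ii) with Condition (i) explicit, and the closing sanity check on the $d\ge 2$ bounds, only elaborate on steps the paper treats as immediate.
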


\begin{proof}
Let us set $d=1$ in Proposition~\ref{prop:nec}. The case $d=1$ only occurs in Cases (2a)(ii) and (2b)(i).
Condition~(2a)(ii) reads $s\not\equiv 0, 3r+2s+2\equiv 0, 3r+s+3\equiv 0$, which is clearly equivalent to (i) in the proposition.
Condition~ (2b)(i) is the same as (ii) above.
\end{proof}

Summarizing Propositions~\ref{prop:nec}, \ref{prop:d3}, \ref{prop:d=2} and \ref{prop:d=1} (and recalling Proposition~\ref{prop:equiv}, Lemma~\ref{lem:simple}), we have established the following.

\begin{thm}  \label{thm:G3}
Let $p>3$. Let $G$ be the supergroup of type $G(3)$.
 A complete list of inequivalent simple $G$-modules consists of $L(\la)$, where $\la = d \delta + r \om_1 +s \omega_2$, with $d,r,s \in \N$, such that one of the following conditions is satisfied:
\begin{enumerate}
\item $d=0$, and ${3} r\equiv s\equiv 0 \pmod p$.

\item
$d=1$, and $r,s$ satisfy either of (i)-(ii) below:
\begin{enumerate}
\item[(i)] $s-1\equiv  3r+4 \equiv 0 \pmod p$;
\item[(ii)] $s \equiv 3r+2 \equiv 0 \pmod p$.
\end{enumerate}

\item
$d=2$, and $r,s$ satisfy either of (i)-(iii) below:
\begin{enumerate}
\item[(i)] $s\equiv 0 \pmod p$;
\item[(ii)] $3r+ s +3 \equiv 0 \pmod p$;
\item[(iii)] $3r+2s+4\equiv 0 \pmod p$.
\end{enumerate}

\item $d\ge 3$, (and $r,s\in \N$ are arbitrary).
\end{enumerate}
\end{thm}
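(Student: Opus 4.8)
The plan is to derive Theorem~\ref{thm:G3} as a more transparent repackaging of the four technical cases $d=0$, $d=1$, $d=2$, $d\ge 3$, using Propositions~\ref{prop:nec}, \ref{prop:d3}, \ref{prop:d=2}, and \ref{prop:d=1} together with the equivalence $G\text{-}\mathfrak{mod}\simeq(\DG,T)\text{-}\mathfrak{mod}$ of Proposition~\ref{prop:equiv} and the description of simple objects in Lemma~\ref{lem:simple}. By those two results, classifying simple $G$-modules amounts to determining the set $X^\dagger(T)$ of $\la\in X^+(T)$ for which $L(\la)$ is finite dimensional, and $X^+(T)$ for $G(3)$ was already identified as $\{n\delta+r\om_1+s\om_2\mid n,r,s\in\N\}$. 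So the whole statement is: assemble the finiteness criterion, sorted by the value of $d$.

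First I would dispose of $d\ge 3$: this is exactly Proposition~\ref{prop:d3}, giving clause (4) with no constraint on $r,s$. Next, for $d=2$, I would quote Proposition~\ref{prop:d=2}, whose conditions (i)--(iii) are precisely clause (3); and for $d=1$, Proposition~\ref{prop:d=1}, whose conditions (i)--(ii) are precisely clause (2). The only case that still needs a small argument is $d=0$. Here I would return to Proposition~\ref{prop:nec} and set $d=0$: inspecting the eight sub-cases, the value $d=0$ is compatible only with case (2b)(ii), namely $2d\equiv 3r\equiv s\equiv 0\pmod p$; since $p>3$, $2d\equiv 0$ is automatic when $d=0$, so this collapses to $3r\equiv s\equiv 0\pmod p$, which is clause (1). (One should note in passing that cases (2b)(i) and (2a)(i)--(ii) carry $2d\not\equiv 0$ or $s\not\equiv 0$ and hence cannot hold for $d=0$, and the remaining cases all require $d\ge 1$ or $d\ge 2$.) Finally, since Propositions~\ref{prop:d=2} and \ref{prop:d=1} already state their criteria as iff statements, and Proposition~\ref{prop:nec} is an iff statement, the union over $d\in\N$ of the four clauses is an exact description of $X^\dagger(T)$; that the listed $L(\la)$ are pairwise non-isomorphic is built into the fact that $L(\la)\not\cong L(\mu)$ for $\la\ne\mu$ recorded right after the definition of the Verma modules.

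I do not expect a genuine obstacle here, since all the substantive work — tracking highest weights through the three odd reflections $r_{\beta_1},r_{\beta_2},r_{\beta_3}$ and using Lemmas~\ref{lem:rational} and \ref{lem:lf=fd} to upgrade ``locally finite over $\DGev$'' to ``finite dimensional'' — has been carried out in \S\ref{subsec:comp} and Proposition~\ref{prop:nec}. The only point requiring care is bookkeeping: verifying that the ostensibly mutually-exclusive reformulations \eqref{eq:1aii+2bi}--\eqref{eq:2ai+2aii} for $d=2$ genuinely cover exactly Conditions (i)--(iii), and likewise that the $d=0$ reduction leaves no stray sub-case of Proposition~\ref{prop:nec}; both are short finite checks modulo $p$ using $p>3$ (so that the scalars $2,3$ are invertible), exactly as flagged in the remark following Proposition~\ref{prop:nec}. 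Hence the proof is essentially: \emph{combine Propositions~\ref{prop:nec}, \ref{prop:d3}, \ref{prop:d=2}, \ref{prop:d=1}, reduce the $d=0$ case of Proposition~\ref{prop:nec} by hand, and invoke Proposition~\ref{prop:equiv} and Lemma~\ref{lem:simple}.}
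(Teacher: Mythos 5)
Your proposal is correct and follows essentially the same route as the paper, which proves the theorem precisely by summarizing Propositions~\ref{prop:nec}, \ref{prop:d3}, \ref{prop:d=2} and \ref{prop:d=1} together with Proposition~\ref{prop:equiv} and Lemma~\ref{lem:simple}. Your explicit reduction of the $d=0$ case to subcase (2b)(ii) of Proposition~\ref{prop:nec} is accurate and merely spells out a step the paper leaves implicit.
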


\begin{rem}
Theorem~\ref{thm:G3} makes sense over $\C$, providing an odd reflection approach to the classification of finite-dimensional simple modules over $\C$ (due to \cite{Kac77}; also cf. \cite{Ma14}).  Indeed this classification can be read off from Theorem~\ref{thm:G3} (by regarding $p=\infty$) as follows.
{\it The $\g$-modules $L(\la)$ over $\C$ is finite dimensional if and only $\la = d \delta + r \om_1 +s \omega_2$, for  $d, r, s \in \N$, satisfies one of the 3 conditions:
$
(1) \; d=r=s=0; \;
(2) \; d=2, s=0; \;
(3) \; d \ge 3. $
}
\end{rem}

\subsection{Simple modules for the supergroup $G(3)$ for $p=3$}

  \label{sec:p=3}

The assumption $p>3$ is not really necessary for the definition of $G$ and classification of simple $G$-modules. The (less polished) conditions in Proposition~\ref{prop:nec} remain valid for $p=3$. When one works it through, it turns out to be the same as setting $p=3$ in Theorem~\ref{thm:G3}; note the scalar $3$ in (1) therein. We summarize this in the following.

\begin{thm}
  \label{thm:p=3}
Let $p=3$. Let $G$ be the supergroup of type $G(3)$.
 A complete list of inequivalent simple $G$-modules consists of $L(\la)$, where $\la = d \delta + r \om_1 +s \omega_2$, with $d,r,s \in \N$, such that one of the following conditions is satisfied:
\begin{enumerate}
\item $d=0$, and $s\equiv 0 \pmod 3$;

\item
$d=2$, $s\equiv 0$ or $1 \pmod 3$;

\item
$d\ge 3$, (and $r,s\in \N$ are arbitrary).
\end{enumerate}
\end{thm}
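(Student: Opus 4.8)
The plan is to reuse the machinery already assembled for $p>3$, noting that nothing in Propositions~\ref{prop:nec}, \ref{prop:d3}, \ref{prop:d=2}, \ref{prop:d=1} forces $p>3$: the conditions in Proposition~\ref{prop:nec} are deliberately stated without dividing by the scalars $2$ or $3$, and Proposition~\ref{prop:d3} depends only on the Euler characteristic argument (Proposition~\ref{prop:EulerFinite}), which requires only $p>2$ for $G(3)$ to admit a non-degenerate even bilinear form --- and here one must check that the construction of $G$ in \cite{FG12} still goes through at $p=3$, or more precisely that the equivalence in Proposition~\ref{prop:equiv} and Lemma~\ref{lem:simple} hold. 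So the first step is to remark that all the reductions of \S\ref{subsec:comp} and Proposition~\ref{prop:nec} remain literally valid at $p=3$, and that case $d\ge 3$ is disposed of by Proposition~\ref{prop:d3} verbatim, giving condition (3).

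Next I would specialize the six families of conditions in Proposition~\ref{prop:nec} to $p=3$ and simplify the scalars. At $p=3$ every expression of the form $3r$ or $3(r+1)$ is $\equiv 0$, so the conditions $3r\not\equiv 0$ and $3(r+1)\not\equiv 0$ become \emph{false} and kill the cases (1a)(i), (1a)(ii), (1b)(i) outright; similarly $3r\equiv 0$ is automatic and drops out of (1b)(ii), (2b)(i), (2b)(ii). For the $d\ge 2$ branch this leaves (1b)(ii) requiring $2d+s-1\equiv 0$ and $2d+3r+2s\equiv 2d+2s\not\equiv 0$, i.e. $2d+2s\not\equiv 0$ with $2d+s\equiv 1$; and for $d=0$ one is left with (2b)(i) giving $s\equiv 0$, $2d\not\equiv 0$ impossible (since $2d=0$), and (2b)(ii) giving $s\equiv 0$ with no further constraint --- so condition (1) of the theorem, $d=0$ and $s\equiv 0\pmod 3$, reading off from (2b)(ii). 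One then has to rule out $d=0$ from the remaining families (2a)(i),(2a)(ii) since those carry $s\not\equiv 0$ but also $2d+3r+2s\equiv 2s\equiv 0$, forcing $s\equiv 0$, a contradiction --- so the $d=0$ picture is exactly $s\equiv 0$.

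The remaining work is the case $d=2$, which should reproduce "(2): $d=2$, $s\equiv 0$ or $1\pmod 3$''. Setting $d=2$ and $p=3$: from the $d\ge 2$ families that survive we get (1a)(ii) dead, (1b)(i) dead, (1b)(ii) surviving as $2d+s-1\equiv s\equiv 0$ together with $2d+2s\equiv 1+2s\not\equiv 0$, i.e. $s\equiv 0$ (and then $1+0\equiv 1\not\equiv 0$ automatically) --- so $s\equiv 0$ is allowed. From the $d=2$ subcase of (2a)(i)-(ii) we need $2d+3r+2s\equiv 1+2s\equiv 0$, i.e. $2s\equiv 2$, i.e. $s\equiv 1$; the auxiliary constraints $3r+s+3\equiv s$, $3r+2s+4\equiv 2s+1$ are then either automatically satisfied or vacuous after the $p=3$ reduction, so $s\equiv 1$ is allowed. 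From (2b)(i) with $d=2$ we need $2d+3r\equiv 1\equiv 0$, impossible; (2b)(ii) needs $2d\equiv 1\equiv 0$, impossible. Hence for $d=2$ the module $L(\la)$ is finite dimensional iff $s\equiv 0$ or $s\equiv 1\pmod 3$, which is condition (2). Finally I would invoke Proposition~\ref{prop:equiv} and Lemma~\ref{lem:simple} to translate "finite-dimensional $L(\la)$'' into "simple $G$-module'', completing the classification.

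The main obstacle I anticipate is not the combinatorial bookkeeping --- which is a finite and mechanical specialization of Proposition~\ref{prop:nec} --- but rather the foundational point flagged implicitly at the start of \S\ref{sec:p=3}: one must be sure that the supergroup $G$ of type $G(3)$ and the category equivalence of Proposition~\ref{prop:equiv} genuinely survive at $p=3$, since the standing hypothesis of the paper was $p>3$ for $G(3)$. The key fact to check is that the Chevalley basis hypothesis of \cite[Theorem 2.8]{SW08} and the constructions of \cite{FG12, G14} are still available, and that the non-degeneracy of the even form (used only in \S\ref{sec:general} for the Euler characteristic) is not actually needed for the $d\ge 3$ conclusion at $p=3$ --- indeed Proposition~\ref{prop:d3} can be re-derived purely from the odd-reflection analysis of \S\ref{subsec:comp} as that proof parenthetically notes, so the argument is self-contained even where the bilinear form degenerates. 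Once that is granted, the theorem follows by the pattern above with no new ideas.
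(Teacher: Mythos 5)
Your approach is exactly the paper's: the official proof of Theorem~\ref{thm:p=3} is the one-line remark that the unsimplified conditions of Proposition~\ref{prop:nec} remain valid at $p=3$ and that ``working it through'' yields the stated list, so your explicit specialization of those conditions is precisely the computation the paper leaves to the reader, and the parts you carry out ($d=0$, $d=2$, $d\ge 3$) are correct. Two points need attention, though. First, your case analysis omits $d=1$ entirely; since the theorem asserts a \emph{complete} list and no $d=1$ weights appear in it, you must check that the only families of Proposition~\ref{prop:nec} admitting $d=1$, namely (2a)(ii), (2b)(i) and (2b)(ii), all die at $p=3$ (for (2a)(ii) the congruences $2+2s\equiv 0$ and $3r+s+3\equiv s\equiv 0$ force $s\equiv 2$ and $s\equiv 0$ simultaneously; for (2b)(i)--(ii) one needs $2d=2\equiv 0\pmod 3$, which fails). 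This is mechanical within your framework but cannot be skipped. Second, your claim that (1a)(i) is ``killed outright'' by $3r\not\equiv 0$ is wrong --- (1a)(i) contains no such condition and survives at $p=3$ as $d\ge 3$, $2(d+s)\not\equiv 0$, $2d+s\not\equiv 1$; this slip is harmless only because you independently dispose of all of $d\ge 3$ via Proposition~\ref{prop:d3} (or, as you note, via the odd-reflection analysis directly), but the statement as written should be corrected. Your foundational caveat about the existence of $G$ and the category equivalence at $p=3$ matches what the paper itself merely asserts without proof, so you are not missing anything the paper supplies.
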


\section{Modular representations of the supergroup of type $F(3|1)$}
  \label{sec:F}

We assume the characteristic of the ground field $k$ is $p>3$ in this section.

\subsection{Weights and roots for $F(3|1)$}

Let $\g=\g_\oo\oplus\g_\one$ be the exceptional simple Lie superalgebra $F(3|1)$ (which is sometimes denoted by $F(4)$ in the literature). We have $\g_\oo \cong \sll\oplus\mathfrak{so}_7 $ and $\g_\one \cong k^2 \boxtimes k^8$ as $\g_\oo$-module,
where $k^8$ here is the $8$-dimensional spin representation of $\mathfrak{so}_7$. The root system of $\g$ can be described via the basis $\{\epsilon_1,\epsilon_2,\epsilon_3,\delta\}$ in $\h^*\cong \bbc^4$ with a non-degenerate bilinear form $(\cdot, \cdot)$ as follows:
\begin{align}\label{bilinear form}
(\delta,\delta)=-3, (\delta,\epsilon_i)=0, (\epsilon_i,\epsilon_i)=1,  (\epsilon_i,\epsilon_j)=0,\quad  i,j=1,2,3, i\ne j.
\end{align}
 The root system $\Phi=\Phi_\oo\cup \Phi_\one$ is as below:
$$\Phi_\oo=\{\pm\delta;\pm\epsilon_i\pm\epsilon_j;\pm\epsilon_i\mid i,j=1,2,3,i\ne j \};\;\;
\Phi_\one=\{{1\over 2}(\pm\delta\pm \epsilon_1\pm\epsilon_2\pm\epsilon_3) \}$$
The standard Borel subalgebra $\bbb$ corresponds to the simple root system
$$\Pi=\big\{\alpha_1:=\epsilon_1-\epsilon_2, \; \alpha_2:=\epsilon_2-\epsilon_2,\; \alpha_3:=\epsilon_3, \;\alpha_4:={1\over 2}(\delta-\epsilon_1-\epsilon_2-\epsilon_3) \big\}.$$
The fundamental weights of $\g_\oo$ associated with the $\g_\oo$-simple roots $\alpha_1,\alpha_2,\alpha_3,\delta$ are:
$$\omega_1 :=\epsilon_1,\quad
 \omega_2 :=\epsilon_1+\epsilon_2,\quad
\omega_3 :={1\over 2}(\epsilon_1+\epsilon_2+\epsilon_3),\quad
\omega_4 :={1\over 2}\delta.$$
Denote the weight lattice by
\[
X =\{\la =a\omega_1 +b\omega_2 +c\omega_3 +d\omega_4 ~|~a,b,c,d\in \Z\}.
\]
Sometimes we simply denote $\la=a\omega_1 +b\omega_2 +c\omega_3 +d\omega_4 \in X$  as
\[
\la=(a,b,c,d)\in \Z^4.
\]
With respect to $\bbb$, the Weyl vector $\rho$ can be expressed in terms of the fundamental weights as
\begin{align*}
\rho=\omega_1+\omega_2+\omega_3-3\omega_4.
\end{align*}

The Dynkin diagram associated to $\Pi$ is depicted as follows:
\vspace{2mm}
\begin{center}
\begin{tikzpicture}
\node at (0,0.5) {$\bigcirc$};
\draw (0.16,0.5)--(1.15,0.5);
\node at (1.35,0.5) {$\bigcirc$};
\node at (2,0.5) {\Large $>$};
\draw (1.5,0.55)--(2.5,0.55);
\draw (1.5,0.45)--(2.5,0.45);
\node at (2.7,0.5) {$\bigcirc$};
\draw (2.9,0.5)--(3.85,0.5);
\node at (4.05,0.5) {$\bigotimes$};
\node at (0,0) {\tiny $\ep_1-\ep_2$};
\node at (1.3,0) {\tiny $\ep_2-\ep_3$};
\node at (2.7,0) {\tiny $\ep_3$};
\node at (4.6,0) {\tiny $\hf(\delta-\ep_1-\ep_2-\ep_3)$};
\node at (-1.8,0.5) { $\Pi$:};
\end{tikzpicture}
\end{center}

From (\ref{bilinear form}), we have
\begin{align}
\label{bracet for fundmental weight}
 \begin{split}
&(\omega_1,\epsilon_1)=1, \; (\omega_1,\epsilon_2)=0,\;(\omega_1,\epsilon_3)=0,\; (\omega_1,\delta)=0,\cr
&(\omega_2,\epsilon_1)=1, \; (\omega_2,\epsilon_2)=1,\;(\omega_2,\epsilon_3)=0,\; (\omega_2,\delta)=0,\cr
&(\omega_3,\epsilon_1)={1\over 2}, \; (\omega_3,\epsilon_2)={1\over 2},\;(\omega_3,\epsilon_3)={1\over 2},\; (\omega_3,\delta)=0,\cr
&(\omega_4,\epsilon_1)=0, \; (\omega_4,\epsilon_2)=0,\;(\omega_4,\epsilon_3)=0,\; (\omega_4,\delta)=-{3\over 2}.
 \end{split}
\end{align}


Denote the positive odd roots for $F(3|1)$ by
\begin{align*}
\ga_1 &={1\over 2}(\delta-\epsilon_1-\epsilon_2-\epsilon_3), \quad
\ga_2={1\over 2}(\delta-\epsilon_1-\epsilon_2+\epsilon_3), \quad
\ga_3={1\over 2}(\delta-\epsilon_1+\epsilon_2-\epsilon_3),
\\
\ga_4 &={1\over 2}(\delta-\epsilon_1+\epsilon_2+\epsilon_3), \quad
\ga_5={1\over 2}(\delta+\epsilon_1-\epsilon_2-\epsilon_3).
\end{align*}
In terms of the fundamental weights, we can reexpress the odd roots $\ga_i$ as follows:
\begin{align*}
\gamma_1&={1\over 2}\delta - \omega_3,\quad \gamma_2={1\over 2}\delta - \omega_2+\omega_3,\quad \gamma_3={1\over 2}\delta - \omega_1+\omega_2-\omega_3,\cr
\gamma_4&={1\over 2}\delta - \omega_1+\omega_3,\quad \gamma_5={1\over 2}\delta + \omega_1-\omega_3.
\end{align*}
Besides the conjugate class of the standard simple system $\Pi^0:=\Pi=\{\epsilon_1-\epsilon_2, \epsilon_2-\epsilon_2,\epsilon_3,\ga_1\}$ there are five other conjugate classes of simple systems under the Weyl group action as listed below. They all are obtained via sequences of odd reflections from $\Pi^0$ (cf.  \cite[\S1.4]{CW12}):
\begin{align}
\label{rep of simple roots}
\begin{split}
&\Pi^1=r_{\ga_1}(\Pi^0)=\{\epsilon_1-\epsilon_2,\epsilon_2-\epsilon_3,\ga_2,-\ga_1\},\cr
&\Pi^2=r_{\ga_2}(\Pi^1)=\{\epsilon_1-\epsilon_2,\ga_3,-\ga_2,\epsilon_3\}, \cr
&\Pi^3=r_{\ga_3}(\Pi^2)=\{\ga_5, -\ga_3,\epsilon_2-\epsilon_3,\ga_4\}, \cr
&\Pi^4=r_{\ga_4}(\Pi^3)=\{\delta, \epsilon_3,\epsilon_2-\epsilon_3,-\ga_4\}, \cr
&\Pi^5=r_{\ga_5}(\Pi^3)=\{-\ga_5, \epsilon_1-\epsilon_2,\epsilon_2-\epsilon_3,\delta\}.
\end{split}
\end{align}
Their corresponding Dynkin diagrams are listed as follows:
\begin{center}
\begin{tikzpicture}
\node at (0,0.5) {$\bigcirc$};
\draw (0.2,0.5)--(1.15,0.5);
\node at (1.35,0.5) {$\bigcirc$};
\draw (1.52,0.5)--(2.52,0.5);
\node at (2.7,0.5) {$\bigotimes$};
\draw (2.9,0.5)--(3.85,0.5);
\node at (4.05,0.5) {$\bigotimes$};
\node at (0,0) {\tiny $\ep_1-\ep_2$};
\node at (1.3,0) {\tiny $\ep_2-\ep_3$};
\node at (2.7,0) {\tiny $\ga_2$};
\node at (4,0) {\tiny $-\ga_1$};
\node at (2.3,-.8) { $\Pi^1$};
\node at (6,0.5) {$\bigcirc$};
\draw (6.2,0.5)--(7.155,0.5);
\node at (7.35,0.5) {$\bigotimes$};
\draw (7.52,0.5)--(8.53,1.18);
\draw (7.52,0.5)--(8.52,-.17);
\draw (8.68,1.1)--(8.68,-.12);
\node at (8.7,1.3) {$\bigcirc$};
\node at (8.7,-.3) {$\bigotimes$};
\node at (6,0) {\tiny $\ep_1-\ep_2$};
\node at (7.3,0) {\tiny $\ga_3$};
\node at (8.6,-0.8) {\tiny $-\ga_2$};
\node at (8.7,1.8) {\tiny $\ep_3$};
\node at (7.3,-.8) { $\Pi^2$};
\node at (11,0.5) {$\bigcirc$};
\draw (11.2,0.5)--(12.155,0.5);
\node at (12.35,0.5) {$\bigotimes$};
\draw (12.52,0.5)--(13.53,1.18);
\draw (12.52,0.5)--(13.52,-.17);
\draw (13.68,1.1)--(13.68,-.12);
\node at (13.7,1.3) {$\bigotimes$};
\node at (13.7,-.3) {$\bigotimes$};
\node at (11,0) {\tiny $\ep_2-\ep_3$};
\node at (12.3,0) {\tiny $-\ga_3$};
\node at (13.7,-0.8) {\tiny $\ga_5$};
\node at (13.7,1.8) {\tiny $\ga_4$};
\node at (12.3,-.8) { $\Pi^3$};
\end{tikzpicture}
\end{center}
\bigskip

\begin{center}
\begin{tikzpicture}
\node at (0,0.5) {$\bigcirc$};
\draw (0.2,0.5)--(1.15,0.5);
\node at (1.35,0.5) {$\bigotimes$};
\draw (1.52,0.5)--(2.52,0.5);
\node at (2.7,0.5) {$\bigcirc$};
\draw (2.9,0.55)--(3.85,0.55);
\draw (2.9,0.45)--(3.85,0.45);
\node at (4.05,0.5) {$\bigcirc$};
\node at (0,0) {\tiny $\delta$};
\node at (1.3,0) {\tiny $-\ga_4$};
\node at (2.7,0) {\tiny $\ep_3$};
\node at (4,0) {\tiny $\ep_2-\ep_3$};
\node at (3.4,0.5) {\Large $<$};
\node at (2.3,-.5) { $\Pi^4$};
\node at (6,0.5) {$\bigcirc$};
\draw (6.2,0.5)--(7.15,0.5);
\node at (7.35,0.5) {$\bigotimes$};
\draw (7.52,0.5)--(8.52,0.5);
\node at (8.7,0.5) {$\bigcirc$};
\draw (8.9,0.5)--(9.85,0.5);
\node at (10.05,0.5) {$\bigcirc$};
\node at (6,0) {\tiny $\delta$};
\node at (7.3,0) {\tiny $-\ga_5$};
\node at (8.7,0) {\tiny $\ep_1-\ep_2$};
\node at (10,0) {\tiny $\ep_2-\ep_3$};
\node at (8.3,-.5) { $\Pi^5$};
\end{tikzpicture}
\end{center}
The corresponding positive systems are denoted by $\Phi^{i+}$, for $0\le i \le 5$, with  $\Phi^{0+}=\Phi^+$, and the corresponding Borel subalgebras of $\g$ are denoted by $\bbb^i$.

\subsection{Constraints on highest weights}

Let $G$ be the simply connected algebraic supergroup of type $F(3|1)$ whose even subgroup is $SL_2(k) \times \text{Spin}_7(k)$. With respect to the standard Borel subalgebra $\bbb$  (associated to $\Phi^+$), we have
\[
X^+(T) =\{ \la=a\omega_1 +b\omega_2 +c\omega_3 +d\omega_4 \in X~|~a,b,c,d\in \N\}.
\]
Denote the simple $\DG$-module of highest weight $\la$ by $L(\la)$, where $\la \in X^+(T)$.  Assume that the simple module $L(\la) =L^\bbb(\la)$ has $\bbb^{i}$-highest weight $\la^{i}$, for $0\le i\le 5$, where we have set $\la^0=\la, \bbb^0=\bbb$.

\subsubsection{The cases of $d\ge 4$ and $d=0$}

\begin{lem}
  \label{lem:zero}
For any fixed  $0\le i\le 3$,  assume the module $L^{\bbb^i}(\la^{i})$ is finite dimensional and $\la^{i}$ is of the form $(x,y,z,0)$. Let $j=i+1$ if $i\le 2$, and let $j=4$ or $5$ if $i=3$. Then
\[
(\la^i, \ga_j) \equiv 0 \pmod p, \qquad \text{ and }\quad  \la^j =\la^i.
\]
\end{lem}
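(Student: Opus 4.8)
The plan is to compute the pairing $(\la^i,\ga_j)$ explicitly in terms of the coordinates $(x,y,z,0)$ using the bilinear form \eqref{bilinear form} (equivalently the table \eqref{bracet for fundmental weight}), observe that the $d$-coordinate is $0$ by hypothesis so that the term $(\la^i,\tfrac12\delta)$ vanishes, and thereby reduce $(\la^i,\ga_j)$ to a pairing of $\la^i$ with a half-sum of $\pm\ep_k$'s. The key point will be that for each of the relevant $j$ (namely $j=i+1$ for $i\le 2$, and $j=4,5$ for $i=3$), the corresponding odd root $\ga_j$ is a \emph{simple root} of the simple system $\Pi^j$ (this is exactly how the $\Pi^j$ were constructed via the odd reflections $r_{\ga_j}$ in \eqref{rep of simple roots}). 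Since $L^{\bbb^i}(\la^i)$ is assumed finite dimensional and $\la^i$ is its highest weight with respect to $\Pi^i$ — or rather, after passing through the single odd reflection $r_{\ga_j}$, with respect to $\Pi^j$ — the $G_\oo$-integral-dominance forced by finite dimensionality (Lemma~\ref{lem:simple}, via Lemmas~\ref{lem:rational} and \ref{lem:lf=fd}) constrains $\la^j$; and crucially $\la^j$ and $\la^i$ differ by either $0$ or $\ga_j$ according to Lemma~\ref{lem:oddref}.

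Concretely, I would argue as follows. First, if $(\la^i,\ga_j)\not\equiv 0\pmod p$, then by Lemma~\ref{lem:oddref} we have $\la^j =\la^i -\ga_j$. Now inspect the $\delta$-coefficient (equivalently the $\om_4$-coefficient): each $\ga_j$ has the form $\tfrac12\delta + (\text{integral combination of the }\ep_k)$, so subtracting $\ga_j$ decreases the $\delta$-coefficient of $\la^i$ — which is $0$ by hypothesis — to $-\tfrac12$, i.e. the $\om_4$-coefficient of $\la^j$ becomes $-1$. But $\la^j\in X^+(T)$ is required for $L^{\bbb^j}(\la^j)=L^{\bbb^i}(\la^i)$ to be finite dimensional (here one uses that $\Pi^j$ shares the same even simple roots $\ep_1-\ep_2,\ep_2-\ep_3,\ep_3$ or $\delta$ with the standard even system, so $G_\oo$-dominance is read off the $\om_1,\om_2,\om_3,\om_4$-coordinates in the usual way); a negative $\om_4$-coordinate contradicts dominance for the $\mathfrak{sl}_2$-factor attached to $\delta$. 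Hence $(\la^i,\ga_j)\equiv 0\pmod p$, and then Lemma~\ref{lem:oddref} gives $\la^j=\la^i$ directly.

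The main obstacle — really a bookkeeping subtlety rather than a deep difficulty — is making precise that finite-dimensionality of $L^{\bbb^i}(\la^i)$ indeed forces $\la^j\in X^+(T)$ with the \emph{same} notion of dominance used for $\om_4$, i.e. that the even part of $\bbb^j$ is $\g_\oo$-conjugate to the standard one in a way compatible with the coordinates $(a,b,c,d)$; this is where one must check that the even roots in each $\Pi^j$ listed in \eqref{rep of simple roots} really do form the standard simple system of $\g_\oo = \sll \oplus \mathfrak{so}_7$ (one sees $\{\ep_1-\ep_2,\ep_2-\ep_3,\ep_3,\delta\}$ reappearing, up to the split into the two ideals), and invoke Lemma~\ref{lem:rational}/Lemma~\ref{lem:lf=fd} to conclude local finiteness over $\DGev$ hence $G_\oo$-dominance of $\la^j$. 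Once that is in place the argument is a one-line sign check on the $\om_4$-coordinate, carried out uniformly for $i=0,1,2$ and for the two choices $j=4,5$ when $i=3$.
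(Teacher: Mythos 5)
Your proposal is correct and follows essentially the same route as the paper: assume $(\la^i,\ga_j)\not\equiv 0$, apply Lemma~\ref{lem:oddref} to get $\la^j=\la^i-\ga_j$ with $\om_4$-coordinate $-1$, and derive a contradiction with $\la^j\in X^+(T)$; the case $(\la^i,\ga_j)\equiv 0$ then gives $\la^j=\la^i$ directly. The extra bookkeeping you flag about dominance being read off the same coordinates is fine but not dwelt on in the paper's (shorter) argument.
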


\begin{proof}
The second equality is an immediate consequence of the first one by Lemma \ref{lem:oddref}.

Assume that $(\la^i, \ga_j) \not \equiv 0.$ Then, by applying the odd reflection $r_{\ga_j}$ and Lemma \ref{lem:oddref}, we have $L^{\bbb^i}(\la^{i}) = L^{\bbb^j}(\la^{j})$, where $\la^{j} = \la^{i} -\ga_j$ is of the form $(*, *, *, -1)$. But then $L^{\bbb^j}(\la^{j})$ cannot be finite dimensional due to the fact $\la^{j} \not \in X^+(T)$, which is a contradiction.
\end{proof}

\begin{prop}\label{prop F4 First}
Let $\la=a\omega_1 +b\omega_2 +c\omega_3 +d\omega_4 \in X^+(T)$.
\begin{itemize}
\item[(1)] If $d\ge 4$, then $L(\la)$ is finite dimensional for arbitrary $a,b,c\in\N$.

\item[(2)] If $d=0$, then  $L(\la)$ is finite dimensional if and only if $a\equiv b\equiv c\equiv 0 \pmod p$.
\end{itemize}
\end{prop}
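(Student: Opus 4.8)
The plan is to prove the two parts by completely separate routes.

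\emph{Part (1)} will follow at once from the Euler-characteristic criterion. Writing $\la=a\om_1+b\om_2+c\om_3+d\om_4$ and recalling $\rho=\om_1+\om_2+\om_3-3\om_4$, one has $\la+\rho=(a+1)\om_1+(b+1)\om_2+(c+1)\om_3+(d-3)\om_4$; when $d\ge 4$ (and $a,b,c\in\N$) all four coordinates are $\ge 1$, so $\la+\rho$ is $G_\oo$-dominant and regular, and Proposition~\ref{prop:EulerFinite} gives that $L(\la)$ is finite dimensional, with nothing more to check. \emph{Part (2)}, where $d=0$ so $\la=a\om_1+b\om_2+c\om_3$, will be proved by odd reflections, using Lemmas~\ref{lem:oddref}, \ref{lem:rational} and \ref{lem:lf=fd}. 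The structural input is that $\bbb^0,\dots,\bbb^5$ are precisely the Borel subalgebras reachable from $\bbb$ by sequences of odd reflections: the odd-reflection graph on $\{\Pi^0,\dots,\Pi^5\}$ is the chain $\Pi^0\to\Pi^1\to\Pi^2\to\Pi^3$ with $\Pi^3$ branching to $\Pi^4$ and $\Pi^5$; each $\bbb^i$ shares its even part $\bbb_\oo$ with $\bbb$ (odd reflections fix $\Phi^+_\oo$); and conversely every Borel with even part $\bbb_\oo$ is the unique representative of its $W$-conjugacy class with that even part, so there are exactly six of them.

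Let $\la^i$ denote the $\bbb^i$-highest weight of $L(\la)=L^\bbb(\la)$. The first thing to record, from \eqref{bracet for fundmental weight}, is
\[
(\la,\ga_1)=-\tfrac14(2a+4b+3c),\qquad (\la,\ga_2)=-\tfrac14(2a+4b+c),\qquad (\la,\ga_3)=-\tfrac14(2a+c),
\]
\[
(\la,\ga_4)=-\tfrac14(2a-c),\qquad (\la,\ga_5)=\tfrac14(2a-c),
\]
together with the fact that $\ga_1=\om_4-\om_3$, $\ga_2=\om_4-\om_2+\om_3$ and $\ga_3=\om_4-\om_1+\om_2-\om_3$ each have $\om_4$-coefficient $1$. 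For the necessity direction, suppose $L(\la)$ is finite dimensional; by Proposition~\ref{prop:equiv} it is then a rational $G_\oo$-module, so every $\la^i$ lies in $X^+(T)$ (Lemma~\ref{lem:simple} applied with $\bbb^i$ in place of $\bbb$, which is legitimate since $\bbb^i$ has even part $\bbb_\oo$). Since $\la^0=\la$ has $\om_4$-coefficient $0$, a nonzero value of $(\la,\ga_1)$ modulo $p$ would force $\la^1=\la-\ga_1$ by Lemma~\ref{lem:oddref}, whose $\om_4$-coefficient is $-1$, contradicting $\la^1\in X^+(T)$; hence $(\la,\ga_1)\equiv 0$ and $\la^1=\la$. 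Iterating with $\ga_2$ at $\bbb^1$ and with $\ga_3$ at $\bbb^2$ yields $\la^1=\la^2=\la^3=\la$ and $(\la,\ga_1)\equiv(\la,\ga_2)\equiv(\la,\ga_3)\equiv 0\pmod p$; as $p>3$ these read $2a+4b+3c\equiv 2a+4b+c\equiv 2a+c\equiv 0$, forcing successively $c\equiv 0$, then $a\equiv 0$, then $b\equiv 0\pmod p$.

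For sufficiency, assume $a\equiv b\equiv c\equiv 0\pmod p$. Then all five inner products displayed above vanish modulo $p$, so running Lemma~\ref{lem:oddref} along the chain $\bbb^0\to\bbb^1\to\bbb^2\to\bbb^3$ and along the two branches $\bbb^3\to\bbb^4$ and $\bbb^3\to\bbb^5$ gives $\la^i=\la=(a,b,c,0)\in X^+(T)$ for every $0\le i\le 5$. The hypothesis of Lemma~\ref{lem:rational} is thus satisfied, so $L(\la)$ is locally finite as a $\DGev$-module, and Lemma~\ref{lem:lf=fd} shows it is finite dimensional.

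I do not expect a genuine obstacle; the inner-product identities are one-line verifications and the rest is bookkeeping. The two points that want a careful sentence are: (a) the passage, in the necessity direction, from finite-dimensionality of $L(\la)$ to $G_\oo$-dominance of each $\la^i$ — which rests on Proposition~\ref{prop:equiv} together with all the $\bbb^i$ sharing the even part $\bbb_\oo$, so that $L(\la)\cong L^{\bbb^i}(\la^i)$ is still a finite-dimensional highest weight module with respect to $\bbb^i$; and (b) the assertion that $\bbb^0,\dots,\bbb^5$ exhaust the Borels obtainable from $\bbb$ by odd reflections, which is precisely what licenses applying Lemma~\ref{lem:rational} to the list of weights found above.
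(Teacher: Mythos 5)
Your proposal is correct and follows essentially the same route as the paper: part (1) is the identical appeal to Proposition~\ref{prop:EulerFinite}, and part (2) is the same odd-reflection argument, with your inline observation that each $\ga_j$ has $\om_4$-coefficient $1$ (so a nontrivial reflection would push the highest weight out of $X^+(T)$) being exactly the content of the paper's Lemma~\ref{lem:zero}. The inner products and the deduction $c\equiv a\equiv b\equiv 0$ match the paper's computation, and the sufficiency step via Lemma~\ref{lem:rational} is the same.
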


\begin{proof}
(1)
Let $d\ge 4$. Then $\la+\rho=(a+1,b+1,c+1,d-3)\in X^+(T)$ and it is regular. Hence $L(\la)$ is finite dimensional by Proposition~ \ref{prop:EulerFinite}.

(2)
Assume $L(\la)$ is finite dimensional, for $\la=(a,b,c,0)$.
Lemma~\ref{lem:zero} is applicable and gives us $(\la, \ga_1) \equiv (\la, \ga_2) \equiv (\la,\ga_3) \equiv 0\pmod p$. A direct computation shows
\[
(\la, \ga_1) = -{1\over 2}a-b-{3\over 4}c,
\qquad
(\la, \ga_2) = -{1\over 2}a-b-{1\over 4}c,
\qquad
(\la,\ga_3) = -{1\over 2}a- \frac14 c.
\]
From these we conclude that $a\equiv b\equiv c\equiv 0 \pmod p$.
In this case we have $\la^5=\la^4=\la^3 =\la^2=\la^1=\la$.

By Lemma~\ref{lem:rational}, we see the condition $a\equiv b\equiv c\equiv 0 \pmod p$ is also sufficient for $L(\la)$ to be finite dimensional (this also follows easily by Steinberg tensor product theorem).
\end{proof}

\subsubsection{The case of $d=1$}

\begin{prop}
  \label{prop:d=1simple}
Let $\la=a\omega_1 +b\omega_2 +c\omega_3 +d\omega_4 \in X^+(T)$, with $d=1$. Then  $L(\la)$ is finite dimensional if only if one of the following conditions holds.
\begin{itemize}
\item[(i)] $a\equiv  2b +3\equiv c-1 \equiv 0 \pmod p$;
\item[(ii)] $2a +1\equiv 2b+1 \equiv c\equiv 0 \pmod p$;
    \item[(iii)]  $2a+3\equiv b\equiv c\equiv 0 \pmod p$.
\end{itemize}
\end{prop}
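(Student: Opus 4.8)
The plan is to apply the odd-reflection machinery exactly as in the treatments of $\D$ (Theorem~\ref{thm:D}) and $G(3)$ (Proposition~\ref{prop:nec}): beginning with the standard Borel $\bbb^0=\bbb$, pass successively through the six simple systems $\Pi^0,\Pi^1,\dots,\Pi^5$ by applying $r_{\ga_1},r_{\ga_2},r_{\ga_3}$ and then $r_{\ga_4},r_{\ga_5}$, and use Lemma~\ref{lem:oddref} to record the highest weight $\la^i$ of $L(\la)$ relative to $\bbb^i$ (with $\la^0=\la$). By Lemmas~\ref{lem:rational} and \ref{lem:lf=fd}, $L(\la)$ is finite dimensional if and only if $\la^i\in X^+(T)$ for all $i$, so the content is the five remaining membership conditions. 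The only inputs are the pairings $(\om_k,\ga_j)$, read off from the bilinear-form data \eqref{bracet for fundmental weight}; these give a closed formula for $(\mu,\ga_j)$ for any $\mu=a\om_1+b\om_2+c\om_3+d\om_4$. The structural feature that makes the case $d=1$ manageable is that every $\ga_j$ has $\om_4$-coefficient $1$: the first odd reflection that genuinely alters $\la$ (i.e.\ subtracts some $\ga_j$) lowers the $\om_4$-coordinate from $1$ to $0$, and from then on Lemma~\ref{lem:zero} forces all remaining pairings to vanish and freezes the weight, so the tree of cases is finite.

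The case analysis is organized by whether $(\la^i,\ga_{i+1})\equiv 0\pmod p$ at each step. If $(\la,\ga_1)\not\equiv 0$, then $\la^1=\la-\ga_1$ has $\om_4$-coordinate $0$; Lemma~\ref{lem:zero} then forces $(\la^1,\ga_2)\equiv(\la^1,\ga_3)\equiv(\la^1,\ga_4)\equiv(\la^1,\ga_5)\equiv 0$, and solving this small system over $\mathbb{F}_p$ forces in addition $(\la,\ga_1)\equiv 0$, contradicting the case hypothesis; so this branch is vacuous. In the branch $(\la,\ga_1)\equiv 0$ we have $\la^1=\la$, and I split on $(\la,\ga_2)$: the subcase $(\la,\ga_2)\not\equiv 0$ gives $\la^2=\la-\ga_2$ with $\om_4$-coordinate $0$, and Lemma~\ref{lem:zero} together with $(\la,\ga_1)\equiv 0$ pins down condition (i); the subcase $(\la,\ga_2)\equiv 0$ forces $c\equiv 0$, and a further split on $(\la,\ga_3)$ yields condition (ii) when $(\la,\ga_3)\not\equiv 0$ (via $\la^3=\la-\ga_3$ and Lemma~\ref{lem:zero}) and condition (iii) when $(\la,\ga_3)\equiv 0$ (here $\la^3=\la$, and one checks that $(\la,\ga_4)$ and $(\la,\ga_5)$ then automatically vanish, so $\la^4=\la^5=\la$ and no new constraint appears). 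At each genuine reflection one must check that the new weight lies in $X^+(T)$; the a priori extra conditions $c\ge 1$ (in the $\ga_2$-subcase) and $b\ge 1$ (in the $\ga_3$-subcase) are automatic, because the congruences force $c\equiv 1$, resp.\ $b\not\equiv 0\pmod p$, and a nonnegative integer with such a residue is at least $1$. For sufficiency one reads the same chain forward: when one of (i)--(iii) holds, every $\la^i$ computed above lies in $X^+(T)$, so by Lemma~\ref{lem:rational} the module $L(\la)$ is a rational $\Gev$-module, hence finite dimensional by Lemma~\ref{lem:lf=fd}.

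I expect the main obstacle to be bookkeeping rather than any deep point: one must keep the branches of the tree straight, solve the resulting linear congruences over $\mathbb{F}_p$ without error, and --- crucially --- reconcile each solution with the hypotheses of its branch, since it is exactly this reconciliation that collapses the nominal endpoints of the tree to the three surviving conditions and that kills the $(\la,\ga_1)\not\equiv 0$ branch. A secondary technical point, in the spirit of the final simplifications in the proofs of Theorem~\ref{thm:D} and Proposition~\ref{prop:nec}, is to recognize that the auxiliary dominance conditions $c\ge 1$, $b\ge 1$ are redundant. Finally, one records in passing that (i)--(iii) are mutually exclusive: (i) has $c\equiv 1$ whereas (ii) and (iii) have $c\equiv 0$, and (ii), (iii) are distinguished by the residue of $2a$ --- consistent with the statement of the proposition.
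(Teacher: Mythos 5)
Your proposal follows the paper's proof essentially step for step: the same vacuity argument for the branch $(\la,\ga_1)\not\equiv 0$ via Lemma~\ref{lem:zero}, the same subsequent splits on $(\la,\ga_2)$ and $(\la,\ga_3)$ yielding (i), (ii), (iii) respectively, and the same use of Lemmas~\ref{lem:rational} and \ref{lem:lf=fd} for sufficiency, with the same observation that the dominance inequalities $c\ge 1$ and $b\ge 1$ are automatic from the congruences.

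There is one computational slip in the branch leading to condition (iii). You assert that when $a\equiv -\tfrac32$, $b\equiv 0$, $c\equiv 0$, both $(\la,\ga_4)$ and $(\la,\ga_5)$ vanish, so that $\la^4=\la^5=\la$. The first pairing does vanish, but $(\la,\ga_5)=\tfrac12 a-\tfrac14 c-\tfrac34\equiv -\tfrac32\not\equiv 0$ for $p>3$, so in fact $\la^5=\la-\ga_5=(a-1,b,c+1,0)\ne\la$, exactly as recorded in the paper's Case (2b). This forces the additional dominance check $a\ge 1$, which you must supply: it holds because $2a+3\equiv 0$ and $p>3$ give $a\not\equiv 0$, hence $a\ne 0$ for $a\in\N$. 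The final classification is therefore unaffected, but as written your justification that ``no new constraint appears'' in case (iii) rests on a false identity and needs this repair.
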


\begin{proof}
Assume $L(\la)$ is finite dimensional, for $\la=(a,b,c,1) \in X^+(T)$. We compute
\[
(\la, \ga_1)=-{1\over 2}a-b-{3\over 4}(c+1).
\]

For now let us assume
$-{1\over 2}a-b-{3\over 4}(c+1)\not\equiv 0 \pmod p$. Then $\la^1=\la-\ga_1=(a,b,c+1,0).$ Hence Lemma~\ref{lem:zero} is applicable and gives us  $(\la^1, \ga_2) \equiv (\la^1,\ga_3) \equiv (\la^1, \ga_4) \equiv 0$. A direct computation shows
\begin{align*}
(\la^1, \ga_2) =-{1\over 2}a-b-{1\over 4}(c+1),
\quad
(\la^1, \ga_3) =-{1\over 2}a-{1\over 4}(c+1),
\quad
(\la^1,\ga_4) =-{1\over 2}a+{1\over 4}(c+1).
\end{align*}
From these we conclude that $a\equiv b \equiv c+1 \equiv 0$. This contradicts the assumption $-{1\over 2}a-b-{3\over 4}(c+1)\not\equiv 0$.

So we always have
\begin{align}
  \label{eq:d=1:laga1=0}
-{1\over 2}a-b-{3\over 4}(c+1) \equiv 0 \pmod p,
\qquad
\text{and}
\qquad
\la^1 =\la=(a,b,c,1).
\end{align}
Using the above equations, we compute
\[
(\la^1,\ga_2)=-{1\over 2}a-b-{1\over 4}c-{3\over 4}\equiv {1\over 2}c \pmod p.
\]
We now divide into 2 cases (1)-(2).

\vskip5pt
(1) Assume $c\not\equiv 0 \pmod p$. Then $\la^2=\la^1-\ga_2=(a,b+1,c-1,0)$; we necessarily have $c\ge 1$. Hence Lemma~\ref{lem:zero} is applicable and gives us that $(\la^2,\ga_3) \equiv (\la^2, \ga_4) \equiv 0$. A direct computation shows
\[
(\la^2, \ga_3) =-{1\over 2}a-{1\over 4}(c-1),
\qquad
(\la^2,\ga_4) =-{1\over 2}a+{1\over 4}(c-1).
\]
From these we conclude $a \equiv c-1\equiv 0$; a revisit of \eqref{eq:d=1:laga1=0} then gives us  $b\equiv -{3\over 2}$.  This gives us Condition~(i) in the proposition. (Note the conditions $c\ge 1$ and \eqref{eq:d=1:laga1=0} are automatically satisfied.) In this case, we have $\la^5=\la^4=\la^3 =\la^2=(a,b+1,c-1,0)$ and $\la^1 =\la$.

\vskip5pt

(2) Assume $c\equiv 0 \pmod p$. So $\la^2=\la^1=\la=(a,b,c,1)$.   We compute
\[
(\la^2,\ga_3)=-{1\over 2}a-{1\over 4}(c+3)\equiv -{1\over 2}a-{3\over 4} \pmod p.
\]
 Now we divide (2) into two subcases (2a)-(2b).
\vskip5pt

\begin{enumerate}
 \item[(2a)]
 Assume $-{1\over 2}a-{3\over 4}\not\equiv 0 \pmod p$. Then $\la^3=\la^2-\ga_3=(a+1,b-1,c+1,0)$; we necessarily have $b\ge 1$. Hence Lemma~\ref{lem:zero} is applicable and gives us that $(\la^3, \ga_4) \equiv 0$. A direct computation shows
 $(\la^3,\ga_4) =-{1\over 2}a +{1\over 4}c -{1\over 4}$. Recalling $c\equiv 0$, we conclude that $a+\hf \equiv 0$. A revisit of \eqref{eq:d=1:laga1=0} then gives us  $b\equiv -\hf$.  This gives us Condition~(ii) in the proposition. (Note the conditions $b\ge 1$ and \eqref{eq:d=1:laga1=0} are automatically satisfied.) In this case, we have $\la^5=\la^4=\la^3=(a+1,b-1,c+1,0)$ and $\la^2=\la^1 =\la$.

 \item[(2b)]
  Assume $-{1\over 2}a-{3\over 4}\equiv 0 \pmod p$. Then $a\equiv -{3\over 2}$, and it follows by $c\equiv 0$ and \eqref{eq:d=1:laga1=0}  that $b\equiv 0$. This gives us Condition~(iii). In this case, we have $\la^4=\la^3=\la^2=\la^1=\la$, and $\la^5 
 =(a-1,b,c+1,0)$.
 \end{enumerate}
 \vskip5pt

 By Lemma~\ref{lem:rational} and by inspection that all weights $\la^i$ lie in $X^+(T)$ for all $i$ in all cases above, we see the conditions (i)-(iii) are sufficient for $L(\la)$ to be finite dimensional. The proposition is proved.
\end{proof}

\subsubsection{The case of $d=2$}

\begin{prop}
  \label{prop:d=2messy}
Assume $\la =a\omega_1 + b\omega_2 +c\omega_3 +d\omega_4 \in X^+(T)$ with $d=2$. Then $L(\la)$ is finite dimensional if only if one of the following conditions hold:
\begin{enumerate}[label*=\arabic*.]
\item
\begin{enumerate}[label*=\arabic*.]
\item $a\equiv c\equiv 0$,  $b\not\equiv -1$ and $b\not\equiv -{3\over 2}$;

\item $a\not\equiv -1$,  $b\equiv -a-1$, $c\equiv 2a$, and $b\ge 1$;

\item $a\not\equiv-{3\over 2}$, $b\equiv 0$ and $c\equiv-2a-4$.
\end{enumerate}
\item
\begin{enumerate}[label*=\arabic*.]

\item $c\equiv 2a+2$,  $b\equiv -2a-3$,  $c\geq 1$, and $a\not\equiv -1$;

\item
   \begin{enumerate}
   \item
    $c\equiv -2a-2$, $b\equiv a$,  $a\not\equiv -{3\over 2}$, $a\not\equiv -1$,  $c\geq 2$, and $a\geq 1$;

    \item
  $a\equiv -{3\over 2}$,  $b\equiv  -{3\over 2}$, $c\equiv 1$;
  \end{enumerate}

   \item $c\equiv 0$, $b\equiv -{1\over 2}a-{3\over 2}$, and $a\not\equiv -3$;

 \item $c\equiv b\equiv 0$ and $a\equiv -3$.
  \end{enumerate}
\end{enumerate}
\end{prop}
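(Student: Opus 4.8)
The plan is to continue, for $d=2$, the odd-reflection computation that was carried out for $d=1$ in Proposition~\ref{prop:d=1simple}, now running it along the entire chain $\Pi^0\to\Pi^1\to\Pi^2\to\Pi^3$ together with the side branch $\Pi^3\to\Pi^5$. Observe first that Proposition~\ref{prop:EulerFinite} is not available here, since for $d=2$ we have $\la+\rho=(a+1,b+1,c+1,-1)\notin X^+(T)$, so $\la+\rho$ is not $G_\oo$-dominant. Instead one uses the reduction provided by Proposition~\ref{prop:equiv} and Lemmas~\ref{lem:oddref}, \ref{lem:rational}, \ref{lem:lf=fd}: the module $L(\la)$ is finite dimensional if and only if the $\bbb^i$-highest weight $\la^i$ lies in $X^+(T)$ for every $i=0,1,\dots,5$. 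Written in the $\om$-basis, every positive odd root has $\om_4$-coefficient $+1$, namely $\ga_1=\om_4-\om_3$, $\ga_2=\om_4-\om_2+\om_3$, $\ga_3=\om_4-\om_1+\om_2-\om_3$, $\ga_4=\om_4-\om_1+\om_3$ and $\ga_5=\om_4+\om_1-\om_3$, so a nontrivial odd reflection $r_{\ga_j}$ lowers the $\om_4$-coefficient by one while shifting exactly one of the remaining three coefficients by $\pm1$; hence a branch can leave $X^+(T)$ only when some reflection drives the $\om_4$-coefficient negative, or (through $\ga_3$, resp.\ $\ga_5$) the $\om_2$-, resp.\ $\om_1$-coefficient negative. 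Because $d=2$, at most two of $r_{\ga_1},r_{\ga_2},r_{\ga_3}$ can be nontrivial, and as soon as some $\la^i$ has the form $(x,y,z,0)$, Lemma~\ref{lem:zero} forces every remaining $(\la^i,\ga_j)$ to vanish mod $p$, so $\la^j=\la^i$ and that branch terminates.

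Concretely I would build the decision tree: at $\Pi^{i}$ compute $(\la^{i},\ga_{i+1})$ from the table~\eqref{bracet for fundmental weight} --- for instance $(\la,\ga_1)=-\tfrac12a-b-\tfrac34(c+2)$, so that the first split is on whether $2a+4b+3c+6\equiv0\pmod p$ --- apply Lemma~\ref{lem:oddref} to put $\la^{i+1}=\la^{i}$ or $\la^{i}-\ga_{i+1}$, and at $\Pi^3$ run the two separate branchings $r_{\ga_4}$, producing $\la^4$, and $r_{\ga_5}$, producing $\la^5$. Each leaf of the tree then records (i) the list of congruences mod $p$ forced by the chosen vanishing/non-vanishing pattern and by the applications of Lemma~\ref{lem:zero}; and (ii) the integrality inequalities on $a,b,c$ that must hold so that the decremented coordinates of the relevant new weights stay nonnegative --- these come from the reflections that can lower a coordinate, $r_{\ga_5}$ giving a condition $a\ge1$, $r_{\ga_3}$ a condition $b\ge1$, and $r_{\ga_2},r_{\ga_4}$ conditions of the form $c\ge1$ (or $c\ge2$ when both act). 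Enumerating the admissible patterns --- at most two nontrivial reflections among $\ga_1,\ga_2,\ga_3$, then at most one application each of $r_{\ga_4}$ and $r_{\ga_5}$ --- discarding the inconsistent leaves, and merging overlapping ones in the style of the $G(3)$ argument around \eqref{eq:1aii+2bi}--\eqref{eq:2ai+2aii}, yields the seven conditions 1.1--2.4. For the converse, one re-runs the same recursion under each of 1.1--2.4, checks that all $\la^i$ thereby produced lie in $X^+(T)$ (in particular that the inequalities in (ii) are satisfied), and concludes by Lemmas~\ref{lem:rational} and~\ref{lem:lf=fd}.

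The main obstacle is the combinatorial bookkeeping, heavier than in the $d=1$ case because the first reflection $r_{\ga_1}$ now genuinely branches, so the tree is larger. One must: (i) reduce each $(\la^i,\ga_{i+1})$, using $p>3$ so that $2$ and $3$ are invertible mod $p$ when rewriting congruences such as $2a+4b+3c+6\equiv0$; (ii) determine, leaf by leaf, which of the integrality conditions $a,b,c\ge0$ on the intermediate weights follow automatically from the accumulated congruences and which are genuinely new --- this is what accounts for the inequalities $a\ge1$, $b\ge1$, $c\ge1$, $c\ge2$ surviving in some of 1.1--2.4 but not others; and (iii) correctly merge the many leaves, some mutually exclusive and some overlapping, into the final seven-item list.
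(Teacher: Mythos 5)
Your proposal follows exactly the paper's own route: a decision tree of odd reflections along $\Pi^0\to\Pi^1\to\Pi^2\to\Pi^3$ with the two terminal branches $r_{\ga_4},r_{\ga_5}$, using Lemma~\ref{lem:oddref} at each node, Lemma~\ref{lem:zero} to kill all remaining pairings once a weight of the form $(x,y,z,0)$ appears, the dominance inequalities $a\ge1$, $b\ge1$, $c\ge1$ (or $c\ge2$) coming from the coordinate drops of $\ga_5,\ga_3,\ga_2,\ga_4$, and Lemmas~\ref{lem:rational}--\ref{lem:lf=fd} for sufficiency after checking every $\la^i\in X^+(T)$; your opening pairing $2a+4b+3c+6\equiv0$ agrees with the paper's $-\tfrac12a-b-\tfrac34c-\tfrac32\equiv0$. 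The method and all key inputs are correct and identical to the paper's; what remains is only the explicit enumeration of the leaves, which is the computational bulk of the paper's proof but contains no further ideas.
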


\begin{proof}
Assume $L(\la)$ is finite dimensional, for $\la=(a,b,c,2) \in X^+(T)$. We compute $(\la,\ga_1) =-{1\over 2}a-b-{3\over 4}c-{3\over 2}$, and then divide into two cases (1)-(2) below.

(1)
Assume $-{1\over 2}a-b-{3\over 4}c-{3\over 2}\not\equiv 0\pmod p$. Then $\la^1=\la-\ga_1=(a,b,c+1,1)$. We compute $(\la^1,\ga_2) =-{1\over 2}a-b-{1\over 4}c-1$, and then divide into 2 subcases (1a)-(1b).

\vspace{2mm}
(1a)
Assume $-{1\over 2}a-b-{1\over 4}c-1\not\equiv 0\pmod p$. Then $\la^2=\la^1-\ga_2=(a,b+1,c,0)$. Hence Lemma~\ref{lem:zero} is applicable and gives us that $(\la^2,\ga_3) \equiv (\la^2, \ga_4) \equiv (\la^2, \ga_5) \equiv 0$. From these and a direct computation of $(\la^2, \ga_3)=-{1\over 2}a-{1\over 4}c$, $(\la^2, \ga_4)=-{1\over 2}a+{1\over 4}c$, and $(\la^2, \ga_5)={1\over 2}a-{1\over 4}c$, we conclude that $a\equiv c\equiv 0,  b\not\equiv -1, b\not\equiv -{3\over 2}$, whence Condition ~1.1. In this case, we have $\la^5=\la^4 =\la^3=\la^2=(a,b+1,c,0)$.

\vspace{2mm}
(1b)
Assume $-{1\over 2}a-b-{1\over 4}c-1\equiv 0\pmod p$. Then $\la^2=\la^1=(a,b,c+1,1)$. We compute $(\la^2,\ga_3)=-{1\over 2}a-{1\over 4}c-1$, and then again divide into 2 subcases (1b-1)-(1b-2):
\begin{itemize}
  \item[(1b-1)]
  Assume $-{1\over 2}a-{1\over 4}c-{1}\not\equiv 0$. Then $\la^3=\la^2-\ga_3=(a+1,b-1,c+2,0)$. Hence Lemma~\ref{lem:zero} is applicable and gives us that $(\la^3,\ga_4)\equiv (\la^3,\ga_5) \equiv 0$. From these and a direct computation of { $(\la^3, \ga_4)=-{1\over 2}(a+1)+{1\over 4}(c+2)$ and $(\la^3, \ga_5)={1\over 2}(a+1)-{1\over 4}(c+2)$}, we conclude that $c\equiv 2a$. Combining with the conditions on (1), (1b) and (1b-1), this gives us $b\equiv -a-1$ and $a\not\equiv -1$, whence Condition~1.2. In this case we have $\la^2=\la^1=(a,b,c+1,1)$, and $\la^5=\la^4=\la^3=(a+1,b-1,c+2,0)$.

  \item[(1b-2)]
 Assume $-{1\over 2}a-{1\over 4}c-{1}\equiv 0$.  Then $\la^3=\la^2=\la^1=(a,b,c+1,1)$. We deduce from the conditions on (1), (1b) and (1b-2) that  $b\equiv 0,c\equiv-2a-4,a\not\equiv-{3\over 2}$, whence Condition ~1.3. \big(We then compute $(\la^3,\ga_4)=-{1\over 2}a+{1\over 4}c-{1\over 2} \equiv -a-{3\over 2}\not\equiv0$.   Thus, $\la^4=\la^3-\ga_4=(a+1,b,c,0)$. Note that  $(\la^3,\ga_5)={1\over 2}a -{1\over 4} c-1$ ($\equiv a$). Hence $\la^5=\la^3=(a,b,c+1,1)$ if $a\equiv 0$; $\la^5 =(a-1,b,c,0)$ if $a\not\equiv 0$.\big)

\end{itemize}
Case (1b) and hence Case (1) are completed.

\vspace{3mm}

(2)
Assume $-{1\over 2}a-b-{3\over 4}c-{3\over 2}\equiv 0\pmod p$. We have $\la^1=\la=(a,b,c,2)$. Then we compute $(\la^1,\ga_2)=-{1\over 2}a-b-{1\over 4}c-{3\over 2}$, and divide into 2 subcases (2a)-(2b).

\vspace{2mm}
(2a)
Assume $-{1\over 2}a-b-{1\over 4}c-{3\over 2}\not\equiv 0$. Then we compute $\la^2=\la^1-{\ga_2}=(a,b+1,c-1,1)$; we necessarily have $c\geq 1$. (Note the combination of the condition $c\ge 1$ and the condition on (2) implies the condition on (2a).) We further compute $(\la^2,\ga_3)=-{1\over 2}a-{1\over 4}c-{1\over 2}$, and then divide into 2 subcases (2a-1)-(2a-2).
\begin{itemize}
\item[(2a-1)]
 Assume $-{1\over 2}a-{1\over 4}c-{1\over 2}\not\equiv 0$. Then $\la^3=\la^2-\ga_3=(a+1, b,c,0)$. Hence Lemma~\ref{lem:zero} is applicable and gives us that $(\la^3,\ga_4)\equiv (\la^3,\ga_5) \equiv 0$. Combining with the computations of {$(\la^3,\ga_4)=-{1\over 2}(a+1)+{1\over 4}c$ and $(\la^3,\ga_5)={1\over 2}(a+1)-{1\over 4}c$}, this implies $c\equiv 2a+2$ and $b\equiv -2a-3$; moreover the condition on (2a-1) becomes $a\not\equiv -1$. Thus, we have obtained Condition~ 2.1.
  In this case, we have $\la^1=\la$, $\la^2=(a,b+1,c-1,1)$, and $\la^5=\la^4=\la^3=(a+1, b,c,0)$.

\item[(2a-2)]
Assume $-{1\over 2}a-{1\over 4}c-{1\over 2}\equiv 0$. The conditions on (2), (2a) and (2a-2) can be rephrased as $c\equiv -2a-2$, $b\equiv a$ and $a\not\equiv -1$. We have  $\la^3=\la^2=(a,b+1,c-1,1)$; we necessarily have $c\geq 1$. We further compute $(\la^3,\ga_4)=-{1\over 2}a+{1\over 4}c-1\equiv -a-{3\over 2}$, and again divide into 2 subcases:
\begin{itemize}
 \item[(i)]
 Assume $a\not\equiv -{3\over 2}$. Then we have $\la^4=\la^3-\ga_4=(a+1,b+1,c-2,0)$; we necessarily have $c\geq 2$. Moreover, if $(\la^3,\ga_5)=a\not\equiv 0$, then $\la^5=\la^3-\ga_5=(a-1,b+1,c,0)$, requiring $a\geq 1$; otherwise, $\la^5=\la^3$. This gives us Condition~ 2.2(i).

\item[(ii)]
Assume $a\equiv -{3\over 2}$. Then  we have  $b\equiv  -{3\over 2}$ and $c\equiv 1$, whence Condition~ 2.2(ii).
In this case, we have $\la^1=\la$, $\la^4=\la^3=\la^2=(a,b+1,c-1,1)$, and $\la^5=\la^3-\ga_5=(a-1,b+1,c,0)$.
 \end{itemize}
\end{itemize}
This completes Case (2a).

\vspace{2mm}
(2b)
Assume $-{1\over 2}a-b-{1\over 4}c-{3\over 2}\equiv 0\pmod p$. Then $\la^2=\la^1=(a,b,c,2)$. We compute {$(\la^2,\ga_3) =-{1\over 2}a-{1\over 4}c-{3\over 2}$}, and divide into 2 subcases (2b-1)-(2b-2).
\begin{itemize}
\item[(2b-1)]
Assume $-{1\over 2}a-{1\over 4}c-{3\over 2}\not\equiv 0\pmod p$. Then we have $c\equiv 0$, $b\equiv -{1\over 2}a-{3\over 2}$, and $a\not\equiv -3$, whence Condition~ 2.3.
In this case, we have $\la^2=\la^1=\la=(a,b,c,2)$, $\la^3=\la^2-\ga_3=(a+1,b-1,c+1,1)$, and then $(\la^3,\ga_4)=-{1\over2}a+{1\over 4}c-1\equiv -{1\over 2}a-1$ and
$(\la^3,\ga_5)\equiv{1\over2}a-{1\over 2}$. So  $\la^4=\la^3- \ga_4=(a+2,b-1,c,0) $ if $a\not\equiv -2$, and $\la^4=\la^3$ otherwise; moreover,  if $a\not\equiv 1$ then
 $\la^5=\la^3-\ga_5=(a,b-1,c+2,0)$; otherwise $\la^5=\la^3$.

\item[(2b-2)]
Assume $-{1\over 2}a-{1\over 4}c-{3\over 2}\equiv 0\pmod p$. Then we have  $a\equiv -3$, $b\equiv 0$ and $c\equiv 0$, whence Condition~ 2.4.
In this case, we have $\la^i=\la$ for $1\le i \le 5$.
\end{itemize}
Case (2b) and then Case (2) are hence completed. Therefore, we have established the necessary conditions as listed in the proposition for $L(\la)$ to be finite dimensional.

By inspection, we have all weights $\la^i \in X^+(T)$ for all $i$ in every case above. Hence by
Lemma~\ref{lem:rational} we conclude that the conditions as listed in the proposition are also sufficient for $L(\la)$ to be finite dimensional.
 \end{proof}


Now we simplify the above conditions by removing all inequalities. We caution that the resulting conditions are no longer mutually exclusive.

\begin{prop}
  \label{prop:d=2simple}
Set $d=2$. Assume $\la =a\omega_1 + b\omega_2 +c\omega_3 +d\omega_4 \in X^+(T)$.
Then $L(\la)$ is finite dimensional if only if one of the following conditions (i)--(vi) holds:
\begin{enumerate}
\item[(i)] $a\equiv c\equiv 0\pmod p$;

\item[(ii)]  $2a-c \equiv a+b+1\equiv 0\pmod p$; 

 \item[(iii)]  $b\equiv 2a+c+4\equiv 0\pmod p$; 

\item[(iv)]  $2a-c+2 \equiv 2a+ b +3 \equiv 0\pmod p$, and $c\geq 2$;

\item[(v)]   $2a+c+2\equiv a-b\equiv 0\pmod p$, and  $a\geq 1$; 

\item[(vi)]  $a+2b+3 \equiv c\equiv 0\pmod p$.
\end{enumerate}
\end{prop}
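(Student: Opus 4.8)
The plan is to obtain Proposition~\ref{prop:d=2simple} as a purely combinatorial consequence of Proposition~\ref{prop:d=2messy}. Since the latter already identifies, for $\la=a\om_1+b\om_2+c\om_3+2\om_4\in X^+(T)$, the finite-dimensionality of $L(\la)$ with the disjunction of its conditions 1.1--2.4, it suffices to prove that for natural numbers $a,b,c$, and under the standing assumption $p>3$, that disjunction is equivalent to the disjunction of (i)--(vi). The recurring device throughout is the observation that a natural number $x$ with $x\equiv r\pmod p$ and $r\not\equiv 0$ is automatically $\ge 1$ (indeed $\ge p-1\ge 4$), and likewise $\ge 2$; this is exactly what makes the inequalities $a\ge 1$, $b\ge 1$, $c\ge 1$, $c\ge 2$ in Proposition~\ref{prop:d=2messy} redundant once the accompanying congruences are imposed, and so lets us pass to the cleaner list (i)--(vi).

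First I would check that each of 1.1--2.4 implies one of (i)--(vi). For the six generic conditions 1.1, 1.2, 1.3, 2.1, 2.2(i), 2.3 this is a matter of rewriting the congruences in the normalized shape used in (i)--(vi) --- $b\equiv-a-1$ as $a+b+1\equiv0$, $c\equiv 2a$ as $2a-c\equiv0$, $b\equiv-\tfrac12 a-\tfrac32$ as $a+2b+3\equiv0$, and so on --- and discarding the residual inequalities; this matches them respectively with (i), (ii), (iii), (iv), (v), (vi). The two boundary conditions are absorbed by substitution: in 2.2(ii) one has $a\equiv b\equiv-\tfrac32$, $c\equiv 1$, hence $2a+c+2\equiv 0$, $a-b\equiv 0$, and $a\ge 1$ because $a\equiv-\tfrac32\not\equiv 0$, i.e.\ (v); in 2.4 one has $a\equiv -3$, $b\equiv c\equiv 0$, hence $a+2b+3\equiv 0\equiv c$, i.e.\ (vi).

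Next I would prove the converse, that each of (i)--(vi) implies one of 1.1--2.4. Fixing a weight satisfying one of (i)--(vi), it satisfies the ``matching'' messy condition from the previous step unless some discarded inequality or excluded residue fails --- and in each case such a failure amounts to one extra congruence on $a$ (or $b$), which one feeds back into the remaining congruences. The enumeration is short: (i) but not 1.1 forces $b\equiv-1$ (then 1.2, with $b\ge 1$ automatic) or $b\equiv-\tfrac32$ (then 2.3); (ii) but not 1.2 forces $a\equiv-1$, hence $b\equiv 0$, $c\equiv-2$ (then 1.3); (iii) but not 1.3 forces $a\equiv-\tfrac32$, hence $b\equiv 0$, $c\equiv-1$ (then 2.1, $c\ge 1$ automatic); (iv) but not 2.1 forces $a\equiv-1$, hence $b\equiv-1$, $c\equiv 0$ (then 2.3); (v) but not 2.2(i) forces $a\equiv-\tfrac32$ (then 2.2(ii)) or $a\equiv-1$ (then 2.3), the only other escape $c\le 1$ collapsing to one of these since $c\equiv-2a-2$; (vi) but not 2.3 forces $a\equiv-3$, hence $b\equiv 0$ (then 2.4). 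In each line one also verifies the side conditions ($a\not\equiv-1$, $a\not\equiv-3$, $a\not\equiv-\tfrac32$, and the surviving inequalities) of the target, again by turning nonzero residues into positivity.

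Combining the two steps, the disjunction 1.1$\vee\cdots\vee$2.4 and the disjunction (i)$\vee\cdots\vee$(vi) define the same subset of $\N^3$, and Proposition~\ref{prop:d=2messy} then yields Proposition~\ref{prop:d=2simple}. I expect the main difficulty to be organizational rather than conceptual: one must be sure the case analysis in the second step is genuinely exhaustive --- every way in which a dropped strict inequality or an excluded residue can fail must be caught and routed to one of 1.1--2.4 --- and one must keep careful track of exactly which inequalities of Proposition~\ref{prop:d=2messy} genuinely survive into (i)--(vi) and which get absorbed. No individual congruence computation is hard, but there are many, and, as the statement warns, one should resist trying to make (i)--(vi) mutually exclusive, as that would reintroduce precisely the inequalities one is trying to remove.
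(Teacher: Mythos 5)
Your overall strategy coincides with the paper's: show that the disjunction of conditions 1.1--2.4 of Proposition~\ref{prop:d=2messy} and the disjunction of (i)--(vi) cut out the same subset of $\N^3$, proving the forward inclusion by normalizing the congruences and discarding the inequalities, and the reverse inclusion by routing each failure of a discarded inequality or excluded residue back to another condition of Proposition~\ref{prop:d=2messy}. Your reverse direction agrees case by case with the paper's proof, and your observation that $b\ge 1$, $c\ge 1$, $a\ge 1$ follow from the accompanying nonzero congruences is correct and is exactly the mechanism the paper uses.

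There is, however, a genuine gap in your forward direction at the step ``2.1 implies (iv)''. You justify it by the principle that a natural number $x$ with $x\equiv r\pmod p$, $r\not\equiv 0$, is automatically $\ge 1$ ``and likewise $\ge 2$''; the first half is right, but the second half is false ($x$ can equal $1$), as is the parenthetical claim that $x\ge p-1$. Condition 2.1 only gives $c\not\equiv 0$, hence $c\ge 1$, while (iv) demands $c\ge 2$, and nothing in 2.1 rules out $c=1$ (which forces $a\equiv-\tfrac12$, $b\equiv -2$). Concretely, for $p=5$ and $\la=2\om_1+3\om_2+\om_3+2\om_4$, condition 2.1 holds, and $L(\la)$ is indeed finite dimensional (following the odd reflections one finds $\la^3=\la^4=\la^5=(3,3,1,0)$), yet none of (i)--(vi) is satisfied. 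So this step cannot be repaired as the statement is printed: the inequality in (iv) should read $c\ge 1$ (or be deleted altogether, since the congruences of (iv) with $c=0$ force $a\equiv b\equiv -1$, a case already covered by (vi)). The paper's own proof shares this blind spot---it asserts that (i)--(vi) are ``obtained by removing the inequalities,'' which is not what happened in (iv), where $c\ge 1$ was silently strengthened to $c\ge 2$. With (iv) corrected to $c\ge 1$, both your forward step (now immediate) and your reverse step ``(iv) but not 2.1 forces $a\equiv-1$, hence 2.3'' go through, and the rest of your argument stands.
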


\begin{proof}
One first observes that all conditions listed in Proposition~\ref{prop:d=2messy} are part of conditions listed above in this proposition. Indeed the conditions above are basically obtained by removing the inequalities in the conditions in Proposition~\ref{prop:d=2messy}; the cases~(1.4) and (2.4) with no inequalities in Proposition~\ref{prop:d=2messy} are part of (iii) and (vi) above, respectively.

It remains to show that all conditions above in this proposition are included in the list of conditions (1.1)--(1.3) and (2.1)--(2.4) in Proposition~\ref{prop:d=2messy}.

If Condition (i) is satisfied but (1.1) in Proposition~\ref{prop:d=2messy} is not, then either (A) $b\equiv -1$, in which case $a\equiv c\equiv 0$, and so (1.2) is satisfied, or (B) $b \equiv -{3\over 2}$, in which case $a\equiv c\equiv 0$, and so (2.3) is satisfied.

If Condition (ii) is satisfied but (1.2) in Proposition~\ref{prop:d=2messy} is not, then either (A) $a\equiv -1$, in which case $b\equiv0$ and $c\equiv -2$, and hence (1.3) is satisfied,  or (B) $b=0$, in which case, $a\equiv -1, c\equiv -2$, and so (1.3) is satisfied.

If Condition (iii) is satisfied but (1.3) in Proposition~\ref{prop:d=2messy} is not, then $a\equiv -{3\over 2}$, in which case $b\equiv 0, c\equiv -{1}$, and so (2.1) is satisfied.

If Condition (iv) is satisfied but (2.1) in Proposition~\ref{prop:d=2messy} is not, then $a\equiv -1$, in which case $b\equiv -1, c\equiv 0$, and so (2.3) is satisfied.

If Condition (v) is satisfied but (2.2)(i) in Proposition~\ref{prop:d=2messy} is not, then  either (A) $a \equiv -{3\over 2}$, in which case $b \equiv -{3\over 2}, c\equiv 1$, and so (2.2)(ii) is satisfied; or (B) $a \equiv -1$, in which case $b \equiv -1, c\equiv 0$, and so (2.3) is satisfied; or (C) $c=0$, in which case $a\equiv b \equiv -1$, and so (2.3) is satisfied.

If Condition (vi) is satisfied but (2.3) in Proposition~\ref{prop:d=2messy} is not, then $a\equiv -3$, $b \equiv c\equiv 0$, and so (2.4) is satisfied.

The proposition is proved.
\end{proof}

\subsubsection{The case of $d=3$}

\begin{prop}
  \label{prop:d=3messy}
Assume $\la =a\omega_1 + b\omega_2 +c\omega_3 +d\omega_4 \in X^+(T)$, with $d=3$.
Then $L(\la)$ is finite dimensional if only if one of the following conditions holds:

\begin{enumerate}[label*=\arabic*.]
\item
  \begin{enumerate}[label*=\arabic*.]
   \item
     $c\equiv 2a+1$, and $b\not\equiv -2a-{3}$, $b\not\equiv -a-{2}$,  $a\not\equiv -{1}$;
   \item
      $c\equiv -2a-3$, and $b\not\equiv -1$, $b\not\equiv a$;
   \item
     $b\equiv -{1\over 2}a-{1\over 4}c-{7\over 4}$, and $b\not\equiv 0$,  $c \not\equiv -1$;
   \item
     $b\equiv 0$, $c\equiv -2a-7$, and $a\not\equiv -3$.
  \end{enumerate}
\item
  \begin{enumerate}[label*=\arabic*.]
   \item
     $b\equiv -{1\over 2}a-{3\over 4}c-{9\over 4}$, and $c\not\equiv 0$, $c\not\equiv -2a-5$;
   \item
     $b\equiv a+{3\over 2}$,  $c\equiv -2a-5$, and $c\not\equiv 0$;
   \item
     $b\equiv-{1\over 2}a-{9\over 4}$, $c\equiv 0$, and $b\not\equiv 0$;
   \item
     $a\equiv -{9\over 2}$, $b\equiv c\equiv 0$.
\end{enumerate}
\end{enumerate}
\end{prop}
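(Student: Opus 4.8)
The plan is to run, for $\la=(a,b,c,3)\in X^+(T)$, the same odd-reflection analysis already carried out for $d=0,1,2$ in Propositions~\ref{prop F4 First}, \ref{prop:d=1simple} and \ref{prop:d=2messy}. The preliminary step is bookkeeping: from \eqref{bracet for fundmental weight} together with the expressions for $\ga_1,\dots,\ga_5$ in the $\om$-basis, one records the five linear functionals once and for all. If $\mu=(x,y,z,w)$ in the $\om$-basis, then $(\mu,\ga_1)=-\tfrac34 w-\tfrac12 x-y-\tfrac34 z$, $(\mu,\ga_2)=-\tfrac34 w-\tfrac12 x-y-\tfrac14 z$, $(\mu,\ga_3)=-\tfrac34 w-\tfrac12 x-\tfrac14 z$, $(\mu,\ga_4)=-\tfrac34 w-\tfrac12 x+\tfrac14 z$, and $(\mu,\ga_5)=-\tfrac34 w+\tfrac12 x-\tfrac14 z$; with these in hand every individual reflection step becomes a one-line computation.

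Next I would descend the chain of simple systems $\Pi^0\to\Pi^1\to\Pi^2\to\Pi^3$ followed by the fork $\Pi^3\to\Pi^4$ and $\Pi^3\to\Pi^5$, applying Lemma~\ref{lem:oddref} at each odd reflection $r_{\ga_j}$: I compute $(\la^{j-1},\ga_j)\bmod p$, set $\la^j=\la^{j-1}$ if it vanishes and $\la^j=\la^{j-1}-\ga_j$ otherwise, and branch into two cases. The $\om_4$-coordinate begins at $3$ and drops by $1$ whenever $\la^j=\la^{j-1}-\ga_j$, so as soon as some $\la^i$ with $i\le 3$ has vanishing $\om_4$-coordinate, Lemma~\ref{lem:zero} forces all remaining odd pairings to be $\equiv 0\pmod p$ and freezes the weight through $\la^5$. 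In each of the resulting terminal branches the surviving data are precisely the congruences coming from the conditions $(\la^{j-1},\ga_j)\equiv 0$ (yielding the ``$\equiv$'' relations), the conditions $(\la^{j-1},\ga_j)\not\equiv 0$ rewritten modulo those congruences (yielding the ``$\not\equiv$'' relations), and the requirement $\la^i\in X^+(T)$ for $0\le i\le 5$. The four branches under $(\la,\ga_1)\not\equiv 0$ reproduce Conditions 1.1--1.4 and the four under $(\la,\ga_1)\equiv 0$ reproduce Conditions 2.1--2.4, which gives necessity. For sufficiency one argues exactly as before: under any listed condition, inspection shows $\la^i\in X^+(T)$ for all $0\le i\le 5$, so $L(\la)$ is locally finite over $\DGev$ by Lemma~\ref{lem:rational} and hence finite dimensional by Lemma~\ref{lem:lf=fd}.

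I expect the main obstacle to be organizational rather than conceptual. Since $d=3$, several branches do not reach a vanishing $\om_4$-coordinate until $\la^3$, so one must treat the fork $\Pi^3\to\Pi^4,\Pi^5$ separately and impose dominance on both $\la^4$ and $\la^5$; subtracting $\ga_3,\ga_4$ or $\ga_5$ then superficially introduces integrality inequalities such as $b\ge 1$, $c\ge 1$ or $a\ge 1$ (and, similarly, subtracting $\ga_2$ can threaten the $\om_3$-coordinate when $r_{\ga_1}$ was trivial). The delicate part is to show that each such inequality is automatically implied by the congruences and non-congruences already present — typically because, were the offending coordinate equal to $0$, the corresponding pairing would become $\equiv 0\pmod p$, making that reflection trivial, so that no subtraction and hence no negativity occurs. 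Keeping track of these cancellations, and of the simplification of the ``$\not\equiv$'' conditions after substituting the congruences, is where all the care is needed; the computations themselves are routine and exactly parallel to the $d=2$ case.
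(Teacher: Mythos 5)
Your proposal is correct and follows essentially the same route as the paper: the same case tree branching on $(\la^{j-1},\ga_j)\bmod p$ along $\Pi^0\to\cdots\to\Pi^3$ with the fork to $\Pi^4,\Pi^5$, the same use of Lemma~\ref{lem:zero} to freeze weights once the $\om_4$-coordinate vanishes, and the same sufficiency argument via Lemma~\ref{lem:rational}. Your pairing formulas $(\mu,\ga_j)$ check out against the paper's computations, and your observation that the dominance inequalities are forced by the non-congruences (since an offending zero coordinate would make the relevant pairing vanish mod $p$) is exactly how the paper disposes of them.
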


\begin{proof}
Assume $L(\la)$ is finite dimensional, for $\la=(a,b,c,3) \in X^+(T)$. We compute $(\la,\ga_1)= -{1\over 2}a-b-{3\over 4}c-{9\over 4}$, and divide into 2 cases (1)-(2).
\vspace{2mm}

(1)
Assume $-{1\over 2}a-b-{3\over 4}c-{9\over 4}\not\equiv 0\pmod p$. We have $\la^1=\la-\ga_1=(a,b,c+1,2)$. We compute $(\la^1,\ga_2) =-{1\over 2}a-b-{1\over 4}(c+1)-{3\over 2}$, and then divide into 2 cases (1a)-(1b).
\vspace{2mm}

(1a)
Assume $-{1\over 2}a-b-{1\over 4}(c+1)-{3\over 2}\not\equiv 0\pmod p$. Then
$\la^2=\la^1-\ga_2=(a,b+1,c,1)$. We compute $(\la^2,\ga_3)=-{1\over 2}a-{1\over 4}c-{3\over 4}$, and again divide into two subcases (1a-i)-(1a-ii):
\begin{itemize}
\item[(1a-i)]
 Assume $-{1\over 2}a-{1\over 4}c-{3\over 4}\not\equiv 0$. Then $\la^3=\la^2-\ga_3=(a+1,b,c+1,0)$.
 Hence Lemma~\ref{lem:zero} is applicable and gives us that $(\la^3,\ga_4)\equiv (\la^3,\ga_5) \equiv 0$. Combining with the computation of {$(\la^3,\ga_4)=-{1\over 2}(a+1)+{1\over 4}(c+1)$ and $(\la^3,\ga_5)={1\over 2}(a+1)-{1\over 4}(c+1)$}, this implies
 $c\equiv 2a+1$. The conditions on (1), (1a) and (1a-i) can be simplified to $a\not\equiv -{1}$, $b\not\equiv -a-{ 2}$ and $b\not\equiv {-2a}-{3}$, whence Condition ~1.1.
In this case, we have $\la^1=(a,b,c+1,2)$, $\la^2=(a,b+1,c,1)$, and $\la^5=\la^4=\la^3= (a+1,b,c+1,0)$.

\item[(1a-ii)]
Assume $-{1\over 2}a-{1\over 4}c-{3\over 4}\equiv 0$. Then $\la^3=\la^2=(a,b+1,c,1)$. The conditions on (1), (1a) and (1a-ii) can be simplified to $c\equiv -2a-{3}$, $b\not\equiv -1$ and $b\not\equiv a$, whence Condition~ 1.2.
%
In this case, we have $\la^1=(a,b,c+1, 2)$, $\la^3=\la^2=(a,b+1,c,1)$. If $a\not\equiv -{3\over 2}$, then $\la^4=\la^3-\ga_4=(a+1,b+1,c-1,0)$; otherwise, $\la^4=\la^3$. If $a\not\equiv 0$, then $\la^4=\la^3-\ga_5=(a-1,b+1,c+1,0)$; otherwise, $\la^5=\la^3$.
\end{itemize}
This completes Subcase (1a).
\vspace{2mm}

(1b)
Assume $-{1\over 2}a-b-{1\over 4}(c+1)-{3\over 2}\equiv 0\pmod p$. Then $\la^2=\la^1=(a,b,c+1,2)$. We compute $(\la^2,\ga_3)=-{1\over 2}a-{1\over 4}c-{7\over 4}$, and again divide into two subcases (1b-i)-(1b-ii):
\begin{itemize}
 \item[(1b-i)]
  Assume $-{1\over 2}a-{1\over 4}c-{7\over 4}\not\equiv 0$. We compute  $\la^3=\la^2-\ga_3=(a+1,b-1,c+2,1)$. The conditions on (1), (1b) and (1b-i) become $b\equiv -{1\over 2}a-{1\over 4}c-{7\over 4}$, $b\not\equiv 0$, and $c\not\equiv -1$, whence Condition~ 1.3.
In this case, we have $\la^2=\la^1=(a,b,c+1,2)$, and $\la^3=(a+1,b-1,c+2,1)$. Moreover,
if $c\not\equiv 2a+3$, then $\la^4=\la^3-\ga_4=(a+2,b-1,c+1,0)$; otherwise $\la^4=\la^3$. If $c\not\equiv 2a-3$, then $\la^5=\la^3-\ga_5=(a,b-1,c+3,0)$; otherwise $\la^5=\la^3$.

\item[(1b-ii)]
Assume $-{1\over 2}a-{1\over 4}c-{7\over 4}\equiv 0$. Then $\la^3=\la^2=(a,b,c+1,2)$.
The conditions on (1), (1b) and (1b-ii) become $a\not\equiv -3$, $b\equiv 0$ and $c\equiv -2a-7$, whence Condition~ 1.4.
In this case, we have $\la^3=\la^2=\la^1=(a,b,c+1,2)$. Moreover,
If $a\not\equiv -3$, then $\la^4=\la^3-\ga_4=(a+1,b,c,1)$; otherwise $\la^4=\la^3$.
If $a\not\equiv 0$, then $\la^5=\la^3-\ga_5=(a-1,b,c+2,1)$; otherwise $\la^5=\la^3$.
\end{itemize}
This completes Subcase (1b) and then Case (1).
\vspace{2mm}

(2)
Assume  $-{1\over 2}a-b-{3\over 4}c-{9\over 4}\equiv 0$. Then $\la^1=\la=(a,b,c,3)$. We compute $(\la^1,\ga_2)=-{1\over 2}a-b-{1\over 4}c-{9\over 4}$, and divide into 2 subcases (2a)-(2b).

\vspace{2mm}
(2a)
Assume $-{1\over 2}a-b-{1\over 4}c-{9\over 4}\not\equiv 0$.  Then $\la^2=\la^1-\ga_2=(a,b+1,c-1,2)$. We compute $(\la^2,\ga_3)=-{1\over 2}a-{1\over 4}c-{5\over 4}$, and again divide into 2 subcases (2a-i)-(2a-ii):
\begin{itemize}
  \item[(2a-i)]
Assume $-{1\over 2}a-{1\over 4}c-{5\over 4}\not\equiv 0$. Then the conditions on (2), (2a) and (2a-i) become $b\equiv -{1\over 2}a-{3\over 4}c-{9\over 4}$, $c\not\equiv 0$ and $c\not\equiv -2a-5$, whence Condition 2.1.
In this case, we have $\la^1=\la$, $\la^2=(a,,b+1,c-1,2)$, $\la^3=(a+1,b,c,1)$. If $c\not\equiv 2a+5$, then $\la^4= (a+2,b,c-1,0)$; otherwise $\la^4=\la^3$. If $c\not\equiv 2a-1$, then $\la^5= (a,b,c+1,0)$; otherwise $\la^5=\la^3$.

\item[(2a-ii)]
Assume $-{1\over 2}a-{1\over 4}c-{5\over 4}\equiv 0$. Then  the conditions on (2), (2a) and (2a-ii) become $c\equiv -2a-5$, $b\equiv a+{3\over 2}$,  and   $c\not\equiv 0$, whence Condition~ 2.2.
In this case, we have $\la^1=\la$, $\la^3=\la^2=(a,b+1,c-1,2)$. If $a\not\equiv -3$, then $\la^4=(a+1,b+1,c-2,1)$ (and $c\ge 2$ is guaranteed by Condition~2.2); otherwise, $\la^4=\la^3$. If $a\not\equiv 0$, then $\la^5= (a-1,b+1,c,1)$; otherwise, $\la^5=\la^3$.
\end{itemize}
This completes Subcase (2a).

\vskip5pt
(2b)
Assume $-{1\over 2}a-b-{1\over 4}c-{9\over 4}\equiv 0$. Then 
$\la^2=\la^1=\la$. We compute $(\la^2,\ga_3)=-{1\over 2}a-{1\over 4}c-{9\over 4}$, and divide into 2 subcases (2b-i)-(2b-ii):
\begin{itemize}
\item[(2b-i)]
Assume $-{1\over 2}a-{1\over 4}c-{9\over 4}\not\equiv 0$. 
Then $\la^3=\la^2-\ga_3=(a+1,b-1,c+1,2)$.  The conditions on (2), (2b) and (2b-i) become  $c\equiv 0$,  $b\equiv-{1\over 2}a-{9\over 4}$, and $b\not\equiv 0$, whence Condition~ 2.3.
In this case, we have $\la^2=\la^1=\la$, and $\la^3=(a+1,b-1,c+1,2)$. If $a\not\equiv -{7\over 2}$, then $\la^4= (a+2,b-1,c,1)$; otherwise, $\la^4=\la^3$. If $a\not\equiv {5\over 2}$, then $\la^5= (a,b-1,c+2,1)$; otherwise, $\la^5=\la^3$.

\item[(2b-ii)]
Assume $-{1\over 2}a-{1\over 4}c-{9\over 4}\equiv 0$.  The conditions on (2), (2b) and (2b-ii) become $b\equiv 0$,  $c\equiv 0$ and $a\equiv -{9\over 2}$, whence Condition~ 2.4.
In this case, we have $\la^i=\la$ for $1\le i\le 4$ and $\la^5=\la^3-\ga_5=(a-1,b,c+1,2)$.
\end{itemize}
This completes  Case (2). Therefore, we have established  the necessary conditions as listed in the proposition for $L(\la)$ to be finite dimensional.

By inspection, we see that $\la^i\in X^+(T)$ for all $i$ in every case above. By Lemma~\ref{lem:rational}, the conditions listed in the proposition are also sufficient for $L(\la)$ to be finite dimensional.
\end{proof}


Now we simplify the conditions in Proposition~\ref{prop:d=3messy} by removing all inequalities.

\begin{prop}
  \label{prop:d=3simple}
Set $d=3$. Assume $\la =a\omega_1 + b\omega_2 +c\omega_3 +d\omega_4 \in X^+(T)$.
Then $L(\la)$ is finite dimensional if only if one of the following conditions (i)--(v) holds:
\begin{enumerate}
\item[(i)]
     $2a-c +1 \equiv 0\pmod p$;
\item[(ii)]
      $2a+c+3\equiv 0\pmod p$;
\item[(iii)]
     $2a+4b+c+7 \equiv 0\pmod p$;
\item[(iv)]
     $2a+c+7 \equiv b\equiv 0\pmod p$;
\item[(v)]
  $2a +4b +3c +9\equiv 0\pmod p$.
\end{enumerate}
\end{prop}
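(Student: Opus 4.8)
The plan is to reconcile the finer, inequality-laden conditions of Proposition~\ref{prop:d=3messy} with the cleaner conditions (i)--(v) stated here, exactly as was done for $d=2$ in the passage from Proposition~\ref{prop:d=2messy} to Proposition~\ref{prop:d=2simple}. First I would observe that each of the eight conditions 1.1--1.4 and 2.1--2.4 in Proposition~\ref{prop:d=3messy}, after discarding the side inequalities and rewriting the congruences in terms of $2a,c,b$, implies one of (i)--(v): conditions 1.1 and 1.2 give $c\equiv 2a+1$ and $c\equiv -2a-3$, i.e.\ (i) and (ii); condition 1.3 rewrites as $2a+4b+c+7\equiv 0$, i.e.\ (iii); condition 1.4 gives $b\equiv 0$ together with $c\equiv -2a-7$, i.e.\ (iv); conditions 2.1 and 2.3 both rewrite as multiples of $2a+4b+3c+9\equiv 0$ (using $c\equiv 0$ in the 2.3 case to see $-\tfrac12 a-\tfrac94\equiv -\tfrac12 a-\tfrac34 c-\tfrac94$), i.e.\ (v); condition 2.2 gives $c\equiv -2a-5$ and $b\equiv a+\tfrac32$, whence $2a+4b+3c+9 = 2a+4(a+\tfrac32)+3(-2a-5)+9\equiv 0$, i.e.\ (v) again; and condition 2.4 gives $b\equiv c\equiv 0$ and $a\equiv -\tfrac92$, so $2a+c+7\equiv 0$ and $b\equiv 0$, i.e.\ (iv). This establishes that the list (i)--(v) is no stronger than Proposition~\ref{prop:d=3messy}; combined with Lemma~\ref{lem:rational} this shows sufficiency.

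For necessity I must show conversely that whenever one of (i)--(v) holds, one of the eight detailed conditions of Proposition~\ref{prop:d=3messy} is satisfied. The strategy is a case-by-case ``if (i)--(v) holds but the obviously-matching detailed condition fails, then a different detailed condition holds'' argument, precisely in the style of the last six paragraphs of the proof of Proposition~\ref{prop:d=2simple}. For each of (i)--(v) I would identify the natural target among 1.1--2.4 (respectively 1.1, 1.2, 1.3, 1.4, and 2.1/2.2/2.3 for (i)--(v)), assume that target fails, and solve the resulting congruences: failure of a target forces one of its side inequalities $b\not\equiv \ast$, $c\not\equiv\ast$, $a\not\equiv \ast$ to become an equality, which pins down $a,b,c$ modulo $p$ tightly enough that one can read off another detailed condition. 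For instance, if (iii) holds but 1.3 fails, then either $b\equiv 0$ — in which case $2a+c+7\equiv 0$ so (iv) is in force and one checks 1.4 holds — or $c\equiv -1$, in which case $2a+4b+6\equiv 0$, i.e.\ $2a+4b+3c+9\equiv 0$, so (v) holds and one verifies 2.1 or 2.3; the boundary subcases $b\equiv 0$ and $c\equiv -1$ simultaneously land in 1.4 or 2.4. A completely analogous chase handles (i) vs.\ 1.1, (ii) vs.\ 1.2, (iv) vs.\ 1.4 (failure forcing $a\equiv -3$, hence $a\equiv -\tfrac92$ after using $2a+c+7\equiv c\equiv 0$, giving 2.4), and (v) vs.\ 2.1/2.2/2.3 (failure of all three forcing $c\equiv 0$ and $c\equiv -2a-5$ and $b\equiv 0$, collapsing to 2.4).

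The main obstacle I anticipate is the bookkeeping in the (v) case: condition (v) is the image of three distinct detailed conditions (2.1, 2.2, 2.3) plus the degenerate 2.4, so showing that every solution of $2a+4b+3c+9\equiv 0$ falls into at least one of them requires carefully partitioning on the auxiliary data $c\equiv 0$ vs.\ $c\not\equiv 0$ and $c\equiv -2a-5$ vs.\ not, and checking that the excluded ``inequality'' cases in 2.1--2.3 are exactly the overlaps that get absorbed by a neighboring condition or by 2.4. A secondary subtlety is that conditions (iv) and (v) are not mutually exclusive (when $b\equiv 0$, $c\equiv 0$, $a\equiv -\tfrac92$ both hold simultaneously), and the statement only claims a disjunctive characterization, so I must be careful never to claim exclusivity — only that the union of (i)--(v) equals the union of 1.1--2.4. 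Once the two inclusions are in hand, the proposition follows, with sufficiency supplied by the inspection (already recorded in the proof of Proposition~\ref{prop:d=3messy}) that all $\la^i$ lie in $X^+(T)$ together with Lemma~\ref{lem:rational}, and necessity supplied by Proposition~\ref{prop:d=3messy} itself.
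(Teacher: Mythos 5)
Your overall strategy coincides with the paper's: prove the two inclusions between the union of conditions (1.1)--(2.4) of Proposition~\ref{prop:d=3messy} and the union of (i)--(v), the easy direction by dropping the side inequalities and the harder direction by a case chase of the form ``if (x) holds but its natural target fails, then another detailed condition holds,'' with sufficiency delegated to Proposition~\ref{prop:d=3messy} and Lemma~\ref{lem:rational}. However, several of your concrete case resolutions are arithmetically wrong and would not survive being written out. First, condition (2.4) reads $a\equiv -\tfrac92$, $b\equiv c\equiv 0$, which gives $2a+c+7\equiv -2\not\equiv 0$ for $p>2$, so (2.4) does \emph{not} imply (iv); it implies (v), since $2a+4b+3c+9\equiv -9+9\equiv 0$. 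Second, in your treatment of ``(iv) holds but (1.4) fails'': failure of (1.4) forces $a\equiv -3$ and hence $c\equiv -2a-7\equiv -1$, which is incompatible with $a\equiv -\tfrac92$ for $p>3$ and with $c\equiv 0$ (note (iv) contains no condition on $c$ alone); this case does not land in (2.4) but instead satisfies (v) (indeed $2a+4b+3c+9\equiv -6+0-3+9\equiv 0$) and thence, via the (v)-analysis, lands in (2.1). Third, in ``(iii) holds but (1.3) fails'' with $b\equiv 0$, condition (1.4) holds only if additionally $a\not\equiv -3$; the paper sidesteps this by splitting on $c\equiv -1$ first, so that in the remaining subcase $c\equiv -2a-7\not\equiv -1$ automatically gives $a\not\equiv -3$, whereas your boundary subcase $a\equiv -3$, $b\equiv 0$, $c\equiv -1$ lands in (2.1), not in (1.4) or (2.4).

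A smaller issue of the same kind occurs in your (v) analysis: the conjunction ``$c\equiv 0$ and $c\equiv -2a-5$ and $b\equiv 0$'' is actually inconsistent with (v) (it forces $a\equiv -\tfrac52$ and then $b\equiv -1$), so it cannot be what characterizes the (2.4) case; the correct collapse is $c\equiv 0$, $b\equiv 0$, and then $a\equiv -\tfrac92$ from (v). All of these defects are repairable within your framework --- the paper's proof is exactly the corrected version of your chase --- but as written the necessity argument does not close up.
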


\begin{proof}
One first observes that all conditions listed in Proposition~\ref{prop:d=3messy} are part of conditions listed above in this proposition. Indeed the conditions above are basically obtained by removing the inequalities in the conditions in Proposition~\ref{prop:d=3messy}, and the case (2.4) with equalities only is included in (v).

It remains to show that all conditions above in this proposition are included in the list of conditions (1.1)--(1.4) and (2.1)--(2.4) in Proposition~\ref{prop:d=3messy}.

We first check that the 4 subcases (2.1)--(2.4) of Proposition~\ref{prop:d=3messy} are equivalent to Condition ~(v).
If Condition (v) is satisfied but (2.1) of Proposition~\ref{prop:d=3messy} is not, then  we have 2 cases (A)-(B):
\begin{itemize}
\item[(A)]  $c\not \equiv 0$ and
$c\equiv -2a-5$, in which case $b\equiv a+{3\over 2}$,  $a\not\equiv -{5\over 2}$, and so (2.2) is satisfied;
\item[(B)] $c\equiv 0$. Then $b\equiv-{1\over 2}a-{9\over 4}$. We further divide into 2 subcases:
 \begin{itemize}
  \item[(B1)] $b\not\equiv 0$, in which case (2.3) is satisfied,
  \item[(B2)] $b\equiv 0$, in which case $a\equiv -{9\over 2}$, and so (2.4) is satisfied.
 \end{itemize}
\end{itemize}

 If Condition (i) is satisfied but (1.1) of Proposition~\ref{prop:d=3messy} is not, then we have the following 3 cases (A)-(B)-(C):
\begin{itemize}
\item[(A)] $b \equiv -2a-3$, in which case $c \equiv 2a+1$, and so (v) is satisfied;
\item[(B)] $a\equiv -1$ and $b \not\equiv -2a-3$, in which case $c\equiv -1$ but $b \not \equiv -1$, and so (1.2) is satisfied;
\item[(C)] $b\equiv -a -2$ and $a\not \equiv -1$. Hence $c\not\equiv -1$ thanks to $c \equiv 2a+1$. We further divide into 2 subcases below:
 \begin{itemize}
  \item[(C1)] $b\not\equiv 0$, in which case $c\equiv 2a+1$, and so (1.3) is satisfied,
  \item[(C2)] $b\equiv 0$, in which case $a\equiv -2, c\equiv -3$, and so (1.4) is satisfied.
  \end{itemize}
\end{itemize}

 If Condition (ii) is satisfied but (1.2) of Proposition~\ref{prop:d=3messy} is not, then either
(A)  $b\equiv a$, in which case $c\equiv -2a-3$, and so (v) is satisfied,
or (B) $b\equiv -1$ and $b\not \equiv a$, in which case $c\equiv -2a-3$ and then $c\not\equiv -1$, and  so (1.3) is satisfied.

 If Condition (iii) is satisfied but (1.3) of Proposition~\ref{prop:d=3messy} is not, then either (A) $c \equiv -1$, in which case the equality $b\equiv -{1\over 2}a-{1\over 4}c-{7\over 4}$ implies that (v) is satisfied; or (B) $c\not \equiv -1$ and $b\equiv 0$, in which case $a\not \equiv -3$, and so (1.4) is satisfied.

 If Condition (iv) is satisfied but (1.4) of Proposition~\ref{prop:d=3messy} is not, then $a\equiv -3$, in which case $b\equiv 0, c\equiv -1$, and so (v) is satisfied.

 The proposition is proved.
\end{proof}

\subsection{Simple modules of the supergroup $F(3|1)$}

Summarizing Propositions~\ref{prop F4 First}, \ref{prop:d=1simple}, \ref{prop:d=2simple}, and \ref{prop:d=3simple}, we have established the following classification of simple modules for type $F(3|1)$.

\begin{thm}  \label{thm:F4}
Let $p>3$. Let $G$ be the simply connected supergroup of type $F(3|1)$. A complete list of inequivalent simple $G$-modules consists of $L(\la)$, where
$\la = a \om_1 +b \omega_2 +c \omega_3 +d \frac{\delta}{2}$, with $a,b,c,d \in \N$, such that
one of the following conditions is satisfied:
\begin{enumerate}
\item $d=0$, and $a\equiv b\equiv c\equiv 0 \pmod p$.

\item
$d=1$, and $a,b,c$ satisfy either of (i)-(iii) below:
\begin{itemize}
\item[(i)] $a\equiv  2b +3\equiv c-1 \equiv 0\pmod p$;
\item[(ii)] $2a +1\equiv 2b+1 \equiv c\equiv 0\pmod p$;
    \item[(iii)]  $2a+3\equiv b\equiv c\equiv 0\pmod p$.
\end{itemize}

\item
$d=2$, and $a,b,c$ satisfy either of (i)-(vi) below:
\begin{enumerate}
\item[(i)] $a\equiv c\equiv 0\pmod p$;

\item[(ii)]  $2a-c \equiv a+b+1\equiv 0\pmod p$;

 \item[(iii)]  $b\equiv 2a+c+4\equiv 0\pmod p$;
\item[(iv)]  $2a-c+2 \equiv 2a+ b +3 \equiv 0\pmod p$ and $c\geq 2$;

\item[(v)]   $2a+c+2\equiv a-b\equiv 0\pmod p$ and  $a\geq 1$;

\item[(vi)]  $a+2b+3 \equiv c\equiv 0\pmod p$.
\end{enumerate}

\item
$d=3$, and $a,b,c$ satisfy either of (i)-(v) below:
\begin{enumerate}
\item[(i)]
     $2a-c +1 \equiv 0\pmod p$;
\item[(ii)]
      $2a+c+3\equiv 0\pmod p$;
\item[(iii)]
     $2a+4b+c+7 \equiv 0\pmod p$;
\item[(iv)]
     $2a+c+7 \equiv b\equiv 0\pmod p$;
\item[(v)]
  $2a +4b +3c +9\equiv 0\pmod p$.
\end{enumerate}

\item $d\ge 4$, (and $a,b,c\in \N$ are arbitrary).
\end{enumerate}
\end{thm}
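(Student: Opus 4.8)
The plan is to reduce, via Proposition~\ref{prop:equiv} and Lemma~\ref{lem:simple}, the classification of simple $G$-modules to the determination of the set $X^\dagger(T)$ of those $\la \in X^+(T)$ for which $L(\la)$ is finite dimensional, and then to prove the theorem by splitting into cases according to the value of the coefficient $d$ of $\omega_4=\delta/2$ in $\la = a\omega_1+b\omega_2+c\omega_3+d\omega_4$. The five ranges $d\ge 4$, $d=0$, $d=1$, $d=2$, $d=3$ correspond exactly to the five items of the theorem, and each is already settled: the case $d\ge 4$ by Proposition~\ref{prop F4 First}(1) (which in turn rests on the regularity of $\la+\rho$ and the Euler characteristic criterion, Proposition~\ref{prop:EulerFinite}); the case $d=0$ by Proposition~\ref{prop F4 First}(2); the case $d=1$ by Proposition~\ref{prop:d=1simple}; the case $d=2$ by Proposition~\ref{prop:d=2simple}; and the case $d=3$ by Proposition~\ref{prop:d=3simple}. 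So the proof of the theorem is essentially an assembly of these results, together with the elementary observation that the five families of weights are pairwise disjoint, so that no $L(\la)$ is listed twice, and a final appeal to Proposition~\ref{prop:equiv} and Lemma~\ref{lem:simple} to pass back from $\DG$-modules to $G$-modules.

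The mechanism underlying each of the constituent propositions is the finite-dimensionality criterion furnished by Lemmas~\ref{lem:rational} and \ref{lem:lf=fd}: $L(\la)$ is finite dimensional precisely when, for every Borel $\bbb'$ obtained from $\bbb$ by a sequence of odd reflections, the $\bbb'$-highest weight $\la'$ of $L(\la)$ remains $G_\oo$-dominant integral. For $F(3|1)$ there are six conjugacy classes of such Borels, realized by $\bbb^0,\dots,\bbb^5$ with simple systems \eqref{rep of simple roots}, reached through the chain of odd reflections $r_{\ga_1},\dots,r_{\ga_5}$; using Lemma~\ref{lem:oddref} and the inner products \eqref{bracet for fundmental weight} one computes the successive highest weights $\la^0=\la, \la^1, \dots, \la^5$, each odd reflection branching according to whether $(\la^i,\ga_{i+1})\equiv 0 \pmod p$. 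This is what the proofs of Propositions~\ref{prop:d=1simple}, \ref{prop:d=2messy}, \ref{prop:d=3messy} carry out, with Lemma~\ref{lem:zero} serving to terminate a branch: once some $\la^i$ has vanishing $\omega_4$-coefficient, all remaining brackets with the relevant odd roots must be $\equiv 0$, which pins down $a,b,c$ modulo $p$.

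The main obstacle is the internal cases $d=2$ and $d=3$, where $\la+\rho$ need not be regular and the Euler character shortcut is unavailable. Here the $\omega_4$-coefficient of the $\la^i$ can descend from $d$ down to $1$ or $0$ but never lower, producing a tree of subcases at each of the five odd reflections; the bookkeeping in Propositions~\ref{prop:d=2messy} and \ref{prop:d=3messy} is lengthy but mechanical, and the genuinely delicate step is the final simplification, namely the verification in Propositions~\ref{prop:d=2simple} and \ref{prop:d=3simple} that deleting the auxiliary inequalities ($c\ge 1$, $c\ge 2$, $a\ge 1$, and the like) does not enlarge the solution set: one checks that any weight satisfying a relaxed congruence while violating the dropped inequality must in fact satisfy one of the other listed conditions. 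Granting all this, the theorem follows by collating the five cases.
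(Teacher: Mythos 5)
Your proposal is correct and follows essentially the same route as the paper: the theorem is obtained by assembling Propositions~\ref{prop F4 First}, \ref{prop:d=1simple}, \ref{prop:d=2simple}, and \ref{prop:d=3simple} according to the value of $d$, with the underlying mechanism (odd reflections via Lemma~\ref{lem:oddref}, the local-finiteness criterion of Lemmas~\ref{lem:rational} and \ref{lem:lf=fd}, the termination device of Lemma~\ref{lem:zero}, and the Euler-characteristic shortcut for $d\ge 4$) exactly as in the text. Your additional remarks on the delicacy of removing the auxiliary inequalities in Propositions~\ref{prop:d=2simple} and \ref{prop:d=3simple} accurately reflect where the real work in the paper lies.
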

We do no attempt the classification of simple $G$-modules for $p=3$ in this paper, and leave it to the reader.

\begin{rem}
Theorem~\ref{thm:F4} makes sense over $\C$, providing an odd reflection approach to the classification of finite-dimensional simple modules over $\C$ (due to \cite{Kac77}; also cf. \cite{Ma14}). 
Indeed this classification can be read off from Theorem~\ref{thm:F4} (by regarding $p=\infty$) as follows.
{\it The simple $\g$-modules $L(\la)$ over $\C$ are finite dimensional if and only if $\la= a \om_1 +b \omega_2 +c \omega_3 +d \frac{\delta}{2}$, for $a,b,c,d \in \N$, satisfies one of the 3 conditions: (1) $a=b=c=d=0$; (2) $d=2$ and $a=c=0$; (3) $d\ge 4$. }
\end{rem}


\end{document}